\newcommand{\beq}{\begin{equation}}
\newcommand{\eeq}{\end{equation}}
\newtheorem{theorem}{Theorem}[section]
\newtheorem{lemma}[theorem]{Lemma}
\newtheorem{corollary}[theorem]{Corollary}
\newtheorem{proposition}[theorem]{Proposition}
\newtheorem{problem}[theorem]{Skorokhod Reflection Problem}
\newtheorem{definition}[theorem]{Definition}
\newtheorem{remark}[theorem]{Remark}
\newtheorem{Assumptions}[theorem]{Assumption}
\newcommand{\msc}[1]{\textbf{MSC2010 Classification:} #1.}
\newcommand{\jel}[1]{\textbf{JEL Classification:} #1.}
\newcommand{\ackn}[1]{\textbf{Acknowledgments:} #1.}
\begin{document}
\begin{frontmatter}
\title{\textbf{A Stochastic Partially Reversible Investment Problem \\on a Finite Time-Horizon: Free-Boundary Analysis}}
%\tnotetext[t1]}

\author[man]{Tiziano De Angelis}\ead{tiziano.deangelis@manchester.ac.uk}

\author[bil]{Giorgio Ferrari}\ead{giorgio.ferrari@uni-bielefeld.de}
%\cortext[cor]{Corresponding author.}
\address[man]{School of Mathematics, University of Manchester, Oxford Road, M13 9PL Manchester, UK}
\address[bil]{Center for Mathematical Economics, Bielefeld University, Universit\"atsstrasse 25, D-33615 Bielefeld, Germany}
%\date

\begin{abstract}
We study a continuous-time, finite horizon, stochastic partially reversible investment problem for a firm producing a single good in a market with frictions. The production capacity is modeled as a one-dimensional, time-homogeneous, linear diffusion controlled by a bounded variation process which represents the cumulative investment-disinvestment strategy. We associate to the investment-disinvestment problem a zero-sum optimal stopping game and characterize its value function through a free-boundary problem with two moving boundaries. These are continuous, bounded and monotone curves that solve a system of non-linear integral equations of Volterra type. The optimal investment-disinvestment strategy is then shown to be a diffusion reflected at the two boundaries.
\end{abstract}

%\maketitle
%%%%%%%%%%%%%%%%%%%%%%%%%%%%%%%%%%%%%%%%%%%%%%%%%%%%%%%%%%%%%%%%%%%%%%%%%%%%%%%%%%%%%%%%%%%%%
\begin{keyword}
partially reversible investment, singular stochastic control, zero-sum optimal stopping games, free-boundary problems, Skorokhod reflection problem
\vspace{+8pt}

\msc{93E20, 60G40, 35R35, 91A15, 91B70}
\vspace{+4pt}

\jel{C02, C73, E22, D92}
\end{keyword}

%%%%%%%%%%%%%%%%%%%%%%%%%%%%%%%%%%%%%%%%%%%%%%%%%%%%%%%%%%%%%%%%%%%%%%%%%%%%%%%%%%%%%%%%%%%%%
\end{frontmatter}

\section{Introduction}

A firm represents the productive sector of a stochastic economy over a finite time-horizon and it adjusts its production capacity $C$ making repeatedly investments and disinvestments of arbitrary size (we do not require the investment-disinvestment rates to be defined) at given proportional costs. Since we consider a market with frictions the firm buys and sells capacity at different fixed prices and it aims at maximizing its total net expected profit. In mathematical terms, following for instance \cite{PhamGuo}, this amounts to solving the bounded variation control problem with finite horizon
\begin{align}
\label{netprofitintro}
\hspace{-6pt}\sup_{(\nu_{+},\nu_{-})}\hspace{-2pt}\mathbb{E}\bigg\{\hspace{-2pt}\int_0^{T}\hspace{-6pt} e^{- \mu_F t}R(C^{y,\nu}(t))dt \hspace{-1pt}-\hspace{-1pt} c_{+}\hspace{-3pt}\int_{0}^T \hspace{-6pt}e^{-\mu_F t} d\nu_{+}(t) \hspace{-1pt}+ \hspace{-1pt}c_{-}\hspace{-3pt}\int_{0}^T \hspace{-6pt}e^{-\mu_F t} d\nu_{-}(t) \hspace{-1pt}+\hspace{-1pt} e^{-\mu_F T}G(C^{y,\nu}(T)) \bigg\}
\end{align}
where the optimization is taken over all the nondecreasing processes $\nu_{+}$ and $\nu_{-}$ representing the (cumulative) investment and disinvestment strategy, respectively.
Here $\mu_F$ is the firm's manager discount factor, $c_{+}$ is the instantaneous cost of investment, $c_{-}$ is the benefit from disinvestment, $R$ the operating profit function and $G$ a terminal gain, often referred to as a \textsl{scrap function}. We assume that the production capacity $C^{y,\nu}$ follows a stochastic, time-homogeneous, linearly controlled dynamics with $\nu:=\nu_{+} - \nu_{-}$ (cf.\ (\ref{capacity}) below).

The main goals of this papers are two. Firstly, we prove an abstract existence and uniqueness result for the optimal solution pair $(\nu^{*}_{+}, \nu^{*}_{-})$ to problem (\ref{netprofitintro}). Secondly, we provide a semi-explicit representation of such pair in terms of two continuous, bounded and monotone free-boundaries arising in a Zero-Sum Optimal Stopping Game (ZSOSG) associated to the control problem. These boundaries are characterized through a system of non-linear integral equations of Volterra type. To the best of our knowledge this is a complete novelty in the literature on bounded variation control problems with finite horizon. Moreover, we would like to remark that, differently to standard optimal stopping problems, a probabilistic analysis of time-dependent free-boundaries in ZSOSG has not received significant attention so far. A somehow related paper is the very recent work by Yam, Yung and Zhou \cite{YYZ12} dealing with a delta-penalty game call option on a stock with a dividend payment. In that paper the optimal stopping region of a ZSOSG is analyzed in both infinite and finite time-horizon cases; Authors find two optimal boundaries that uniquely solve a couple of non-linear integral equations. However, the aims of their analysis and the setting of their problem are substantially different to ours since, e.g., they do not deal with any control problem and (from a more technical point of view) they have no (unbounded) running profit. The analysis we perform on the ZSOSG builds upon the existing probabilistic theory of optimal stopping and goes beyond that extending a number of results and developing new methodologies.

Theory of investment under uncertainty has received increasing attention in the last years in Economics as well as in Mathematics (we refer for instance to Dixit and Pindyck \cite{DixitPindyck} for a review).
In \cite{Bertola} and \cite{Pindyck} a firm maximizes profits over an infinite time-horizon when the operating profit function is Cobb-Douglas and depends on an exogenous stochastic shock modeled by a geometric Brownian motion. In \cite{AbelEberly} and \cite{BentolilaBertola} the Authors consider the optimal investment problem under uncertainty of a firm that produces a single good with costly reversibility. The problem is formulated over an infinite time-horizon with constant returns to scale Cobb-Dougals production facing an isoelastic demand curve. In \cite{AbelEberly} the optimal investment-disinvestment policy is characterized in terms of a generalized concept of user cost of capital %(i.e., the opportunity cost of holding one unit of capital for a period)
introduced by Jorgenson \cite{Jorgenson}. We recall that irreversible investment decisions and their timing are also related to real options as pointed out by \cite{McDonaldSiegel} and \cite{Pindyck} among others.

In a more mathematical environment several Authors studied the firm's optimal problem of capacity irreversible expansion via a number of different approaches. These include dynamic programming techniques (see \cite{Kobila}, \cite{AOksendal} and \cite{Pham}, among others), stochastic first-order conditions and the Bank-El Karoui's Representation Theorem \cite{BankElKaroui} (see, e.g., \cite{Bank}, \cite{CFR}, \cite{freeandbase} and \cite{RiedelSu}), connections with optimal switching problems (cf.\ \cite{GuoTomecek}, among others).
Models involving both expansion and reduction of a project's capacity level (i.e., partially reversible investment problems) have been recently considered by \cite{PhamGuo} and \cite{MehriZervos}, among others. In \cite{MehriZervos}, for example, an infinite time-horizon problem of determining the optimal investment-disinvestment strategy that a firm should adopt in the presence of random price and/or demand fluctuations is considered.
On the other hand, in \cite{PhamGuo} the Authors address a one-dimensional, infinite time-horizon partially reversible investment model with entry decisions and a general running payoff function. They study the problem via a dynamic programming approach and characterize the optimal policy as a diffusion reflected along two (constant in time) boundaries. Here we consider the model of \cite{PhamGuo} without entry decision but with a finite time-horizon.

Our optimization problem (\ref{netprofitintro}) falls within the class of \textsl{bounded variation follower problems with finite horizon}. These are singular stochastic control problems as controls are singular (as functions of time) with respect to the Lebesgue measure.
The link existing between singular stochastic control, optimal stopping and free-boundary problems has been thoroughly studied in a number of papers. The case of monotone controls (monotone follower) was considered for instance in \cite{KaratzasBaldursson}, \cite{BoetiusKohlmann}, \cite{Chiarolla2} and \cite{KaratzasShreve84}.
Recently, bounded variation control problems were brought into contact with optimal stopping games in a similar way (cf., for instance, \cite{Boetius} and \cite{KaratzasWang}). In this setting one has $V_y=v$, where $V_y$ is the derivative of the value function $V$ of the control problem along the direction of the control variable and $v$ is the saddle point of a Dynkin game, i.e.\ of a ZSOSG.

Stochastic games with stopping times have been studied through probabilistic and analytical methods. The former include martingale methods (see for instance \cite{Alario}, \cite{Dynkin}, \cite{Morimoto} and \cite{Pes09}), Markovian structures (see \cite{PeskirEkstrom} and \cite{Stettner82}, among others) and connections with stochastic backward differential equations (see for example \cite{CvitanicKaratzas} and \cite{HamadeneLepeltier95}). In Markovian settings, methods from partial differential equations (PDE), variational inequalities and free-boundary problems were largely employed (see the monographies by Bensoussan and Lions \cite{BensoussanLions} and Friedman \cite{Friedman} and references therein for an overview).

In this paper we use the link between bounded variation follower problems and zero-sum optimal stopping games to study problem (\ref{netprofitintro}). That is, we study the ZSOSG (Dynkin game) with value function $v$ (see \eqref{defv} and \eqref{Jey0} below) which is naturally associated to (\ref{netprofitintro}). Our analysis is carried out in several steps by means of arguments borrowed from probability and PDE theory. %Setting $G(C)=G_0+\frac{c_{-}}{f_C}C$ in (\ref{netprofitintro}), with $G_0\ge0$, we have $G_c(C)=\frac{c_{-}}{f_C}$ in
%(\ref{defvintro})
%and we meet usual conditions in the literature on variational methods applied to stochastic games %(cf., e.g., \cite{Friedman}, Volume 2, Chapter 16, Section 9).

We show that $v$ is a bounded, continuous function on $[0,T] \times (0,\infty)$ and that the state space $(t,y) \in [0,T] \times (0,\infty)$ splits into three regions defined via two continuous, bounded and monotone free-boundaries $\hat{y}_{+}$ and $\hat{y}_{-}$. The triple $(v, \hat{y}_{+}, \hat{y}_{-})$ solves a free-boundary problem on $[0,T] \times (0,\infty)$ and $v$ fulfills the so-called \textsl{smooth-fit property} along the free-boundaries (cf., e.g., \cite{PeskShir}). We use local time-space calculus (cf.\ \cite{Peskir}) to obtain a representation formula for $v$ in terms of the couple $(\hat{y}_{+}, \hat{y}_{-})$ and we show that $(\hat{y}_{+}, \hat{y}_{-})$ uniquely solves a system of non-linear integral equations of Volterra type (see Theorem \ref{Volterra} and Theorem \ref{esistenzaVolterra} below). A numerical solution to such system of equations is evaluated and illustrated in Figure \ref{figura}.

The optimal control $\nu^{*}:=\nu^{*}_{+}-\nu^{*}_{-}$ for problem (\ref{netprofitintro}) turns out to be the minimal effort needed to keep the optimally controlled diffusion inside the closure of the region between the two free-boundaries. Indeed, an application of results in \cite{Burdzy} allows us to prove that the optimally controlled capacity $C^{y,\nu^{*}}$ uniquely solves a Skorokhod reflection problem in the time-dependent interval $[\hat{y}_{+}(t), \hat{y}_{-}(t)]$, $t < T$. Finally, we obtain a semi--explicit expression of the optimal control $\nu^{*}$.

%\textcolor{blue}{[DIREI DI NON METTERLO QUI. L'HO ANCHE CAMBIATO UN PO'.] The detailed analysis of the time-dependent free-boundaries, and consequently, of the optimal investment-disinvestment policy of problem (\ref{netprofitintro}) that we perform in the paper, crucially relies on the one-dimensional structure of our controlled process. As already discussed, in economic applications it is usual to have an operating profit function depending on a stochastic factor (see, e.g., the infinite-time horizon models in \cite{AbelEberly}, \cite{Bertola}, \cite{DixitPindyck}, \cite{FedericoPham}, \cite{MehriZervos}, \cite{Pindyck} and \cite{RiedelSu}) which can be regarded as an economic shock reflecting the changes in, e.g., demand, macroeconomic conditions etc. A finite-time horizon model incorporating also that feature would clearly be even more realistic and general than ours. On the other hand, differently to the infinite time-horizon case, when $T<+\infty$ and the capacity $C$ is a linearly controlled diffusion, adding a further source of randomness would lead to a fully 3-dimensional problem. To the best of our knowledge a probabilistic analysis of free-boundaries in that setting falls outside the scopes of the existing theory, it is a very hard task and it has widely been overlooked so far. It certainly represents a challenge for future research.}

The paper is organized as follows. In Section \ref{reversibleinvestment} we introduce the partially reversible investment problem and we prove existence and uniqueness of the optimal control. In Section \ref{zerosumoptimalstoppinggame} we study the associated zero-sum optimal stopping game by means of a probabilistic approach to free-boundary problems. In particular in this Section we obtain the system of integral equations for $(v, \hat{y}_{+}, \hat{y}_{-})$ mentioned above. Finally, in Section \ref{optimalcontrolstrategy} we find the optimal control strategy and Section \ref{appproofs} contains some technical proofs.

\section{The Partially Reversible Investment Problem}
\label{reversibleinvestment}

A firm represents the productive sector of a stochastic economy on a complete probability space $(\Omega, \mathcal{F},\mathbb{P})$. We consider an exogenous Brownian motion $W:=\{W(t), t \geq 0\}$ and denote by $\mathbb{F}:=\{\mathcal{F}_t, t \geq 0\}$ its natural filtration augmented by $\mathbb{P}$-null sets. Our setting is similar to the one in \cite{PhamGuo} but with finite time-horizon and no entry decision.
The firm produces at rate $R(C)$ when its own capacity is $C$. We assume that the firm can either invest or disinvest in the market and we denote by $\nu_{+}(t)$ ($\nu_{-}(t)$) the cumulative investment (disinvestment) up to time $t$. Both $\nu_{+}$ and $\nu_{-}$ are left-continuous, a.s.\ finite for all $t\ge0$, nondecreasing processes and we do not require the investment-disinvestment rates to be defined. Once the firm's manager adopts an investment-disinvestment strategy $\nu:=\nu_{+} - \nu_{-}$, then the production capacity evolves according to
\begin{align}\label{capacity}
dC^{y,\nu}(t)= C^{y,\nu}(t)[ -\mu_C dt + \sigma_C dW(t)] + f_Cd\nu(t), \qquad C^{y,\nu}(0)=y > 0,
\end{align}
where $\mu_C$, $\sigma_C$ and $f_C$ are given positive constants.
The parameter $f_C$ is a conversion factor: any unit of investment is converted into $f_C$ units of production capacity.
Setting $C^{0}(t):=C^{1,0}(t)$ we may write
\beq
\label{capacitysolution}
C^{y,\nu}(t)= C^0(t)[y + \overline{\nu}(t)],
\eeq
with
\begin{align}\label{nubarradefinizione}
\overline{\nu}(t) := \int_{0}^t \frac{f_C}{C^{0}(s)}d\nu(s),\quad C^{0}(t)=e^{-\mu_C t}\mathcal{M}_0(t)\:\:\:\text{and}\:\:\:\mathcal{M}_0(t):=e^{-\frac{1}{2}\sigma_C^2 t + \sigma_C W(t)}.%, \qquad t \geq 0.
\end{align}
Here $C^{0}$ represents the decay of a unit of initial capital without investment.
%In particular
%\beq
%\label{GBM}
%\eeq
%with $\mathcal{M}_0$ being the exponential martingale}}
%\beq
%\label{expmg}
%\eeq
In what follows we will also denote
\begin{align}\label{mus}
\hat{\mu}_C:=-\mu_C+\frac{1}{2}\sigma^2_C.%\qquad\text{and}\qquad\bar{\mu}:=\mu_C+\mu_F.
\end{align}

The production function of the firm is a nonnegative, measurable function $R: \mathbb{R}_{+} \mapsto \mathbb{R}_{+}$ of the production capacity and it satisfies the following assumption.
\begin{Assumptions}
\label{AssProfit}
The mapping $C \mapsto R(C)$ is nondecreasing with $R(0)=0$ and strictly concave. It is twice continuously differentiable on $(0,\infty)$ and it has first derivative $R_{c}(C):=\frac{\partial}{\partial C}R(C)$ satisfying the Inada conditions $$\lim_{C \rightarrow 0}R_{c}(C)= \infty,\,\,\,\,\,\,\,\,\,\,\,\,\,\,\,\lim_{C \rightarrow \infty}R_{c}(C)= 0.$$
\end{Assumptions}

Denote by
\begin{eqnarray*}
\mathcal{S}\hspace{-0.3cm}&:=&\hspace{-0.3cm}\{\nu :\Omega \times \mathbb{R}_{+}\mapsto \mathbb{R}_+\,\,\mbox{of\,\,bounded variation,\,\,left-continuous,\,\,adapted\,\,s.t.}\,\, \nu(0)=0,\,\,\mathbb{P}\mbox{-a.s.}\}
\end{eqnarray*}
the nonempty, convex set of investment-disinvestment processes. From now on let $\nu_{+}-\nu_{-}$ be the minimal decomposition of any $\nu \in \mathcal{S}$ into the difference of two left-continuous, nondecreasing, adapted processes such that $\nu_{\pm}(0)=0$ a.s.\ and such that the associated Borel measures on $[0,T]$, $d\nu_+$ and $d\nu_-$, are mutually singular (cf.\ \cite{Royden}, Chapter 11, Section 4).

We assume that the optimization runs over a finite time-horizon $[0,T]$ and we impose that the admissible investment-disinvestment strategies are such that the production capacity of (\ref{capacity}) remains positive.

\begin{definition}\label{admis-set}
For any $(t,y)\in[0,T]\times(0,\infty)$ we denote by $\mathcal{S}^y_{t,T}$ the class of $\nu\in\mathcal{S}$ restricted to $[0,T-t]$ and such that $y+\overline{\nu}(s)\ge 0$ $\mathbb{P}$-a.s.\ for any $s\in[0,T-t]$ (cf.\ \eqref{capacitysolution}).
\end{definition}

Starting at time zero and following an investment-disinvestment strategy $\nu \in \mathcal{S}^y_{0,T}$, the firm receives at terminal time $T$ a (discounted) payoff given by $e^{-\mu_F T}G(C^{y,\nu}(T))$. $G$ is the so-called scrap value of the control problem. We also assume that the function $G: \mathbb{R}_{+} \mapsto \mathbb{R}_{+}$ is concave, nondecreasing, continuously differentiable and such that
\beq
\label{derivatascrap}
\frac{c_{-}}{f_C} \leq G_c(C) \leq \frac{c_{+}}{f_C}-\eta_o
\eeq
for some $0<\eta_o<\frac{c_+-c_-}{f_C}$. Here $c_{+} > c_{-} > 0$ are the cost of investment and the benefit from disinvestment, respectively, in a market with frictions ($c_+=c_-$ if the market were frictionless).
Then, the firm's total expected profit, net of the costs, is given by
\begin{align}
\label{netprofit}
\hspace{-6pt}\mathcal{J}_{0,y}(\nu)\hspace{-2pt}=\hspace{-2pt}\mathbb{E}\bigg\{\hspace{-3pt}\int_0^{T}\hspace{-7pt} e^{- \mu_F t}R(C^{y,\nu}(t))dt \hspace{-1pt}-\hspace{-1pt} c_{+}\hspace{-3pt}\int_{0}^T \hspace{-7pt}e^{-\mu_F t} d\nu_{+}(t)\hspace{-1pt} + \hspace{-1pt}c_{-}\hspace{-3pt}\int_{0}^T\hspace{-7pt} e^{-\mu_F t} d\nu_{-}(t) \hspace{-1pt}+\hspace{-1pt} e^{-\mu_F T}G(C^{y,\nu}(T)) \bigg\}
\end{align}
where $\mu_F > 0$ is the firm's manager discount factor. The value $V$ of the optimal investment-disinvestment problem is
\beq
\label{optimalproblem0}
V(0,y):=\sup_{\nu \in \mathcal{S}^y_{0,T}}\mathcal{J}_{0,y}(\nu).
\eeq
Notice that the strict concavity of $R$, concavity of $G$ and the affine nature of $C^{y,\nu}$ in $\nu$ imply that $\mathcal{J}_{0,y}(\nu)$ is strictly concave on $\mathcal{S}^y_{0,T}$. Hence, if a solution $\nu^{*}$ of (\ref{optimalproblem0}) exists, it is unique.

\begin{proposition}
\label{boundKvaluefunction}
Let Assumption \ref{AssProfit} and condition (\ref{derivatascrap}) hold. Then, there exists $K:=K(T,y) > 0$, depending on $T$ and $y$, such that $0 \leq V(0,y) \leq K$.
\end{proposition}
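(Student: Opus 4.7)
The lower bound $V(0,y)\ge 0$ is immediate: the admissible strategy $\nu\equiv 0\in\mathcal{S}^y_{0,T}$ produces the nonnegative payoff $\mathcal{J}_{0,y}(0)$ because both $R$ and $G$ are nonnegative.

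For the upper bound the plan is to combine linear concave majorants of $R$ and $G$ with an integration-by-parts identity that trades expected capacity against the cost of control. Concavity together with $R(0)=0$ gives the tangent inequality $R(C)\le R(C_0)+R_c(C_0)\,C$ for every $C_0>0$ and every $C\ge 0$, while the upper bound in~(\ref{derivatascrap}) yields $G(C)\le G(0)+\beta\,C$ with $\beta:=c_+/f_C-\eta_o$. Applying It\^o's formula to $e^{-\mu_F t}C^{y,\nu}(t)$, localizing along $\tau_n:=\inf\{t\ge 0:\,C^{y,\nu}(t)\ge n\}$ so that the stochastic integral is a true martingale, and then passing to the limit by monotone convergence (and Fatou for the terminal term), I obtain
\begin{equation*}
(\mu_F+\mu_C)\,\mathbb{E}\!\int_0^T\! e^{-\mu_F t}C^{y,\nu}(t)\,dt+\mathbb{E}\bigl[e^{-\mu_F T}C^{y,\nu}(T)\bigr]\le y+f_C\,\mathbb{E}\!\int_0^T\! e^{-\mu_F t}\bigl(d\nu_+(t)-d\nu_-(t)\bigr).
\end{equation*}

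At this point I use the Inada conditions of Assumption~\ref{AssProfit} to choose $C_0>0$ so that $R_c(C_0)=\beta(\mu_F+\mu_C)$. With this choice the two capacity-dependent contributions in the linear bound on $\mathcal{J}_{0,y}(\nu)$ equal $\beta$ times the left-hand side of the displayed inequality and can be replaced by the right-hand side. The coefficients of $\mathbb{E}\!\int_0^T e^{-\mu_F t}d\nu_+(t)$ and $\mathbb{E}\!\int_0^T e^{-\mu_F t}d\nu_-(t)$ then come out equal to $-\eta_o f_C$ and $-(c_+-c_--\eta_o f_C)$ respectively; both are strictly negative thanks to $\eta_o<(c_+-c_-)/f_C$ in~(\ref{derivatascrap}). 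Dropping these nonpositive contributions yields the uniform bound
\begin{equation*}
\mathcal{J}_{0,y}(\nu)\le\frac{R(C_0)\bigl(1-e^{-\mu_F T}\bigr)}{\mu_F}+e^{-\mu_F T}G(0)+\beta\,y=:K(T,y),
\end{equation*}
depending only on $T$, $y$ and the fixed model parameters, and taking the supremum over $\nu$ gives the claim.

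The main technical obstacle is the justification of the It\^o step, since admissibility alone does not force $\sigma_C\!\int_0^T e^{-\mu_F t}C^{y,\nu}(t)\,dW(t)$ to be a genuine martingale. The localization along $\tau_n$ described above handles this, but one must be slightly careful when $n\to\infty$: it is convenient to restrict the supremum to strategies with $\mathbb{E}\!\int_0^T e^{-\mu_F t}d\nu_\pm(t)<\infty$ from the outset, because for any other $\nu$ the linear majorant of $R$ combined with the localized identity shows that the rewards cannot compensate the divergent investment cost and $\mathcal{J}_{0,y}(\nu)=-\infty$, so such strategies do not affect the sup.
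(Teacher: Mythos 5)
Your argument is correct and reaches the required bound, but by a genuinely different implementation than the paper's. The paper works with the explicit representation $C^{y,\nu}(t)=C^0(t)[y+\overline{\nu}(t)]$ of \eqref{capacitysolution}, absorbs the exponential martingale $\mathcal{M}_0$ into a change of measure $\tilde{\mathbb{P}}$ as in \eqref{Girsanov} (using the identity $\mathbb{E}\{\int e^{-\bar{\mu}t}\mathbb{E}\{\mathcal{M}_0(T)|\mathcal{F}_t\}d\overline{\nu}_{\pm}(t)\}=\mathbb{E}\{\mathcal{M}_0(T)\int e^{-\bar{\mu}t}d\overline{\nu}_{\pm}(t)\}$ from Jacod), and then integrates $\int_0^T e^{-\bar{\mu}t}d\overline{\nu}_{\pm}(t)$ by parts deterministically; the slope of the linear majorant of $R$ is chosen as $\epsilon=c_+\bar{\mu}/f_C$ so that the coefficient of the $\overline{\nu}_+$ terms vanishes exactly, while the $\overline{\nu}_-$ terms are nonpositive because $c_+>c_-$. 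You instead apply It\^o's formula to $e^{-\mu_F t}C^{y,\nu}(t)$ under the original measure and localize, obtaining the same identity trading expected discounted capacity against discounted control increments; your tangent point $R_c(C_0)=\beta\bar{\mu}$ with $\beta=c_+/f_C-\eta_o$ plays exactly the role of the paper's $\epsilon$ (the bound $\kappa_\epsilon+\epsilon C$ is nothing but the tangent line at $R_c^{-1}(\epsilon)$), and keeping the margin $\eta_o$ from \eqref{derivatascrap} makes both control coefficients strictly negative rather than one zero and one negative. What each approach buys: the change of measure eliminates the stochastic integral without any localization, whereas your direct computation avoids Girsanov at the cost of the localization step, which you correctly identify and handle. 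Your caveat about strategies with infinite expected discounted investment is legitimate but not a defect relative to the paper, whose manipulations implicitly require the same finiteness; restricting the supremum to such strategies (or running the whole chain of estimates at the localized level before letting $n\to\infty$) settles it.
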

\begin{proof}
Nonnegativity of $V(0,y)$ follows by taking $\nu_{+}(t) = \nu_{-}(t) \equiv 0$, for $t \geq 0$.
To show that $V$ is bounded from above, recall that $C^0(t)=e^{-\mu_C t}\mathcal{M}_0(t)$ (cf.\ (\ref{nubarradefinizione})) and that for any $\epsilon > 0$ there exists $\kappa_{\epsilon}$ such that $R(C) \leq \kappa_{\epsilon} + \epsilon C$, by Inada conditions (cf.\ Assumption \ref{AssProfit}). Also there exists $\kappa_G \geq 0$ s.t.\ $G(C) \leq \kappa_G + \big(\frac{c_+}{f_C}-\eta_o\big)C\le \kappa_G + \frac{c_+}{f_C}C$ by (\ref{derivatascrap}). Hence, setting $\bar{\mu}:=\mu_F + \mu_C$, for $\nu \in \mathcal{S}^y_{0,T}$ and $\overline{\nu}$ as in (\ref{nubarradefinizione}), we may write
\begin{eqnarray}
\label{Vfinita}
\mathcal{J}_{0,y}(\nu) \hspace{-0.25cm}& \leq &\hspace{-0.25cm} \mathbb{E}\bigg\{\int_0^T e^{-\mu_F t} [\kappa_{\epsilon} + \epsilon C^{y,\nu}(t)] dt - \frac{c_{+}}{f_C}\int_{0}^Te^{-\bar{\mu} t}\mathbb{E}\{\mathcal{M}_0(T)|\mathcal{F}_t\}d\overline{\nu}_{+}(t) \nonumber \\
& & \hspace{0.4cm} + \frac{c_{-}}{f_C}\int_{0}^T e^{- \bar{\mu} t}\mathbb{E}\{\mathcal{M}_0(T)|\mathcal{F}_t\} d\overline{\nu}_{-}(t) + \kappa_G + \frac{c_+}{f_C}e^{-\bar{\mu}T}\mathcal{M}_0(T)[y + \overline{\nu}_{+}(T) - \overline{\nu}_{-}(T)] \bigg\} \nonumber \\
\hspace{-0.25cm} & \leq & \hspace{-0.25cm} \kappa_{\epsilon}T + \frac{c_+}{f_C}y + \kappa_G + \epsilon y \mathbb{E}\bigg\{\int_0^T e^{-\bar{\mu} t} \mathcal{M}_0(t) dt \bigg\} + \epsilon \mathbb{E}\bigg\{\int_0^T e^{-\bar{\mu} t} \mathcal{M}_0(t) \overline{\nu}_{+}(t) dt\bigg\} \nonumber \\
& & \hspace{0.4cm} - \epsilon \mathbb{E}\bigg\{\int_0^T e^{-\bar{\mu} t} \mathcal{M}_0(t) \overline{\nu}_{-}(t) dt\bigg\} - \frac{c_{+}}{f_C}\mathbb{E}\bigg\{\int_{0}^T e^{-\bar{\mu} t}\mathbb{E}\{\mathcal{M}_0(T)|\mathcal{F}_t\}d\overline{\nu}_{+}(t)\bigg\}  \\
& & \hspace{0.4cm} +\, \frac{c_{-}}{f_C}\mathbb{E}\bigg\{\int_{0}^T e^{-\bar{\mu} t}\mathbb{E}\{\mathcal{M}_0(T)|\mathcal{F}_t\}d\overline{\nu}_{-}(t)\bigg\} +\,\frac{c_+}{f_C}\mathbb{E}\Big\{e^{-\bar{\mu}T}\mathcal{M}_0(T)[\overline{\nu}_{+}(T) - \overline{\nu}_{-}(T)]\Big\}. \nonumber
\end{eqnarray}
Notice now that $\mathbb{E}\{\int_{[0,T)}e^{-\bar{\mu}t}\mathbb{E}\{\mathcal{M}_0(T)|\mathcal{F}_t\} d\overline{\nu}_{\pm}(t)\} = \mathbb{E}\{\mathcal{M}_0(T) \int_{[0,T)}e^{-\bar{\mu}t} d\overline{\nu}_{\pm}(t)\}$, by \cite{RevuzYor}, Chapter V, Exercise 1.13, p.186, and introduce the new probability measure $\tilde{\mathbb{P}}$ defined by
\beq
\label{Girsanov}
\frac{d\tilde{\mathbb{P}}}{d\mathbb{P}}\Big |_{\mathcal{F}_t}:= \mathcal{M}_0(t)= e^{-\frac{1}{2}\sigma^2_C t + \sigma_C W(t)}, \,\,\,t \geq 0.
\eeq
Then, integrating by parts the integrals with respect to $d\nu_{\pm}$, we obtain from (\ref{Vfinita}) that
\begin{eqnarray}
\label{Vfinita2}
\mathcal{J}_{0,y}(\nu)
%\hspace{-0.25cm} & \leq & \hspace{-0.25cm} (\kappa_{\epsilon}T + \frac{c_+}{f_C}y + \kappa_G + \epsilon y T) + \epsilon \tilde{\mathbb{E}}\bigg\{\int_{0}^T e^{-\bar{\mu}t}\overline{\nu}_{+}(t) dt \bigg\} - \epsilon \tilde{\mathbb{E}}\bigg\{\int_{0}^T e^{-\bar{\mu}t}\overline{\nu}_{-}(t) dt \bigg\} \nonumber \\
%& & \hspace{0.6cm} - \frac{c_{+}\bar{\mu}}{f_C}\tilde{\mathbb{E}}\bigg\{\int_{0}^Te^{-\bar{\mu} t}\overline{\nu}_{+}(t) dt\bigg\} + \frac{c_{-}\bar{\mu}}{f_C}\tilde{\mathbb{E}}\bigg\{\int_{0}^Te^{-\bar{\mu} t}\overline{\nu}_{-}(t) dt\bigg\} \nonumber \\
%& & \hspace{0.6cm} + \frac{c_{-}}{f_C}\tilde{\mathbb{E}}\Big\{e^{-\bar{\mu}T}\overline{\nu}_{-}(T)\Big\} - \frac{c_+}{f_C}\tilde{\mathbb{E}}\Big\{e^{-\bar{\mu}T}\overline{\nu}_{-}(T)\Big\} \\
\hspace{-0.25cm} & \leq & \hspace{-0.25cm} (\kappa_{\epsilon}T + \frac{c_+}{f_C}y + \kappa_G + \epsilon yT) + \left(\epsilon - \frac{c_{+}\bar{\mu}}{f_C}\right)\tilde{\mathbb{E}}\bigg\{\int_{0}^T e^{-\bar{\mu} t}\overline{\nu}_{+}(t) dt\bigg\} \nonumber\\
& & \hspace{0.6cm} + \left(\frac{c_{-}\bar{\mu}}{f_C} - \epsilon\right)\tilde{\mathbb{E}}\bigg\{\int_{0}^Te^{-\bar{\mu} t}\overline{\nu}_{-}(t) dt\bigg\} + \left(\frac{c_{-}}{f_C} - \frac{c_{+}}{f_C}\right) \tilde{\mathbb{E}}\Big\{e^{-\bar{\mu}T}\overline{\nu}_{-}(T)\Big\} \nonumber \\
\hspace{-0.25cm} & \leq & \hspace{-0.25cm} K + \left(\epsilon - \frac{c_{+}\bar{\mu}}{f_C}\right)\tilde{\mathbb{E}}\bigg\{\int_{0}^T e^{-\bar{\mu} t}\overline{\nu}_{+}(t) dt\bigg\} + \left(\frac{c_{-}\bar{\mu}}{f_C} - \epsilon\right)\tilde{\mathbb{E}}\bigg\{\int_{0}^Te^{-\bar{\mu} t}\overline{\nu}_{-}(t) dt\bigg\}, \nonumber
\end{eqnarray}
with $\tilde{\mathbb{E}}\{\cdot\}$ denoting the expectation under $\tilde{\mathbb{P}}$ and $K$ a positive constant independent of $\nu_{\pm}$ but depending on $y,T,\epsilon,c_+,c_{-},f_C,\kappa_G$.
Taking $\epsilon = \frac{c_{+} \bar{\mu}}{f_C}$, it follows
%\begin{equation}
%\label{stimafinaleJzeroy}
$\mathcal{J}_{0,y}(\nu) \leq K$ for all $\nu \in \mathcal{S}^y_{0,T}$,
%\end{equation}
since $c_+ > c_{-}$ and $\overline{\nu}_{-}(t) \geq 0$ a.s.\ for every $t\geq 0$.
\end{proof}

Recall that $(\nu^{n})_{n \in \mathbb{N}} \subset \mathcal{S}^y_{0,T}$ is a maximizing sequence if $\lim_{n\rightarrow \infty}\mathcal{J}_{0,y}(\nu^n) = V(0,y)$. Then, we have the following
\begin{corollary}
\label{corollboundnubarra}
For any maximizing sequence $(\nu^{n})_{n \in \mathbb{N}} \subset \mathcal{S}^y_{0,T}$ there exists $C:=C(T,y)>0$, depending on $T$ and $y$, such that $\tilde{\mathbb{E}}\{\overline{\nu}^n_+(T)\}+\tilde{\mathbb{E}}\{\overline{\nu}^n_-(T)\} \leq C$ for all $n\in\mathbb{N}$.
\end{corollary}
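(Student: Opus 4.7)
The plan is to redo the estimate of Proposition \ref{boundKvaluefunction} while keeping track of the terminal contributions coming from $\overline{\nu}_\pm(T)$. In that proof the scrap function was controlled only by $G(C)\le \kappa_G+(c_+/f_C)C$; under this rougher bound the terminal term $-(c_+/f_C)e^{-\bar{\mu}T}\tilde{\mathbb{E}}\{\overline{\nu}_+(T)\}$ produced by the integration by parts of $d\overline{\nu}_+$ cancels exactly with the corresponding contribution coming from the scrap value, which is why no information on $\tilde{\mathbb{E}}\{\overline{\nu}_+(T)\}$ could be extracted. Using instead the strict estimate $G(C)\le \kappa_G+(c_+/f_C - \eta_o)C$ granted by (\ref{derivatascrap}), this cancellation leaves a negative residual $-\eta_o\,e^{-\bar{\mu}T}\tilde{\mathbb{E}}\{\overline{\nu}_+(T)\}$, while the analogous accounting for $\overline{\nu}_-(T)$ produces the coefficient $-\big((c_+-c_-)/f_C - \eta_o\big)e^{-\bar{\mu}T}$, both strictly negative thanks to the assumption $0<\eta_o<(c_+-c_-)/f_C$.

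Concretely, I would rerun the chain of inequalities (\ref{Vfinita})--(\ref{Vfinita2}) with this sharper scrap estimate, still choosing the auxiliary parameter $\epsilon=c_+\bar{\mu}/f_C$ so that the two integral terms $\tilde{\mathbb{E}}\{\int_0^T e^{-\bar{\mu}t}\overline{\nu}_\pm(t)dt\}$ appear with non-positive coefficients (identically zero for $\overline{\nu}_+$ and equal to $-(c_+-c_-)\bar{\mu}/f_C$ for $\overline{\nu}_-$). Dropping these non-positive terms and grouping the endpoint contributions, one is led to an inequality of the form
\[
\mathcal{J}_{0,y}(\nu)\,\le\,K'\,-\,\alpha\,e^{-\bar{\mu}T}\Big[\tilde{\mathbb{E}}\{\overline{\nu}_+(T)\}+\tilde{\mathbb{E}}\{\overline{\nu}_-(T)\}\Big],\qquad \nu\in\mathcal{S}^y_{0,T},
\]
where $\alpha:=\min\{\eta_o,\,(c_+-c_-)/f_C-\eta_o\}>0$ and $K'=K'(T,y)>0$ is independent of $\nu$.

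The argument is then closed by using that, along any maximizing sequence, $\mathcal{J}_{0,y}(\nu^n)\to V(0,y)\in[0,\infty)$, so that the sequence is uniformly bounded from below; denoting such a lower bound by $-M$ and applying the previous display to $\nu=\nu^n$ delivers
\[
\tilde{\mathbb{E}}\{\overline{\nu}^n_+(T)\}+\tilde{\mathbb{E}}\{\overline{\nu}^n_-(T)\}\,\le\,\frac{(K'+M)\,e^{\bar{\mu}T}}{\alpha}\;=:\;C(T,y),
\]
uniformly in $n$, which is the desired statement.

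The only delicate point is the careful bookkeeping of the terminal terms after the integration by parts in $d\overline{\nu}_\pm$; all other estimates are identical to those already carried out in Proposition \ref{boundKvaluefunction}. The essential structural observation is that the strict slack $\eta_o$ built into (\ref{derivatascrap}), which was not exploited in the proof of the Proposition, is precisely what prevents the pathological cancellation at the horizon $T$ and hence provides the linear penalty on $\tilde{\mathbb{E}}\{\overline{\nu}_+(T)\}$ needed here.
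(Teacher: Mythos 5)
Your proposal is correct and follows essentially the same route as the paper's own proof: both exploit the slack $\eta_o$ in \eqref{derivatascrap} so that, after the integration by parts in $d\overline{\nu}_\pm$, the terminal terms $\tilde{\mathbb{E}}\{\overline{\nu}^n_\pm(T)\}$ survive with the strictly negative coefficients $-\eta_o e^{-\bar\mu T}$ and $-\big(\tfrac{c_+-c_-}{f_C}-\eta_o\big)e^{-\bar\mu T}$, which combined with the lower bound along a maximizing sequence yields the uniform estimate. The only cosmetic differences are your choice $\epsilon=c_+\bar\mu/f_C$ versus the paper's $\epsilon=c_-\bar\mu/f_C$ (either makes both integral coefficients non-positive) and your invocation of a generic lower bound $-M$ where the paper uses $\mathcal{J}_{0,y}(\overline{\nu}^n)\ge V(0,y)-\tfrac1n\ge-1$.
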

\begin{proof}
Note that the mapping $\nu \mapsto \overline{\nu}$ is one to one and onto. Therefore $(\nu^{n})_{n \in \mathbb{N}} \subset \mathcal{S}^y_{0,T}$ is a maximizing sequence if and only if the associated sequence $(\overline{\nu}^n)_{n \in \mathbb{N}} \subset \mathcal{S}^y_{0,T}$ is maximizing as well. There is no restriction assuming $\mathcal{J}_{0,y}(\overline{\nu}^n)\ge V(0,y)-\frac{1}{n}$. Under the measure $\tilde{\mathbb{P}}$ (cf.\ \eqref{Girsanov}) we may write the net profit functional $\mathcal{J}_{0,y}$ in (\ref{netprofit}) for any $\overline{\nu}^n \in \mathcal{S}^y_{0,T}$ as
\begin{eqnarray}
\label{netprofitptilde2}
\mathcal{J}_{0,y}(\overline{\nu}^n)\hspace{-0.25cm}&=&\hspace{-0.25cm}\tilde{\mathbb{E}}\bigg\{\int_0^{T} e^{- \mu_F t}\,\frac{1}{\mathcal{M}_0(t)}R(C^{y,\overline{\nu}^n}(t))dt - \frac{c_{+}}{f_C} \int_{0}^T e^{-\bar{\mu} t} d\overline{\nu}^n_{+}(t) + \frac{c_{-}}{f_C} \int_{0}^T e^{-\bar{\mu} t} d\overline{\nu}^n_{-}(t) \nonumber \\
& &\hspace{1.5cm} + e^{-\mu_F T}\frac{1}{\mathcal{M}_0(T)}G(C^{y,\overline{\nu}^n}(T)) \bigg\}.
\end{eqnarray}
From Assumption \ref{AssProfit}, for any $\epsilon>0$ there exists $\kappa_\epsilon>0$ such that $R(C)\le \kappa_\epsilon+\epsilon C$, then recalling \eqref{capacitysolution} we find
\begin{align}
\label{caciottari01}
\tilde{\mathbb{E}}&\bigg\{\int_0^{T} e^{- \mu_F t}\,\frac{1}{\mathcal{M}_0(t)}R(C^{y,\overline{\nu}^n}(t))dt\bigg\}\le\tilde{\mathbb{E}}\bigg\{\int_0^{T} e^{- \mu_F t}\,\frac{\kappa_\epsilon}{\mathcal{M}_0(t)}dt\bigg\}+\epsilon\,y+\epsilon\tilde{\mathbb{E}}\bigg\{\int_0^{T}{\hspace{-5pt}e^{-\bar\mu t}\,\overline{\nu}^n(t)dt}\bigg\}.
\end{align}
Analogously, from \eqref{derivatascrap} it follows that there exists $\kappa_G>0$ such that $G(C)\le \kappa_G+\big(\frac{c_+}{f_C}-\eta_o\big)C$ and therefore
\begin{align}\label{caciottari02}
\tilde{\mathbb{E}}&\bigg\{e^{-\mu_F T}\frac{1}{\mathcal{M}_0(T)}G(C^{y,\overline{\nu}^n}(T))\bigg\}\le\tilde{\mathbb{E}}\bigg\{e^{-\mu_F T}\frac{\kappa_G}{\mathcal{M}_0(T)}\bigg\}+\frac{c_+}{f_C}y+\big(\frac{c_+}{f_C}-\eta_o\big)e^{-\bar{\mu}T}\tilde{\mathbb{E}}\big\{\overline{\nu}^n(T)\big\}.
\end{align}
Now, from \eqref{netprofitptilde2}, \eqref{caciottari01}, \eqref{caciottari02} and integrating by parts integrals with respect to the measures $d\overline{\nu}_\pm$, we obtain
\begin{eqnarray*}
\label{caciottari03}
& & V(0,y)-\frac{1}{n} \leq \, \mathcal{J}_{0,y}(\overline{\nu}^n)\leq\, c(\epsilon,y,T)+\big(\frac{\bar\mu c_-}{f_C}-\epsilon\big)\tilde{\mathbb{E}}\bigg\{\int^T_0{e^{-\bar\mu t}\overline{\nu}^n_-(t)dt}\bigg\} \nonumber \\
& & -\big(\frac{c_+-c_-}{f_C}-\eta_o\big)e^{-\bar\mu T}\tilde{\mathbb{E}}\big\{\overline{\nu}^n_-(T)\big\} -\big(\frac{\bar\mu c_+}{f_C}-\epsilon\big)\tilde{\mathbb{E}}\bigg\{\int^T_0{e^{-\bar\mu t}\overline{\nu}^n_+(t)dt}\bigg\}-\eta_oe^{-\bar\mu T}\tilde{\mathbb{E}}\big\{\overline{\nu}^n_+(T)\big\} \nonumber
\end{eqnarray*}
where $c(\epsilon,y,T)>0$ is a suitable constant depending on $\epsilon$, $y$ and $T$. Setting $\epsilon=\bar\mu c_-/ f_C$ and recalling that $\eta_o<\frac{c_+-c_-}{f_C}$ (cf.\ \eqref{derivatascrap}) we find the bound
\begin{align}\label{caciottari04}
\big(\frac{c_+-c_-}{f_C}-\eta_o\big)e^{-\bar\mu T}\tilde{\mathbb{E}}\big\{\overline{\nu}^n_-(T)\big\}+\eta_oe^{-\bar\mu T}\tilde{\mathbb{E}}\big\{\overline{\nu}^n_+(T)\big\}\le c(\epsilon,y,T)+1.
\end{align}
Since \eqref{caciottari04} holds for all $n\in\mathbb{N}$ this completes the proof.
\end{proof}

Next we show the existence of a unique optimal solution pair $(\nu^{*}_{+},\nu^{*}_{-})$ to problem (\ref{optimalproblem0}).
\begin{theorem}
\label{existence}
Under Assumption \ref{AssProfit} and condition (\ref{derivatascrap}) there exists a unique admissible investment-disinvestment strategy $\nu^{*}$ which is optimal for problem (\ref{optimalproblem0}).
\end{theorem}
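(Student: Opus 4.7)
\medskip

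\noindent\emph{Uniqueness} of the optimizer has already been observed in the paragraph after \eqref{optimalproblem0} as a consequence of the strict concavity of $\mathcal{J}_{0,y}$ on the convex set $\mathcal{S}^y_{0,T}$. My plan for \emph{existence} is to construct $\nu^*$ as a limit of a carefully chosen maximizing sequence and then pass to the limit in $\mathcal{J}_{0,y}$, exploiting concavity together with the $L^1$-bound provided by Corollary \ref{corollboundnubarra}. Fix a maximizing sequence $(\nu^n)_{n\in\mathbb{N}}\subset\mathcal{S}^y_{0,T}$ with $\mathcal{J}_{0,y}(\nu^n)\ge V(0,y)-1/n$. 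By Corollary \ref{corollboundnubarra} one has $\sup_n\tilde{\mathbb{E}}\{\overline{\nu}^n_+(T)+\overline{\nu}^n_-(T)\}<\infty$; I then apply Koml\'os' theorem under $\tilde{\mathbb{P}}$ to extract convex combinations $\widehat{\nu}^n=\sum_{k=n}^{N_n}\lambda^n_k\nu^k$, with $\lambda^n_k\ge 0$ and $\sum_k\lambda^n_k=1$, such that $\widehat{\overline{\nu}}^n_\pm(T)$ converge $\tilde{\mathbb{P}}$-a.s.\ (hence $\mathbb{P}$-a.s., the two measures being equivalent). Concavity of $\mathcal{J}_{0,y}$ ensures that the new sequence $(\widehat{\nu}^n)$ is still maximizing.

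Next, I build the candidate limit pathwise. For $\mathbb{P}$-a.e.\ $\omega$, $t\mapsto\widehat{\overline{\nu}}^n_\pm(t,\omega)$ is nondecreasing and uniformly bounded by $\widehat{\overline{\nu}}^n_\pm(T,\omega)$, which converges. Helly's selection theorem together with a diagonal argument over a countable dense set $\mathcal{D}\subset[0,T]$ yields a further subsequence converging at each $t\in\mathcal{D}$ to some nondecreasing $\overline{\nu}^*_\pm(t,\omega)$. I take its left-continuous version, which inherits adaptedness to $\mathbb{F}$ as the a.s.\ pointwise limit of adapted, left-continuous processes; the state constraint $y+\overline{\nu}^*(t)\ge 0$ passes to the limit by monotonicity, and defining $\nu^*$ via the bijection $\nu\leftrightarrow\overline{\nu}$ of \eqref{nubarradefinizione} gives $\nu^*\in\mathcal{S}^y_{0,T}$.

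Finally, I pass to the limit in $\mathcal{J}_{0,y}(\widehat{\nu}^n)$ using the representation \eqref{netprofitptilde2}. Integration by parts rewrites each cost integral as $e^{-\bar{\mu}T}\overline{\nu}^n_\pm(T)+\bar{\mu}\int_0^T e^{-\bar{\mu}t}\overline{\nu}^n_\pm(t)\,dt$: the boundary term at $T$ converges by the Koml\'os a.s.\ convergence of $\widehat{\overline{\nu}}^n_\pm(T)$ combined with a uniform-integrability upgrade (Vitali) from the Corollary \ref{corollboundnubarra} bound, and the time integral converges by pathwise Helly and dominated convergence. The running profit and scrap value are treated analogously, using the sublinear upper bounds $R(C)\le\kappa_\epsilon+\epsilon C$ (Assumption \ref{AssProfit}) and $G(C)\le\kappa_G+(c_+/f_C)C$ (from \eqref{derivatascrap}) to provide integrable dominants, while continuity of $R$ and $G$ together with a.e.\ pointwise convergence of $\widehat{\overline{\nu}}^n(t)$ delivers the integrand convergence. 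Hence $\mathcal{J}_{0,y}(\nu^*)\ge\limsup_n\mathcal{J}_{0,y}(\widehat{\nu}^n)=V(0,y)$, i.e.\ $\nu^*$ is optimal.

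The principal obstacle is the possible \emph{concentration of mass at the terminal time} $T$. Plain pathwise Helly selection may lose a jump of the limit at $T$, and then neither the scrap value $G(C^{y,\nu^*}(T))$ nor the boundary pieces produced by the integration-by-parts of the cost integrals would be controlled. This is precisely why the Koml\'os step is indispensable: it turns the uniform $L^1(\tilde{\mathbb{P}})$-bound on $\overline{\nu}^n_\pm(T)$ from Corollary \ref{corollboundnubarra} into almost-sure convergence of $\widehat{\overline{\nu}}^n_\pm(T)$, which, combined with Vitali's theorem, restores control over all the contributions at $T$ and allows the limiting procedure to close.
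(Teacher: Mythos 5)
Your overall strategy (Koml\'os extraction from a maximizing sequence, concavity to preserve the maximizing property, a Fatou-type passage to the limit, and uniqueness from strict concavity) is the same as the paper's, but two steps do not close as written. First, the selection of the limit process: pathwise Helly gives, for each fixed $\omega$, an $\omega$-\emph{dependent} subsequence, and a diagonal argument over a countable dense set of \emph{times} does not remove this dependence; consequently the ``further subsequence converging at each $t\in\mathcal{D}$'' is not a single deterministic subsequence, so your limit $\overline{\nu}^*_\pm$ is not well defined as a process and the claimed adaptedness does not follow. The paper avoids this by invoking a process version of Koml\'os' theorem for increasing predictable processes (\cite{Kabanov}, Lemma $3.5$), which yields Ces\`aro convergence $\tilde{\mathbb{P}}$-a.s.\ \emph{simultaneously at every} $t\in[0,T]$ along one subsequence; if you want to stay with scalar Koml\'os you must iterate it at each $t$ in a countable dense set and diagonalize, not use Helly.

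Second, the limit passage in $\mathcal{J}_{0,y}$ cannot be run term by term with Vitali and dominated convergence: Corollary \ref{corollboundnubarra} gives only an $L^1(\tilde{\mathbb{P}})$ bound on $\overline{\nu}^n_\pm(T)$, and $L^1$-boundedness is not uniform integrability, so there is no ``uniform-integrability upgrade'' for the boundary terms $e^{-\bar{\mu}T}\overline{\nu}^n_\pm(T)$, nor an integrable dominant for $\int_0^T e^{-\bar{\mu}t}\overline{\nu}^n_\pm(t)\,dt$ or for the linearly growing upper bounds on $R$ and $G$. The paper's argument is deliberately one-sided: it groups the running profit with the running costs and the scrap value with the terminal cost terms, so that (choosing $\epsilon=\bar{\mu}c_-/f_C$ and using $c_+>c_-$) the contributions of $\overline{\nu}_+(T)$ cancel, the remaining $\overline{\nu}$-dependent terms are nonpositive, and the resulting upper bounds $\Phi^{y,\overline{\nu}}(t)\le \hat{K}(1+1/\mathcal{M}_0(t))$ and \eqref{boundscrapeterminiaT} are integrable and independent of $\overline{\nu}$; reverse Fatou then gives exactly the inequality $\limsup_j\mathcal{J}_{0,y}(\theta^j)\le\mathcal{J}_{0,y}(\overline{\nu}^*)$ that is needed, with no uniform integrability. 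You correctly identify concentration of mass at $T$ as the danger, but your proposed remedy (a.s.\ convergence at $T$ plus Vitali) does not neutralize it; the sign cancellations do.
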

\begin{proof}
From Corollary \ref{corollboundnubarra} we have that admissible maximizing sequences $(\tilde{\mathbb{E}}\{\overline{\nu}^n_{\pm}(T)\})_{n \in \mathbb{N}}$ are uniformly bounded and hence by a version of Koml\`{o}s' Theorem for predictable increasing processes (cf.\ \cite{Kabanov}, Lemma $3.5$) there exist two subsequences $(\overline{\nu}^{n_k}_{\pm})_{k \in \mathbb{N}}$ that converge a.s.\ for each time $t \in [0,T]$ in the Ces\`{a}ro sense to some integrable, increasing and predictable $\overline{\nu}^{*}_{\pm}$; i.e., if we define
\beq
\label{CesaroSPmeno}
\theta^{j}_{\pm}(t) := \frac{1}{j+1}\sum_{k=0}^{j}\overline{\nu}^{n_k}_{\pm}(t),
\eeq
then $\lim_{j \rightarrow \infty}\theta^{j}_{\pm}(t) = \overline{\nu}^{*}_{\pm}(t)$ a.s.\ for every $t \in [0,T]$.
Clearly, this implies a.s.\ weak convergence of $\theta^{j}_{\pm}$ to $\overline{\nu}^{*}_{\pm}$; that is,
\begin{equation}
\label{weakconvI}
\lim_{j \rightarrow \infty} \int_0^T f(t)\,d\theta^{j}_{\pm}(t) = \int_0^T f(t)\,d\overline{\nu}^{*}_{\pm}(t),\,\,\,\tilde{\mathbb{P}}-\text{a.s.},
\end{equation}
for every continuous and bounded function $f(\cdot)$ (see, e.g., \cite{RevuzYor}, Chapter $0$, Section $5$). We still denote by $\overline{\nu}^*_\pm$ the left-continuous modifications of $\overline{\nu}^*_\pm$ and clearly $\overline{\nu}^*:=\overline{\nu}^*_+-\overline{\nu}^*_-\in\mathcal{S}^{y}_{0,T}$.

Since $(\overline{\nu}^{n})_{n \in \mathbb{N}}$ is a maximizing sequence, then $(\theta^{j})_{j \in \mathbb{N}}$, $\theta^j:=\theta^j_{+} -\theta^j_{-}$, is maximizing as well by concavity of the profit functional. Now, if we could use (reverse) Fatou's Lemma, we would obtain
\beq
\label{ifFatou}
V(0,y) \leq \limsup_{j \rightarrow \infty}J_{0,y}(\theta^j) \leq \mathcal{J}_{0,y}(\overline{\nu}^{*}),
\eeq
thus the optimality of $\nu^{*}_{\pm}(t):= \int_{0}^t\frac{C^0(s)}{f_C}d\overline{\nu}^{*}_{\pm}(s)$. Uniqueness follows as usual from strict concavity of $\mathcal{J}_{0,y}$ and convexity of $\mathcal{S}^y_{0,T}$.

It remains to show that (reverse) Fatou's Lemma can be applied. Recall \eqref{netprofitptilde2} and set $\bar{\mu}=\mu_C + \mu_F$, then under $\tilde{\mathbb{P}}$ and for any $\overline{\nu} \in \mathcal{S}^y_{0,T}$, integrating by parts the integrals with respect to $d\overline{\nu}_{\pm}$, $\mathcal{J}_{0,y}$ may be written in terms of
\begin{eqnarray*}
\label{netprofitptilde}
\mathcal{J}_{0,y}(\overline{\nu})
%\hspace{-0.25cm}&=& \hspace{-0.25cm} \tilde{\mathbb{E}}\bigg\{\int_0^T \Big[e^{-\mu_Ft}\frac{1}{\mathcal{M}_0(t)}R(C^{y,\overline{\nu}}(t)) - e^{-\bar{\mu}t}\Big(\frac{c_{+}\bar{\mu}}{f_C}\overline{\nu}_{+}(t) - \frac{c_{-}\bar{\mu}}{f_C}\overline{\nu}_{-}(t)\Big)\Big]\,dt \nonumber \\
%& & \hspace{1.5cm} - e^{-\bar{\mu}T}\Big(\frac{c_{+}}{f_C}\overline{\nu}_{+}(T) - \frac{c_{-}}{f_C}\overline{\nu}_{-}(T)\Big) + e^{-\mu_F T}\frac{1}{\mathcal{M}_0(T)}G(C^{y,\overline{\nu}}(T))\bigg\} \nonumber \\
\hspace{-0.25cm} & = & \hspace{-0.25cm} \tilde{\mathbb{E}}\bigg\{\int_0^T \Phi^{y, \overline{\nu}}(t)\,dt + \hat{G}^{y,\overline{\nu}}(T)\bigg\},
\end{eqnarray*}
$$\Phi^{y, \overline{\nu}}(t):= e^{-\mu_Ft}\frac{1}{\mathcal{M}_0(t)}R(C^{y,\overline{\nu}}(t)) - e^{-\bar{\mu}t}\Big(\frac{c_{+}\bar{\mu}}{f_C}\overline{\nu}_{+}(t) - \frac{c_{-}\bar{\mu}}{f_C}\overline{\nu}_{-}(t)\Big),$$
$$\hat{G}^{y,\overline{\nu}}(T):= e^{-\mu_F T}\frac{1}{\mathcal{M}_0(T)}G(C^{y,\overline{\nu}}(T)) - e^{-\bar{\mu}T}\Big(\frac{c_{+}}{f_C}\overline{\nu}_{+}(T) - \frac{c_{-}}{f_C}\overline{\nu}_{-}(T)\Big).$$
Recall (\ref{capacitysolution}) and $c_+ > c_{-}$.
Since for every $\epsilon > 0$ there exists $\kappa_{\epsilon}>0$ such that $R(C) \leq \kappa_\epsilon + \epsilon C$ (cf.\ Assumption \ref{AssProfit}), then we obtain
\begin{equation*}
\Phi^{y, \overline{\nu}}(t) \leq \frac{\kappa_{\epsilon}e^{-\mu_F t}}{\mathcal{M}_0(t)} + \epsilon y e^{-\bar{\mu}t} + e^{-\bar{\mu}t}\overline{\nu}_{+}(t)\left(\epsilon - \frac{c_{+}\bar{\mu}}{f_C}\right) + e^{-\bar{\mu}t}\overline{\nu}_{-}(t)\left(\frac{c_{-}\bar{\mu}}{f_C} - \epsilon \right),\qquad\,\,\overline{\nu} \in \mathcal{S}^y_{0,T}.
\end{equation*}
We now take $\epsilon = \frac{\bar{\mu}c_{-}}{f_C}$ and we find
\beq
\label{boundPHI}
\Phi^{y,\overline{\nu}}(t) \leq \hat{K}\left(1 + \frac{1}{\mathcal{M}_0(t)}\right),
\eeq
for some $\hat{K}>0$, and the right-hand side of (\ref{boundPHI}) is $d\tilde{\mathbb{P}} \otimes dt$-integrable and independent of $\overline{\nu}$.
Again, $G(C) \leq \kappa_G + \big(\frac{c_+}{f_C}-\eta_o\big)C \le \kappa_G + \frac{c_+}{f_C}C $, for some $\kappa_G \geq 0$ (cf.\ (\ref{derivatascrap})), and hence
\beq
\label{boundscrapeterminiaT}
\hat{G}^{y,\overline{\nu}}(T) \leq \frac{\kappa_G e^{-\mu_F T}}{\mathcal{M}_0(T)} + \frac{c_{+}y}{f_C}e^{-\bar{\mu}T}.
\eeq
Note that the right-hand side of (\ref{boundscrapeterminiaT}) is independent of $\overline{\nu}$ and $\tilde{\mathbb{P}}$-integrable. Therefore we can apply Fatou's Lemma to justify (\ref{ifFatou}).
\end{proof}

\begin{remark}
\label{stoch-op-fun}
Arguments above for existence and uniqueness of the optimal control hold as well in presence of an exogenous stochastic factor influencing the operating profit function. That is when $R:\Omega\times\mathbb{R}_+\mapsto\mathbb{R}_+$ and $G:\Omega\times\mathbb{R}_+\mapsto\mathbb{R}_+$ satisfying standard measurability conditions with $R(\omega,\,\cdot\,)$ as in Assumption \ref{AssProfit} and $G_c(\omega,\,\cdot\,)$ as in \eqref{derivatascrap} for all $\omega\in\Omega$ (see also \cite{KaratzasWang}).
\end{remark}

\begin{remark}
\label{connectionDixitPyndick}
We can draw an analogy between our setting and Dixit-Pindyck-like models \cite{DixitPindyck} by taking for instance $R(y):= y^\alpha$, $\alpha\in(0,1)$ and $G(y)=c_{-}/f_C \,y$. In fact, recall \eqref{capacitysolution} and \eqref{nubarradefinizione} and notice that we can write $R(C^{y,\nu}(t))=\mathcal{M}_0(t)X(t) \big(Z^{y,\nu}(t)\big)^\alpha$ where $Z^{y,\nu}(t):=e^{-\mu_C t}[y+\overline{\nu}(t)]$ and $X(t):=\big(\mathcal{M}_0(t)\big)^{\alpha-1}$. Similarly $G(C^{y,\nu}(T))=c_{-}/f_C\,\mathcal{M}_0(T)Z^{y,\nu}(T)$.

Then, changing measure according to \eqref{Girsanov} and setting $d\xi^{\nu}(t):=e^{-\mu_Ct}d\overline{\nu}(t)$, we obtain
\begin{align*}
\hspace{-6pt}\mathcal{J}_{0,y}(\nu)=\tilde{\mathbb{E}}\bigg\{\hspace{-2pt}\int_0^{T}\hspace{-6pt} e^{-\mu_F t}X(t)(Z^{y,\nu}(t))^{\alpha}dt \hspace{-1pt}-\hspace{-1pt} \frac{c_{+}}{f_C}\hspace{-3pt}\int_{0}^T \hspace{-6pt}e^{-\mu_F t} d\xi^{\nu}_{+}(t) \hspace{-1pt}+ \hspace{-1pt}\frac{c_{-}}{f_C}\hspace{-3pt}\int_{0}^T \hspace{-6pt}e^{-\mu_F t} d\xi^{\nu}_{-}(t) \hspace{-1pt}+\hspace{-1pt} e^{-\mu_F T}\frac{c_-}{f_C}Z^{y,\nu}(T)\bigg\}.
\end{align*}
Now, as in \cite{DixitPindyck}, $Z^{y,\nu}$ can be seen as a production capacity that follows the linearly controlled dynamics $dZ^{y,\nu}(t)=-\mu_C Z^{y,\nu}(t)dt+d\xi^{\nu}(t)$, starting from $Z^{y,\nu}(0)=y$; instead $X$ can be seen as the stochastic price of the produced good described by a suitable geometric Brownian motion with $X(0)=1$.
\end{remark}

\section{The Zero-Sum Optimal Stopping Game}
\label{zerosumoptimalstoppinggame}

In order to characterize the optimal control policy we shall associate to problem (\ref{optimalproblem0}) a suitable zero-sum optimal stopping game, in the spirit of \cite{Dynkin} and \cite{KaratzasWang}, among others. Then, we will show that the value function solves a free-boundary problem with two free-boundaries which are continuous, bounded and monotone solutions of a system of non-linear integral equations.

As usual in the literature of dynamic programming, we let the optimization in (\ref{optimalproblem0}) start at arbitrary time $t \in [0,T]$.
Since the solution of (\ref{capacity}) and the net profit functional are time-homogeneous, then we may simply set a time-horizon $[0,T-t]$ in (\ref{netprofit}) and write
\begin{eqnarray}
\label{netprofitt}
\mathcal{J}_{t,y}(\nu)\hspace{-0.25cm}&=&\hspace{-0.25cm}\mathbb{E}\bigg\{\int_0^{T-t} e^{- \mu_F s}\,R(C^{y,\nu}(s))ds - c_{+} \int_{0}^{T-t} e^{-\mu_F s} d\nu_{+}(s) + c_{-} \int_{0}^{T-t} e^{-\mu_F s} d\nu_{-}(s) \nonumber \\
& &\hspace{1.5cm} + e^{-\mu_F (T-t)}G(C^{y,\nu}(T-t)) \bigg\}.
\end{eqnarray}

It follows that the firm's investment-disinvestment problem now reads (cf.~Definition \ref{admis-set})
\beq
\label{optimalproblem}
V(t,y):=\sup_{\nu \in \mathcal{S}^y_{t,T}}\mathcal{J}_{t,y}(\nu).
\eeq
From (\ref{nubarradefinizione}) and (\ref{capacitysolution}), we may write the value function $V(t,y)$ of the optimal control problem (\ref{optimalproblem}) in terms of a maximization over the controls $\overline{\nu} \in \mathcal{S}^y_{t,T}$; that is,
\begin{eqnarray}
\label{Jnubarra}
V(t,y) \hspace{-0.25cm} & = & \hspace{-0.25cm} \sup_{\overline{\nu}\in \mathcal{S}^y_{t,T}}\mathbb{E}\bigg\{\int_0^{T-t} e^{- \mu_F s}\,R(C^0(s)[ y + \overline{\nu}(s)])ds - \frac{c_{+}}{f_C} \int_{0}^{T-t} e^{-\mu_F s} C^0(s) d\overline{\nu}_{+}(s) \nonumber \\
& & \hspace{2cm}+ \frac{c_{-}}{f_C} \int_{0}^{T-t} e^{-\mu_F t} C^0(s) d\overline{\nu}_{-}(s) + e^{-\mu_F (T-t)}G\big(C^0(T-t)[y + \overline{\nu}(T-t)]\big)\bigg\}. \nonumber
\end{eqnarray}

In order to employ results by \cite{KaratzasWang}, take $\omega \in \Omega$, $s \in [0,T-t]$, $y \in (0,\infty)$ and set
\beq
\label{KWingredients}
\left\{
\begin{array}{ll}
\xi_{\pm}(\omega,s):=\overline{\nu}_{\pm}(\omega,s), \qquad \qquad\,\,\, X(\omega,s):= y + \overline{\nu}(\omega,s) = y + \xi^{+}(\omega,s) - \xi^{-}(\omega,s), \\ \\
H(\omega,s,y):= -e^{-\mu_F s} R(y C^0(\omega,s)), \qquad G(\omega,y):= -e^{-\mu_F (T-t)}G(yC^0(\omega,T-t)), \\ \\
\gamma(\omega,s):= \frac{c_{+}}{f_C} e^{-\mu_F s} C^0(\omega,s)\mathds{1}_{\{s < T-t\}}, \qquad \nu(\omega,s) := - \frac{c_{-}}{f_C} e^{-\mu_F s} C^0(\omega,s)\mathds{1}_{\{s < T-t\}}.
\end{array}
\right.
\eeq
Notice that $H_y(\omega,s,y)$ is $d\mathbb{P} \otimes dt$-integrable for any $y>0$, thanks to concavity of $R$, whereas $G_y(\omega,y)$ is $d\mathbb{P}$-integrable by (\ref{derivatascrap}). Moreover $\mathbb{E}\{\sup_{0 \leq s \leq T-t}|\gamma(s)| + \sup_{0 \leq s \leq T-t}|\nu(s)|\} < \infty$.
Then, thanks to Theorem \ref{existence} and \cite{KaratzasWang}, Theorem $3.1$, we fit into \cite{KaratzasWang}, Theorem $3.2$ (with time-horizon $T-t$). It is important to remark that in \cite{KaratzasWang} the set of admissible controls does not require that the controlled process remains positive. However, the proof of Theorem $3.2$ therein is based on the construction of suitable perturbations of the optimal control $\xi_*$ (namely $\xi_\varepsilon$ in the proof of Lemma $4.2$ and $\vartheta_\varepsilon$ in the proof of Lemma $4.3$ therein); one may easily verify that such perturbations of the optimal control preserve positivity of the process provided that $\xi_*$ does.
\begin{proposition}
\label{2valuefunctions}
For $V_y:=\frac{\partial\,V}{\partial\,y}$ and under Assumption \ref{AssProfit} and \eqref{derivatascrap}, the value function $V(t,y)$ of the control problem (\ref{optimalproblem}) satisfies
%\beq
%\label{derivatavaluefunction}
$V_y(t,y) = v(t,y)$ for $(t,y) \in [0,T] \times (0, \infty)$,
%\eeq
with
\begin{align}\label{defv}
v(t,y):=\inf_{\sigma \in [0,T-t]}\sup_{\tau \in [0,T-t]}\Psi_0(t,y;\sigma,\tau)=\sup_{\tau \in [0,T-t]}\inf_{\sigma \in [0,T-t]}\Psi_0(t,y;\sigma,\tau)
\end{align}
and
\begin{align}\label{Jey0}
\hspace{-12pt}\Psi_0(t,y;\sigma,\tau):=&\mathbb{E}\bigg\{ \int_0^{\tau \wedge \sigma} e^{-\mu_F s} C^0(s) R_c(yC^0(s)) ds+ e^{-\mu_F (T-t)}C^0(\tau)G_c(yC^0(\tau))\mathds{1}_{\{\tau = \sigma = T-t\}}\nonumber\\
&\hspace{+15pt}+\frac{c_{+}}{f_C} e^{-\mu_F \sigma} C^0(\sigma) \mathds{1}_{\{\sigma \leq \tau\}}\mathds{1}_{\{\sigma < T-t\}}+ \frac{c_{-}}{f_C} e^{-\mu_F \tau} C^0(\tau) \mathds{1}_{\{\tau < \sigma\}} \bigg\}.
\end{align}
\end{proposition}

\noindent Here $v(t,y)$ is the value function of a zero-sum optimal stopping game (Dynkin game). Consider two players, $\mathcal{P}_1$ and $\mathcal{P}_2$, starting playing at time $t \in [0,T]$. Player $\mathcal{P}_1$ can choose the stopping time $\sigma$, whereas player $\mathcal{P}_2$ the stopping time $\tau$. The game ends as soon as one of the two players decides to stop, i.e.\ at the stopping time $\sigma \wedge \tau$. As long as the game is in progress, $\mathcal{P}_1$ keeps paying $\mathcal{P}_2$ at the (random) rate $e^{-\mu_F t} C^0(t) R_c(yC^0(t))$ per unit of time. When the game ends before $T-t$, $\mathcal{P}_1$ pays $\frac{c_+}{f_C}e^{-\mu_F \sigma} C^0(\sigma)$ if she/he decides to stop earlier than $\mathcal{P}_2$; otherwise $\mathcal{P}_1$ pays $\frac{c_{-}}{f_C}e^{-\mu_F \tau} C^0(\tau)$.
If no one decides to stop the game (i.e.\ the game ends at $T-t$), $\mathcal{P}_1$ pays $\mathcal{P}_2$ the (random) amount $e^{-\mu_F (T-t)}C^0(T-t)G_c(yC^0(T-t))$.
%It follows that the (random) total payment from $\mathcal{P}_1$ to $\mathcal{P}_2$ is
%\begin{eqnarray}
%\label{totalpayment}
%& & \frac{c_{+}}{f_C} e^{-\mu_F \sigma} C^0(\sigma) \mathds{1}_{\{\sigma \leq \tau\}}\mathds{1}_{\{\sigma < T-t\}} + \frac{c_{-}}{f_C} e^{-\mu_F \tau} C^0(\tau) \mathds{1}_{\{\tau < \sigma\}} \\
%& & \hspace{0.3cm} + \int_0^{\tau \wedge \sigma} e^{-\mu_F s} C^0(s) R_c(yC^0(s))\,ds + e^{-\mu_F (T-t)}C^0(T-t)G_c(yC^0(T-t))\mathds{1}_{\{\tau = \sigma = T-t\}}. \nonumber
%\end{eqnarray}
As it is natural, $\mathcal{P}_1$ tries to minimize her expected loss, whereas $\mathcal{P}_2$ tries to maximize it. We remark that Proposition \ref{2valuefunctions} implies existence of Stackelberg equilibrium (Nash equilibrium is also provided in \cite{KaratzasWang}).

\begin{remark}
Notice that in \cite{KaratzasWang}, Theorem 3.2, the instantaneous cost functions $\gamma$ and $\nu$ are both positive. This is not true in our setting, however reading carefully the proof of \cite{KaratzasWang}, Theorem 3.2, one can see that such condition is not necessary.
\end{remark}

Recall now $\tilde{\mathbb{P}}$ defined in (\ref{Girsanov}) and set $\tilde{W}(t): = W(t) - \sigma_C t$, $t \geq 0$. This process is a $\tilde{\mathbb{P}}$-Brownian motion and
%\beq\label{change}
$C^0(t)=\exp\{\hat{\mu}_Ct+\sigma_C\tilde{W}(t)\}$,
%\eeq
with $\hat{\mu}_C$ as in \eqref{mus}, under the new measure.
Then Girsanov Theorem allows us to rewrite $v(t,y)$ of (\ref{defv}) under $\tilde{\mathbb{P}}$ as
\begin{eqnarray}
\label{defv2}
v(t,y) \hspace{-0.2cm} & := & \hspace{-0.2cm} \inf_{\sigma \in [0,T-t]}\sup_{\tau \in [0,T-t]}\Psi(t,y;\sigma,\tau) = \sup_{\tau \in [0,T-t]} \inf_{\sigma \in [0,T-t]}\Psi(t,y;\sigma,\tau),
\end{eqnarray}
with
\begin{align}\label{Jey}
\Psi(t,y;\sigma,\tau)=\tilde{\mathbb{E}}\bigg\{ &\frac{c_{+}}{f_C} e^{-\bar{\mu} \sigma} \mathds{1}_{\{\sigma \leq \tau\}}\mathds{1}_{\{\sigma < T-t\}} + \frac{c_{-}}{f_C} e^{-\bar{\mu} \tau} \mathds{1}_{\{\tau < \sigma\}} \nonumber\\
& + e^{-\bar{\mu}(T-t)}G_c(yC^0(T-t))\mathds{1}_{\{\tau = \sigma = T-t\}} + \int_0^{\tau \wedge \sigma} e^{-\bar{\mu} s} R_c(yC^0(s)) ds \bigg\}
\end{align}
and, again, $\bar{\mu}:=\mu_C + \mu_F$. Notice that
%\beq
%\label{boundssuv}
$c_{-}/f_C \leq v(t,y) \leq c_{+}/f_C$
%\eeq
for all $(t,y) \in [0,T] \times (0,\infty)$.

From now on, our aim will be to characterize the optimal control $\nu^{*}$ for problem (\ref{optimalproblem}) in terms of the optimal strategy of the zero-sum game (\ref{defv2}). We expect the latter to be given by the first exit times $(\sigma^{*},\tau^{*})$ of the process $\{yC^0(s), s \geq 0\}$ from the region bounded between two moving boundaries denoted by $\hat{y}_{+}$ and $\hat{y}_{-}$, respectively. A satisfactory characterization of the free-boundaries is extremely hard to find in general unless the marginal scrap value $G_c$ coincides with either $\frac{c_+}{f_C}$ or $\frac{c_{-}}{f_C}$. That is a common assumption when addressing zero-sum optimal stopping games with variational methods (cf., e.g., \cite{Friedman}, Volume 2, Chapter $16$, Section $9$). We observe that if $G_c(C)=\frac{c_{+}}{f_C}$, the player who aims at maximizing $\Psi$ will choose a `no-action strategy' for $t > [T - \frac{1}{\bar{\mu}}\ln(\frac{c_+}{c_{-}})]^{+}$ regardless of the initial state $y$. In fact, an immediate stopping would get her/him a reward equal to $\frac{c_{-}}{f_C}$, whereas doing nothing would guarantee a payoff larger than $\frac{c_{+}}{f_C}e^{-\bar{\mu}(T-t)}$. Somehow this introduces an advantage for the `sup-player' as her/his strategy is known on a whole time interval before the end of the game. To avoid such a situation and to be consistent with \eqref{derivatascrap} we make the following

\begin{Assumptions}
\label{scrap}
One has $G(C) = G_0+\frac{c_{-}}{f_C}C$ for some $G_0\ge0$.
\end{Assumptions}

\begin{theorem}
\label{jointcontinuityv}
Under Assumptions \ref{AssProfit} and \ref{scrap} the value function $v(t,y)$ defined in (\ref{defv2}) is continuous on $[0,T] \times (0,\infty)$.
\end{theorem}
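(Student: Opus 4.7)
My plan is to establish joint continuity by combining (a) a uniform-in-$t$ modulus of continuity of $y \mapsto v(t,y)$ with (b) continuity of $t\mapsto v(t,y)$ for each fixed $y$, and then concluding by the triangle inequality
$|v(t_n,y_n)-v(t_0,y_0)|\le |v(t_n,y_n)-v(t_n,y_0)|+|v(t_n,y_0)-v(t_0,y_0)|$.

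Step (a) is the easier of the two. Under Assumption \ref{scrap} the marginal scrap value $G_c \equiv c_-/f_C$ is independent of $y$, so the only $y$-dependence of $\Psi(t,y;\sigma,\tau)$ sits in the integrand $e^{-\bar\mu s}R_c(yC^0(s))$. Because the game has a saddle value (the identity $\inf\sup=\sup\inf$ in \eqref{defv2}), the standard elementary inequality for min-max differences gives
\[
\bigl|v(t,y_1)-v(t,y_2)\bigr|\ \le\ \sup_{\sigma,\tau\in[0,T-t]}\bigl|\Psi(t,y_1;\sigma,\tau)-\Psi(t,y_2;\sigma,\tau)\bigr|\ \le\ \tilde{\mathbb{E}}\Bigl[\int_0^{T} e^{-\bar\mu s}\bigl|R_c(y_1C^0(s))-R_c(y_2C^0(s))\bigr|\,ds\Bigr].
\]
Since $R_c$ is continuous and monotonically decreasing, the integrand is dominated by $2 e^{-\bar\mu s} R_c(y_\ast C^0(s))$ for $y_1,y_2\ge y_\ast>0$, and the explicit representation \eqref{change} of $C^0$ allows one to check the $\tilde{\mathbb{P}}\otimes ds$-integrability of this dominator on compact subsets of $(0,\infty)$. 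Dominated convergence then delivers a continuity estimate in $y$ that is uniform in $t\in[0,T]$ and locally uniform in $y$.

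Step (b) is the core of the proof. I fix $y>0$ and $t_n\to t_0$, and argue by an $\varepsilon$-saddle point construction. Given $\varepsilon>0$, I choose $\sigma^\varepsilon\in[0,T-t_0]$ with $\sup_\tau\Psi(t_0,y;\sigma^\varepsilon,\tau)\le v(t_0,y)+\varepsilon$ and $\tau^\varepsilon\in[0,T-t_0]$ with $\inf_\sigma\Psi(t_0,y;\sigma,\tau^\varepsilon)\ge v(t_0,y)-\varepsilon$, and transport them to the $t_n$-problem via the truncations $\sigma^\varepsilon_n:=\sigma^\varepsilon\wedge(T-t_n)$, $\tau^\varepsilon_n:=\tau^\varepsilon\wedge(T-t_n)$; conversely I take $\varepsilon$-saddle strategies for $v(t_n,y)$ and truncate them at $T-t_0$. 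Using that $|T-t_n-(T-t_0)|=|t_n-t_0|\to 0$, the uniform continuity of $s\mapsto e^{-\bar\mu s}$, the a.s.\ continuity of $s\mapsto C^0(s)$, and the uniform bound $c_-/f_C\le v\le c_+/f_C$ from \eqref{boundssuv}, I aim to bound $|\Psi(t_0,y;\sigma^\varepsilon,\tau^\varepsilon)-\Psi(t_n,y;\sigma^\varepsilon_n,\tau^\varepsilon_n)|$ by a quantity $\rho_n$ that is independent of $\varepsilon$ and vanishes as $n\to\infty$. Sending $n\to\infty$ and then $\varepsilon\to 0$ yields $\lim_{n\to\infty}v(t_n,y)=v(t_0,y)$.

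The hard part will be controlling the indicator terms $\mathbf{1}_{\sigma\le\tau,\,\sigma<T-t}$, $\mathbf{1}_{\tau<\sigma}$ and $\mathbf{1}_{\sigma=\tau=T-t}$ under the horizon shift: a small perturbation of $t$ can flip these indicators on the event where a stopping time coincides with the horizon, and the difference between $R_c$-integrals over $[0,\sigma\wedge\tau]$ and over $[0,\sigma^\varepsilon_n\wedge\tau^\varepsilon_n]$ must be shown to be small uniformly in $\varepsilon$. To deal with this I will exploit that $\Psi$ is uniformly bounded on the exceptional events (by the absolute bound on $v$), that the time window on which the integrals disagree has length at most $|t_n-t_0|$, and that Assumption \ref{scrap} makes the terminal payoff $(c_-/f_C)e^{-\bar\mu(T-t)}$ and the continuation payoff $(c_-/f_C)e^{-\bar\mu\tau}$ coincide on $\{\tau=T-t\}$, so that the indicator ambiguity between $\mathbf{1}_{\tau<\sigma}$ and $\mathbf{1}_{\sigma=\tau=T-t}$ does not produce a jump in $\Psi$. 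Once this bookkeeping is in place, combining (a) and (b) via the triangle inequality gives joint continuity of $v$ on $[0,T]\times(0,\infty)$.
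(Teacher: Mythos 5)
Your Step (a) is sound: the elementary min-max inequality does reduce the $y$-increment of $v$, uniformly in $t$, to $\tilde{\mathbb{E}}\{\int_0^T e^{-\bar\mu s}|R_c(y_1C^0(s))-R_c(y_2C^0(s))|ds\}$, and Lemma \ref{stimaconcavitaR} supplies the dominating function. The gap is in Step (b), precisely at the point you flag as "the hard part", and your proposed remedies do not close it. The obstruction is the minimizer's indicator, not the maximizer's. Take $t_n\downarrow t_0$ and transport a near-optimal $\tau^\varepsilon$ for the sup-player at $t_0$ to the shorter game by truncation, $\tau^\varepsilon_n=\tau^\varepsilon\wedge(T-t_n)$. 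For the minimizer's reply $\sigma=T-t_n$ one finds, on the event $\{\tau^\varepsilon\ge T-t_n\}$, that the $t_n$-game terminates at its horizon with payoff $\frac{c_-}{f_C}e^{-\bar\mu(T-t_n)}$ while the $t_0$-game registers $\sigma<T-t_0$, $\sigma\le\tau^\varepsilon$ and pays $\frac{c_+}{f_C}e^{-\bar\mu(T-t_n)}$. The resulting error is $\frac{c_+-c_-}{f_C}e^{-\bar\mu(T-t_n)}\tilde{\mathbb{P}}(\tau^\varepsilon\ge T-t_n)$, and since near-optimal $\tau^\varepsilon$ typically satisfy $\tilde{\mathbb{P}}(\tau^\varepsilon=T-t_0)>0$ (the sup-player waits until the end with positive probability whenever $y$ is in the continuation region), this does not vanish as $n\to\infty$, let alone uniformly in $\varepsilon$. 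Your three remedies — boundedness of $\Psi$, the $O(|t_n-t_0|)$ window for the $R_c$-integrals, and the matching $G_c=c_-/f_C$ — control the running term and the \emph{sup}-player's payoff across the horizon, but not the jump from the terminal payoff $c_-/f_C$ to the inf-player's stopping cost $c_+/f_C$. The same defect appears in your reverse transport: a $\sigma^\varepsilon_n$ with $\sigma^\varepsilon_n=T-t_n$ ("do not stop") becomes, when used unchanged in the longer $t_0$-game, "stop strictly before the horizon and pay $c_+/f_C$", which the maximizer exploits. A further, more minor, logical slip: bounding $|\Psi(t_0,y;\sigma^\varepsilon,\tau^\varepsilon)-\Psi(t_n,y;\sigma^\varepsilon_n,\tau^\varepsilon_n)|$ at the specific pair does not bound $|v(t_n,y)-v(t_0,y)|$; one coordinate must always be kept as a free "for all".

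The argument can be repaired, but it requires an idea your plan lacks: transport \emph{only the minimizer's} strategies (using $v=\inf\sup$ for the upper bound on $v(t_n,y)$ and, for the lower bound, an upper bound on $v(t_0,y)$ obtained from a near-optimal $\sigma$ of the $t_n$-game), and when passing to the \emph{longer} horizon replace the event $\{\sigma=\text{old horizon}\}$ by $\{\sigma=\text{new horizon}\}$ rather than keeping $\sigma$ fixed. With that convention every indicator mismatch involves only the $c_-$-payoffs (which match the terminal payoff by Assumption \ref{scrap}) plus a running integral over a window of length $|t_n-t_0|$, and the error is then genuinely independent of $\varepsilon$. Note also that the paper proceeds entirely differently: it introduces penalized problems whose solutions $u^\epsilon$ are continuous by a fixed-point construction in a weighted sup-norm space and converge to $v$ uniformly on $[0,T]\times[\delta,\infty)$, which sidesteps the horizon-indicator issue altogether.
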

The full proof of this Theorem is quite technical and it is contained in Section \ref{Appcontinuity}. It follows by a non-trivial extension to the present setting of arguments developed in \cite{Stettner}, Theorem $1$, for bounded running profits. Since our $R_c$ is unbounded we need to find the correct functional space where methods similar to those in \cite{Stettner} can be suitably adapted. Note that techniques analogous to those of \cite{Stettner} were also employed by Menaldi, among others, in an earlier paper (cf.\ \cite{Menaldi}) in the study of an optimal stopping problem for degenerate diffusions. In the context of game Call options, the proof of continuity of the value function crucially relies on the Lipschitz continuity of the payoff function (cf.~\cite{KunitaSeko}, Lemma 5.1, or \cite{YYZ12}, Lemma 3.1) which clearly breaks down in our setting.
%Continuity of $v(t,y)$ on $[0,T] \times (0,\infty)$ is indeed obtained by introducing suitable penalized problems, by showing that their solutions $u^{\epsilon}$, $\epsilon >0$, are continuous and that they converge uniformly to $v$ as $\epsilon \downarrow 0$ on compact subsets of $[0,T] \times (0,\infty)$.

\begin{theorem}
\label{thmsaddlepointsv}
Under Assumptions \ref{AssProfit} and \ref{scrap} the stopping times
\beq
\label{stoppingtimesv}
\left\{
\begin{array}{ll}
\sigma^*(t,y):=\inf\{s \in [0,T-t): v(t+s, yC^0(s))\ge \frac{c_{+}}{f_C}\} \wedge (T-t), \\ \\
\tau^*(t,y):=\inf\{s \in [0,T-t): v(t+s, yC^0(s))\le \frac{c_{-}}{f_C}\} \wedge (T-t),
\end{array}
\right.
\eeq
are a saddle point for the zero-sum game (\ref{defv2}).
\end{theorem}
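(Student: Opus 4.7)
The strategy is to combine continuity of $v$ (Theorem \ref{jointcontinuityv}) with classical (sub-/super-)martingale arguments for Dynkin games. Set $Y^y_s:=yC^0(s)$ and, for fixed $(t,y)\in[0,T]\times(0,\infty)$, introduce the gain process
\begin{equation*}
N_s:=\int_0^s e^{-\bar{\mu} u}R_c(Y^y_u)\,du + e^{-\bar{\mu} s} v(t+s, Y^y_s),\qquad s\in[0,T-t].
\end{equation*}
By continuity of $v$ and the bounds (\ref{boundssuv}), the stopping sets $\mathcal{D}_+:=\{v=c_+/f_C\}$ and $\mathcal{D}_-:=\{v=c_-/f_C\}$ are closed and disjoint subsets of $[0,T]\times(0,\infty)$, so $\sigma^*$ and $\tau^*$ are well-defined $\mathbb{F}$-stopping times in $[0,T-t]$. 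The plan is to show that $(N_{s\wedge\sigma^*})_s$ is a $\tilde{\mathbb{P}}$-supermartingale and $(N_{s\wedge\tau^*})_s$ is a $\tilde{\mathbb{P}}$-submartingale, and then to conclude by optional sampling together with a pointwise boundary comparison.

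I would obtain the two martingale properties by passing to the limit $\epsilon\downarrow 0$ in the penalized approximations $u^\epsilon\to v$ constructed in Section \ref{Appcontinuity}. For each $\epsilon>0$, $u^\epsilon$ is smooth enough that It\^{o}'s formula applied to the analogous process $N^\epsilon$, combined with the semilinear PDE satisfied by $u^\epsilon$, delivers the corresponding (super-/sub-)martingale properties of $N^\epsilon$ at the stopping times associated with $u^\epsilon$. Uniform convergence of $u^\epsilon$ to $v$ on compact subsets, the uniform bound (\ref{boundssuv}), and dominated convergence then transfer these properties to $N$ at $\sigma^*$ and $\tau^*$. Alternatively one could appeal directly to the Snell-envelope theory for Dynkin games with continuous bounded value function developed in \cite{CvitanicKaratzas} or \cite{PeskirEkstrom}, which bypasses any regularity assumption on $v$.

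For the saddle-point conclusion, fix an arbitrary stopping time $\tau\in[0,T-t]$. A short case analysis shows that $N_{\sigma^*\wedge\tau}$ dominates the integrand of $\Psi(t,y;\sigma^*,\tau)$ pointwise: on $\{\sigma^*\leq\tau\}\cap\{\sigma^*<T-t\}$, continuity of $v$ and the definition of $\sigma^*$ force $v(t+\sigma^*,Y^y_{\sigma^*})=c_+/f_C$; on $\{\tau<\sigma^*\}$, the bound (\ref{boundssuv}) gives $v(t+\tau,Y^y_\tau)\geq c_-/f_C$; and on $\{\sigma^*=\tau=T-t\}$, the terminal identity $v(T,\cdot)=G_c=c_-/f_C$ (valid under Assumption \ref{scrap}) matches the scrap term. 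Taking expectations and using the supermartingale property yields $\Psi(t,y;\sigma^*,\tau)\leq\tilde{\mathbb{E}}[N_{\sigma^*\wedge\tau}]\leq N_0=v(t,y)$. A symmetric argument based on the submartingale property of $(N_{s\wedge\tau^*})_s$ gives $\Psi(t,y;\sigma,\tau^*)\geq v(t,y)$ for every $\sigma$, and evaluating both inequalities at $(\sigma^*,\tau^*)$ proves that $(\sigma^*,\tau^*)$ is a saddle point. The main obstacle is the first step: $v$ is not a priori $C^{1,2}$, so a direct It\^{o}-based argument is unavailable; the penalization from Section \ref{Appcontinuity} is the device that overcomes this regularity gap.
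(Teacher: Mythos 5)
Your proposal follows essentially the same route as the paper: penalize, establish (super-/sub-)martingale properties of the gain process stopped at the candidate times, pass to the limit, and conclude by the pointwise boundary comparison. The two martingale properties you assert are exactly items \textit{i)} and \textit{ii)} of Proposition \ref{semiharmonic}, which the paper extracts as a byproduct of this very proof, and your final case analysis reproduces \eqref{v00}, \eqref{semarm03} and \eqref{semarm05}.

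Two points in your first step are glossed over and are in fact where the real work lies. First, you cannot apply It\^o's formula to $N^\epsilon$: the penalized solution $u^\epsilon$ of Proposition \ref{uepsiloncontinuabounded} is constructed only as a fixed point in $\mathcal{C}^w_b$ and solves \eqref{penalized1} in the \emph{martingale sense} of Definition \ref{martingalesense}; no $C^{1,2}$ regularity is established, and the martingale identity \eqref{uepsilondamgproperty} is what replaces It\^o's formula. Second, ``dominated convergence transfers these properties to $N$ at $\sigma^*$ and $\tau^*$'' hides the essential difficulty: the identity for $u^\epsilon$ can only be exploited up to the approximate times $\sigma^\epsilon,\tau^\epsilon$ of \eqref{approxstopping} (where the sign-definite penalty terms can be dropped), because the term $\tfrac{1}{\epsilon}(u^\epsilon-\tfrac{c_+}{f_C})^+$ need not vanish before $\sigma^*$ and would blow up as $\epsilon\downarrow0$. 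One must therefore prove $\sigma^*\wedge\sigma^\epsilon\wedge\tau_\delta\to\sigma^*\wedge\tau_\delta$ (and similarly for $\tau^*$), which the paper does via the localization time $\tau_\delta$ of \eqref{taudelta} --- needed because $u^\epsilon\to v$ uniformly only on $[0,T]\times[\delta,\infty)$ (Remark \ref{rem-unifconv}) --- and the argument of \cite{CDA}, Lemma 6.2, followed by a further limit $\delta\downarrow0$ using \eqref{tdelta}. With these two devices supplied, your outline coincides with the paper's proof; your alternative suggestion of invoking the general Markovian Dynkin-game results of \cite{PeskirEkstrom} is a legitimate shortcut that the paper does not take.
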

Theorem \ref{thmsaddlepointsv} is proved in Section \ref{saddlepointsvproof}. As a natural byproduct of its proof we obtain the following
\begin{proposition}
\label{semiharmonic}
Take $(t,y)\in[0,T]\times (0,\infty)$ arbitrary but fixed and let $\rho\in[0,T-t]$ be any stopping time. Then under Assumptions \ref{AssProfit} and \ref{scrap} the value function $v$ satisfies
\begin{align}
\vspace{+10pt}
\hspace{-6pt}i)\:\:v(t,y) &\leq \tilde{\mathbb{E}}\bigg\{e^{-\bar{\mu}(\rho\wedge \tau^*)}v(t\hspace{-1pt} +\hspace{-1pt} \rho\hspace{-1pt} \wedge\hspace{-1pt} \tau^*, yC^0(\rho\hspace{-1pt} \wedge\hspace{-1pt} \tau^*))\hspace{-1pt} + \hspace{-1pt}\int_{0}^{\rho \wedge \tau^*}\hspace{-6pt}e^{-\bar{\mu}s} R_c(yC^0(s))\,ds\bigg\} \label{1}\\
\vspace{+10pt}
\hspace{-6pt}ii)\:\:v(t,y) &\geq \tilde{\mathbb{E}}\bigg\{e^{-\bar{\mu}(\sigma^* \wedge \rho)}v(t \hspace{-1pt}+\hspace{-1pt} \sigma^*\hspace{-1pt} \wedge\hspace{-1pt} \rho, yC^0(\sigma^*\hspace{-1pt} \wedge\hspace{-1pt} \rho))\hspace{-1pt} +\hspace{-1pt} \int_{0}^{\sigma^* \wedge \rho}\hspace{-6pt}e^{-\bar{\mu}s} R_c(yC^0(s))\,ds\bigg\}\label{2}\\
\vspace{+10pt}
\hspace{-6pt}iii)\:\:v(t,y) &= \tilde{\mathbb{E}}\bigg\{e^{-\bar{\mu}(\rho\wedge\sigma^*\hspace{-2pt} \wedge \tau^*)}v(t\hspace{-1pt} +\hspace{-1pt} \rho\hspace{-1pt}\wedge\hspace{-1pt}\sigma^*\hspace{-3pt} \wedge\hspace{-1pt} \tau^*, yC^0(\rho\hspace{-1pt}\wedge\hspace{-1pt}\sigma^*\hspace{-3pt} \wedge\hspace{-1pt} \tau^*))\hspace{-1pt} +\hspace{-1pt} \int_{0}^{\rho\wedge\sigma^*\hspace{-1pt} \wedge \tau^*}\hspace{-10pt}e^{-\bar{\mu}s} R_c(yC^0(s))ds\bigg\}\label{3}
\end{align}
\end{proposition}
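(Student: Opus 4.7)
The plan is to derive (i)--(iii) from the saddle-point property $v(t,y)=\Psi(t,y;\sigma^*,\tau^*)$ established in Theorem \ref{thmsaddlepointsv} together with the strong Markov property of $C^0$ under $\tilde{\mathbb{P}}$. I would prove (iii) first and then obtain (i) and (ii) from it using the bounds $c_-/f_C\le v\le c_+/f_C$ of \eqref{boundssuv}.

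For (iii), fix any stopping time $\rho\in[0,T-t]$ and set $\theta:=\rho\wedge\sigma^*\wedge\tau^*$. I would split the expectation defining $\Psi(t,y;\sigma^*,\tau^*)$ over the disjoint events $\{\sigma^*\wedge\tau^*\le\rho\}$ and $\{\rho<\sigma^*\wedge\tau^*\}$. On the first event $\theta=\sigma^*\wedge\tau^*$; using continuity of $v$ (Theorem \ref{jointcontinuityv}) together with \eqref{stoppingtimesv} and Assumption \ref{scrap} one has $v(t+\sigma^*,yC^0(\sigma^*))=c_+/f_C$ on $\{\sigma^*<T-t\}$, $v(t+\tau^*,yC^0(\tau^*))=c_-/f_C$ on $\{\tau^*<T-t\}$, and $v(T,yC^0(T-t))=G_c(yC^0(T-t))=c_-/f_C$, so the stopping/terminal payoffs in $\Psi$ collapse exactly into $e^{-\bar{\mu}\theta}v(t+\theta,yC^0(\theta))$. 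On the event $\{\rho<\sigma^*\wedge\tau^*\}$, by the time-homogeneity of $C^0$ under $\tilde{\mathbb{P}}$ (cf.\ \eqref{change}) and the strong Markov property, the shifted random times $\sigma^*-\rho$ and $\tau^*-\rho$ form the saddle point for the game restarted at $(t+\rho,yC^0(\rho))$, so the $\mathcal{F}_\rho$-conditional expectation of the residual payoff equals $v(t+\rho,yC^0(\rho))$. Summing the two contributions and splitting the running cost as $\int_0^{\sigma^*\wedge\tau^*}=\int_0^\theta+\int_\theta^{\sigma^*\wedge\tau^*}$ (with the tail piece absorbed by the conditional expectation on the second event) produces (iii).

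Inequality (ii) is then obtained by applying (iii) with $\rho$ replaced by $\rho\wedge\sigma^*$ and inspecting the event $\{\tau^*<\rho\wedge\sigma^*\}$, where the identity contributes $e^{-\bar{\mu}\tau^*}(c_-/f_C)+\int_0^{\tau^*}e^{-\bar{\mu}s}R_c(yC^0(s))\,ds$; conditioning at $\tau^*$ and using $c_-/f_C\le v$ together with $R_c\ge 0$ shows this contribution is dominated by $e^{-\bar{\mu}(\rho\wedge\sigma^*)}v(t+\rho\wedge\sigma^*,yC^0(\rho\wedge\sigma^*))+\int_0^{\rho\wedge\sigma^*}e^{-\bar{\mu}s}R_c(yC^0(s))\,ds$, yielding the required lower bound. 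Inequality (i) follows by a perfectly symmetric argument, applied with $\rho\wedge\tau^*$ in place of $\rho$ and using $v\le c_+/f_C$ on the event $\{\sigma^*<\rho\wedge\tau^*\}$ to obtain the reversed inequality.

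The main obstacle will be the restart argument on $\{\rho<\sigma^*\wedge\tau^*\}$: one must verify that, conditionally on $\mathcal{F}_\rho$, the post-$\rho$ portions of $\sigma^*$ and $\tau^*$ coincide with the first-entry times into $\{v\ge c_+/f_C\}$ and $\{v\le c_-/f_C\}$ for the trajectory restarted at $(t+\rho,yC^0(\rho))$. This is guaranteed by the time-homogeneous Markov structure of $C^0$ under $\tilde{\mathbb{P}}$ together with the continuity of $v$ of Theorem \ref{jointcontinuityv}, which makes the relevant level sets closed, and it is precisely this restart mechanism that is exploited in the proof of Theorem \ref{thmsaddlepointsv}; this is why the Proposition emerges as a byproduct of that proof.
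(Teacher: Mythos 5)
Your route is genuinely different from the paper's: the paper derives all three relations from the penalized problems of Section \ref{Appcontinuity}, passing to the limit in the martingale identity \eqref{uepsilondamgproperty} after discarding the penalty term of the appropriate sign (this is precisely \eqref{semarm01}--\eqref{semarm04} and \eqref{ueps01}--\eqref{ueps03}), whereas you start from the saddle-point identity $v=\Psi(t,y;\sigma^*,\tau^*)$ of Theorem \ref{thmsaddlepointsv} and a strong-Markov restart at $\rho$. For part $iii)$ your argument is viable in principle (modulo the flow-property and measurability issues you yourself flag). However, the step from $iii)$ to $i)$ and $ii)$ does not work: the inequalities point the wrong way. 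For $ii)$, write $A$ for the integrand of $iii)$ with $\theta=\rho\wedge\sigma^*\wedge\tau^*$ and $B$ for the integrand of $ii)$ at $\rho\wedge\sigma^*$; you must show $\tilde{\mathbb{E}}\{A\}\ge\tilde{\mathbb{E}}\{B\}$. On $\{\tau^*<\rho\wedge\sigma^*\}$ one has
\begin{equation*}
A-B=e^{-\bar{\mu}\tau^*}\frac{c_-}{f_C}-e^{-\bar{\mu}(\rho\wedge\sigma^*)}v\big(t+\rho\wedge\sigma^*,yC^0(\rho\wedge\sigma^*)\big)-\int_{\tau^*}^{\rho\wedge\sigma^*}e^{-\bar{\mu}s}R_c(yC^0(s))\,ds,
\end{equation*}
and the bounds $v\ge c_-/f_C$ and $R_c\ge0$ only give an \emph{upper} bound on $A-B$, whereas you need $\tilde{\mathbb{E}}\{(A-B)\mathds{1}_{\{\tau^*<\rho\wedge\sigma^*\}}\}\ge0$. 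The symmetric problem occurs for $i)$ when you invoke $v\le c_+/f_C$ on $\{\sigma^*<\rho\wedge\tau^*\}$.

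The failure is not a matter of presentation: $i)$ and $ii)$ contain strictly more information than $iii)$ together with \eqref{boundssuv}. They are the sub/supermartingale properties of $e^{-\bar{\mu}s}v+\int_0^s e^{-\bar{\mu}r}R_c\,dr$ \emph{inside} the stopping regions $\mathcal{S}_+$ and $\mathcal{S}_-$ (where the process keeps running until $\tau^*$, resp.\ $\sigma^*$), and there they amount to $R_c(y)\ge\bar{\mu}c_+/f_C$ on $\mathcal{S}_+$ and $R_c(y)\le\bar{\mu}c_-/f_C$ on $\mathcal{S}_-$, i.e.\ the second and third lines of \eqref{freeb-pr}, which in turn rest on the location of the boundaries relative to $R_c^{-1}(\bar{\mu}c_\pm/f_C)$. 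You must therefore either prove these one-sided dynamic programming inequalities separately (e.g.\ via a DPP for the game, or via a verification argument once \eqref{freeb-pr} is in hand) or follow the paper and read them off the penalized identity \eqref{uepsilondamgproperty}: before $\tau^\epsilon$ the penalty $(c_-/f_C-u^\epsilon)^+$ vanishes while $-\frac{1}{\epsilon}(u^\epsilon-c_+/f_C)^+\le0$ can be dropped, which yields \eqref{semarm01} and hence $i)$ in the limit $\epsilon\downarrow0$, $\delta\downarrow0$; the mirror argument gives $ii)$.
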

\begin{proof}
Inequalities $i)$ and $ii)$ are direct consequences of \eqref{semarm02} and \eqref{semarm04}, respectively. Equality $iii)$ follows by arguments as those used to take limits in \eqref{ueps01}.
\end{proof}
The above characterization of the value function was also found via purely probabilistic methods in \cite{Pes09} (under general assumptions) and, in that paper, properties $i),ii)$ and $iii)$ were referred to as \emph{semi-harmonic} characterization of $v$.

\begin{proposition}
\label{primeproprietav}
Under Assumptions \ref{AssProfit} and \ref{scrap} the value function $v(t,y)$ is
\begin{enumerate}
	\item decreasing in $y$ for each $t \in [0,T]$;
	\item decreasing in $t$ for each $y \in (0,\infty)$.
\end{enumerate}
\end{proposition}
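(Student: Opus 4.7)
The plan is to read off both monotonicities directly from the payoff expression $\Psi(t,y;\sigma,\tau)$ in \eqref{Jey}, exploiting Assumption \ref{scrap} which collapses $G_c(yC^0(T-t))$ to the constant $c_-/f_C$ and thereby removes the $y$-dependence of the terminal term. For item (1), once this simplification is made the only $y$-dependence of $\Psi$ resides in the running integrand $R_c(yC^0(s))$; by Assumption \ref{AssProfit} the marginal profit $R_c$ is non-increasing, hence $y\mapsto \Psi(t,y;\sigma,\tau)$ is non-increasing $\mathbb{P}$-a.s.\ for every fixed pair $(\sigma,\tau)$. Taking $\inf_\sigma\sup_\tau$ preserves pointwise monotonicity, so $v(t,\cdot)$ is non-increasing.

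For item (2) I fix $y>0$, $0\le t_1<t_2\le T$, set $h_i:=T-t_i$, and let $\mathcal{T}_h$ denote the set of $\mathbb{F}$-stopping times with values in $[0,h]$. By Theorem \ref{thmsaddlepointsv} there is a saddle point $(\sigma^*_2,\tau^*_2)\in\mathcal{T}_{h_2}\times\mathcal{T}_{h_2}$ for the game started at $t_2$, and since $h_2<h_1$ one has $\mathcal{T}_{h_2}\subset\mathcal{T}_{h_1}$. Using $\tau^*_2$ as an admissible choice in the outer supremum of the sup-inf representation of $v(t_1,y)$ in \eqref{defv2},
\begin{equation*}
v(t_1,y)\ \ge\ \inf_{\sigma\in\mathcal{T}_{h_1}}\Psi(t_1,y;\sigma,\tau^*_2).
\end{equation*}
I will prove the pathwise comparison
\begin{equation*}
\Psi(t_1,y;\sigma,\tau^*_2)\ \ge\ \Psi(t_2,y;\sigma\wedge h_2,\tau^*_2)\qquad\text{for every } \sigma\in\mathcal{T}_{h_1}.
\end{equation*}
Together with the identity $\{\sigma\wedge h_2:\sigma\in\mathcal{T}_{h_1}\}=\mathcal{T}_{h_2}$ and the optimality of $\sigma^*_2$ against $\tau^*_2$ in the $t_2$-game, this gives
\begin{equation*}
v(t_1,y)\ \ge\ \inf_{\tilde\sigma\in\mathcal{T}_{h_2}}\Psi(t_2,y;\tilde\sigma,\tau^*_2)\ =\ \Psi(t_2,y;\sigma^*_2,\tau^*_2)\ =\ v(t_2,y),
\end{equation*}
which is the claim.

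The pathwise comparison is the heart of the argument and is settled by simple bookkeeping on the integrand inside the $\tilde{\mathbb{E}}$ in \eqref{Jey}. The running integrals coincide because $\tau^*_2\le h_2$ forces $(\sigma\wedge h_2)\wedge\tau^*_2=\sigma\wedge\tau^*_2$, and the terminal indicator $\mathds{1}_{\{\sigma=\tau^*_2=h_1\}}$ in $\Psi(t_1,\cdot)$ vanishes identically since $\tau^*_2\le h_2<h_1$. A split on the order of $\sigma$, $\tau^*_2$ and $h_2$ then shows that the two $c_\pm$ stopping payments agree on every sample path, with a single exceptional event $\{\sigma=\tau^*_2=h_2\}$ on which $\Psi(t_1,\cdot)$ pays the interior amount $(c_+/f_C)e^{-\bar{\mu} h_2}$ (because there $\sigma\le\tau^*_2$ and $\sigma<h_1$), whereas $\Psi(t_2,\cdot)$ delivers only the terminal $(c_-/f_C)e^{-\bar{\mu} h_2}$; the inequality $c_+>c_-$ disposes of this case. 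This indicator bookkeeping at the boundary $\sigma=h_2$ is the only mildly delicate point of the proof; I do not anticipate any substantive obstacle beyond it.
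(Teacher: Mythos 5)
Your proposal is correct, and both parts take a route genuinely different from the paper's. For part (1) the paper cross-substitutes the optimal stopping times of the two games (optimal $\tau^*_1$ for $y_1$ against $\sigma^*_2$ for $y_2$) so that the stopping payments cancel and only the difference of running integrals survives; you instead observe that, under Assumption \ref{scrap}, $y\mapsto\Psi(t,y;\sigma,\tau)$ is pointwise non-increasing for every fixed pair $(\sigma,\tau)$ and that $\inf_\sigma\sup_\tau$ preserves this order -- a shorter argument that does not even require the saddle point. For part (2) the paper works with the shifted value function $v^\theta$, the stopping time $\rho_\theta=\sigma^*\wedge\tau^*_\theta$ and the semi-harmonic inequalities of Proposition \ref{semiharmonic} (i.e.\ the sub/supermartingale properties of the value process), concluding by inspecting $v^\theta-v$ on the three events $\{\rho_\theta=T-t-\theta\}$, $\{\rho_\theta=\tau^*_\theta\}$, $\{\rho_\theta=\sigma^*\}$; you instead compare the two games directly at the level of the payoff $\Psi$, truncating the inf-player's stopping time at $h_2=T-t_2$ and plugging the $t_2$-optimal $\tau^*_2$ into the sup of the $t_1$-game. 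Your pathwise bookkeeping is accurate: the running integrals coincide since $\tau^*_2\le h_2$, the terminal indicator of the $t_1$-game vanishes, and the only discrepancy occurs on $\{\sigma=\tau^*_2=h_2\}$, where the $t_1$-game pays $c_+/f_C$ and the $t_2$-game pays $c_-/f_C$, so $c_+>c_-$ closes the gap (note that on the event $\{\tau^*_2=h_2<\sigma\}$ the payment migrates from the $\mathds{1}_{\{\tau<\sigma\}}$ term to the terminal term, but the amounts agree). What each approach buys: yours needs only the existence of a saddle point (Theorem \ref{thmsaddlepointsv}) and the explicit form of $\Psi$, and it isolates exactly where the ordering $c_+>c_-$ enters; the paper's argument reuses the dynamic-programming machinery of Proposition \ref{semiharmonic}, which it has to establish anyway for later sections, and generalizes more readily to settings where a pathwise comparison of payoffs is unavailable.
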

\begin{proof}

\begin{enumerate}
\item Fix $t\in [0,T]$ and $y_1 > y_2 > 0$. Let $(\sigma^{*}_1, \tau^{*}_1)$ be optimal for $(t,y_1)$ and $(\sigma^{*}_2, \tau^{*}_2)$ be optimal for $(t,y_2)$ and adopt the sub-optimal strategy $(\sigma^{*}_2, \tau^{*}_1)$ in both the optimization problems of value functions $v(t,y_1)$ and $v(t,y_2)$. Since $R_c(\cdot)$ is decreasing we have
%. By definition of $v(t,y)$ (cf.\ (\ref{defv2})) we have
%\begin{eqnarray*}
%\label{decrescenzavy1}
%& & v(t,y_1) -  v(t, y_2) \leq \tilde{\mathbb{E}}\bigg\{\frac{c_{+}}{f_C} e^{-\bar{\mu} \sigma}\mathds{1}_{\{\sigma \leq \tau^{*}_1\}}\mathds{1}_{\{\sigma < T-t\}} + \frac{c_{-}}{f_C} e^{-\bar{\mu} \tau^{*}_1}\mathds{1}_{\{\tau^{*}_1 < \sigma\}} + e^{-\bar{\mu}(T-t)}\frac{c_{-}}{f_C}\mathds{1}_{\{\tau^{*}_1 = \sigma = T-t\}}\nonumber \\
%& & +\, \int_0^{\tau^{*}_1 \wedge \sigma} e^{-\bar{\mu} s} R_c(y_1C^0(s)) ds\bigg \} - \tilde{\mathbb{E}}\bigg\{\frac{c_{+}}{f_C} e^{-\bar{\mu} \sigma^{*}_2}\mathds{1}_{\{\sigma^{*}_2 \leq \tau\}}\mathds{1}_{\{\sigma^{*}_2 < T-t\}} + \frac{c_{-}}{f_C} e^{-\bar{\mu} \tau}\mathds{1}_{\{\tau < \sigma^{*}_2\}} \nonumber \\
%& & +\, e^{-\bar{\mu}(T-t)}\frac{c_-}{f_C}\mathds{1}_{\{\sigma^{*}_2 = \tau = T-t\}} + \int_0^{\sigma^{*}_2 \wedge \tau} e^{-\bar{\mu} s} R_c(y_2C^0(s)) ds \bigg \}, \nonumber
%\end{eqnarray*}
%for any $\sigma$ and $\tau$ in $[0,T-t]$.
%If we now set $\sigma:=\sigma^{*}_2$ and $\tau :=\tau^{*}_1$, then we have
\begin{eqnarray*}
%\label{decrescenzavy2}
v(t,y_1) - v(t, y_2) \leq \tilde{\mathbb{E}}\bigg\{\int_0^{\tau^{*}_1 \wedge \sigma^{*}_2} e^{-\bar{\mu} s}\Big[R_c(y_1C^0(s)) - R_c(y_2C^0(s))\Big] ds \bigg\} \leq 0. \nonumber
\end{eqnarray*}

\item Given $(t,y) \in [0,T]\times (0,\infty)$, for fixed $\theta \in [0,T-t]$ we define the `$\theta$-shifted' value function as $v^{\theta}(t,y):=v(t+\theta, y)$. Introduce the stopping time
\beq
\label{rhodelta}
\tau^{*}_{\theta}:=\inf\{s \in [0,T-t-\theta):\,\,v^{\theta}(t+s, yC^0(s)) \leq \frac{c_{-}}{f_C}\} \wedge (T-t-\theta),
\eeq
and note that it is optimal for the sup-problem in $v^{\theta}$. Recalling (\ref{stoppingtimesv}) and setting $\rho_{\theta}:=\sigma^{*} \wedge \tau^{*}_{\theta}$, then we obtain
\beq
\label{stimavmenovdelta}
\tilde{\mathbb{E}}\Big\{e^{-\bar{\mu}\rho_{\theta}}\Big[v^{\theta}(t+\rho_{\theta}, yC^0(\rho_{\theta})) - v(t+\rho_{\theta}, yC^0(\rho_{\theta}))\Big]\Big\} \geq v^{\theta}(t,y) - v(t,y),
\eeq
by (\ref{1}) and (\ref{2}).
In order to show that the left-hand side of (\ref{stimavmenovdelta}) is negative, notice that
\begin{itemize}
	\item on $\{\rho_{\theta}= T-t-\theta\}:$ $v^{\theta}(T-\theta, yC^0(T-t-\theta)) = v(T, yC^0(T-t-\theta))=\frac{c_{-}}{f_C}$ and, on the other hand, $v(T-\theta, yC^0(T-t-\theta))\geq \frac{c_{-}}{f_C}$.
	\item on $\{\rho_{\theta}= \tau^{*}_{\theta}\} \bigcap \{\rho_{\theta} < T - t-\theta\}:$ $v^{\theta}(t + \tau^{*}_{\theta}, yC^0(\tau^{*}_{\theta}))=\frac{c_{-}}{f_C}$ and $v(t + \tau^{*}_{\theta}, yC^0(\tau^{*}_{\theta}) \geq \frac{c_{-}}{f_C}$.
	\item on $\{\rho_{\theta}= \sigma^{*}\} \bigcap \{\rho_{\theta} < T - t- \theta\}:$ $v^{\theta}(t + \sigma^{*}, yC^0(\sigma^{*})) \leq \frac{c_{+}}{f_C}$ and $v(t + \sigma^{*}, yC^0(\sigma^{*})) = \frac{c_{+}}{f_C}$.
\end{itemize}
It thus follows that $v(t+\theta,y) \leq v(t,y)$ for any $\theta \in [0,T-t]$ by (\ref{stimavmenovdelta}).
\end{enumerate}
\end{proof}

We notice that in the context of game Call options a proof of the time-monotonicity of the value function somehow related to ours is given in \cite{KunitaSeko}, proof of Lemma 3.3. Authors there compare their payoff functions evaluated at different initial times by picking appropriate stopping times $(\sigma,\tau)$ and by relying on pathwise properties of the underlying process and its time-homogeneity.

We now define the \textsl{continuation region} $\mathcal{C}$ and the two \textsl{stopping regions} $\mathcal{S}_{+}$, $\mathcal{S}_{-}$, by
\begin{align*}
\mathcal{C}\hspace{-1pt}:=\hspace{-1pt}\big\{(t,y)\hspace{-1pt} \in \hspace{-1pt}[0,T]\hspace{-1pt} \times\hspace{-1pt} (0,\infty) : \frac{c_{-}}{f_C}\hspace{-1pt} < \hspace{-1pt}v(t,y)\hspace{-1pt} <\hspace{-1pt} \frac{c_{+}}{f_C}\big\}\:\:\text{and}\:\:
\mathcal{S}_{\pm}\hspace{-1pt}:=\hspace{-1pt}\big\{(t,y)\hspace{-1pt} \in \hspace{-1pt}[0,T] \hspace{-1pt}\times\hspace{-1pt} (0,\infty) : v(t,y)\hspace{-1pt} = \hspace{-1pt}\frac{c_{\pm}}{f_C}\big\}.
\end{align*}
%\end{align*}
%\begin{eqnarray*}
%\mathcal{S}_{-}\hspace{-0.3cm}&:=&\hspace{-0.3cm}\Big\{(t,y) \in [0,T] \times (0,\infty) : v(t,y) = \frac{c_{-}}{f_C}\Big\}.
%\end{eqnarray*}
\noindent Notice that $\mathcal{C}$ is an open subset of $[0,T] \times (0,\infty)$ and $\mathcal{S}_{+}, \mathcal{S}_{-}$ are closed ones, due to continuity of $v$ (cf.\ Theorem \ref{jointcontinuityv})
Moreover, for $t \in [0,T]$ fixed, denote by $\mathcal{C}_t := \{y \in (0,\infty) : \frac{c_{-}}{f_C} < v(t,y) < \frac{c_{+}}{f_C}\}$ the $t$-section of the continuation region. Analogously, we introduce the $t$-sections $\mathcal{S}_{+,t}$, $\mathcal{S}_{-,t}$ of the two stopping regions. The fact that $\mathcal{C}$ is open and 1.\ of Proposition \ref{primeproprietav} imply
\begin{proposition}
\label{proprietacontinuazione}
Let Assumptions \ref{AssProfit} and \ref{scrap} hold. Then, for any $t \in [0,T]$, there exist $\hat{y}_{+}(t) < \hat{y}_{-}(t)$ such that $\mathcal{C}_t=(\hat{y}_{+}(t), \hat{y}_{-}(t)) \subset [0,\infty]$, $\mathcal{S}_{+,t}=[0,\hat{y}_{+}(t)]$ and $\mathcal{S}_{-,t}=[\hat{y}_{-}(t),\infty]$.
\end{proposition}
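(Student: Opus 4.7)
The plan is to read off the structure of $\mathcal{C}_t$, $\mathcal{S}_{+,t}$, $\mathcal{S}_{-,t}$ purely from monotonicity of $y\mapsto v(t,y)$ (Proposition \ref{primeproprietav}, item $1$), continuity of $v$ (Theorem \ref{jointcontinuityv}), and the two-sided bound $\frac{c_-}{f_C}\le v\le \frac{c_+}{f_C}$ from \eqref{boundssuv}. First I would set
\[
\hat{y}_+(t):=\sup\Big\{y>0\,:\,v(t,y)=\tfrac{c_+}{f_C}\Big\},\qquad \hat{y}_-(t):=\inf\Big\{y>0\,:\,v(t,y)=\tfrac{c_-}{f_C}\Big\},
\]
with the conventions $\sup\emptyset=0$ and $\inf\emptyset=+\infty$.

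The structure of the two stopping regions is then immediate. If $y_0\in\mathcal{S}_{+,t}$ and $0<y\le y_0$, then Proposition \ref{primeproprietav}, item $1$, gives $v(t,y)\ge v(t,y_0)=c_+/f_C$, and \eqref{boundssuv} forces equality, so $(0,y_0]\subseteq\mathcal{S}_{+,t}$. Continuity of $v$ makes $\mathcal{S}_{+,t}$ closed in $(0,\infty)$ and hence of the form $(0,\hat{y}_+(t)]$; the symmetric argument gives $\mathcal{S}_{-,t}=[\hat{y}_-(t),\infty)$. Moreover, for any $y_\pm\in\mathcal{S}_{\pm,t}$ with $y_+\ge y_-$, monotonicity would yield $c_+/f_C\le c_-/f_C$, which is absurd; hence $\hat{y}_+(t)\le\hat{y}_-(t)$ in all cases, including those in which one of the stopping sets is empty (with the corresponding $\hat{y}_\pm(t)$ equal to $0$ or $+\infty$).

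It remains to upgrade this to strict inequality. The case $\hat{y}_+(t)=\hat{y}_-(t)=y_0\in(0,\infty)$ is ruled out instantly: both stopping sets would share the endpoint $y_0$, and continuity would then give $c_+/f_C=v(t,y_0)=c_-/f_C$, contradicting $c_+>c_-$. It therefore suffices to exclude the two extreme scenarios $v(t,\cdot)\equiv c_-/f_C$ and $v(t,\cdot)\equiv c_+/f_C$ on $(0,\infty)$, corresponding to $\hat{y}_+=\hat{y}_-=0$ and $\hat{y}_+=\hat{y}_-=+\infty$. To rule out the former, I would fix the sup-player's choice $\tau=T-t$ in \eqref{defv2}--\eqref{Jey}, obtaining
\[
v(t,y)\ge\inf_{\sigma\in[0,T-t]}\tilde{\mathbb{E}}\Big\{\tfrac{c_+}{f_C}e^{-\bar{\mu}\sigma}\mathds{1}_{\{\sigma<T-t\}}+\tfrac{c_-}{f_C}e^{-\bar{\mu}(T-t)}\mathds{1}_{\{\sigma=T-t\}}+\int_0^{\sigma}e^{-\bar{\mu}s}R_c(yC^0(s))\,ds\Big\}.
\]
The Inada condition $R_c(c)\to\infty$ as $c\downarrow 0$ combined with monotone convergence forces any sequence of near-minimizers $\sigma_n$ to satisfy $\sigma_n\to 0$ in $\tilde{\mathbb{P}}$-probability as $y\downarrow 0$, since otherwise the integral term would diverge and contradict $v\le c_+/f_C$; bounded convergence then makes $\tilde{\mathbb{E}}\{e^{-\bar{\mu}\sigma_n}\mathds{1}_{\{\sigma_n<T-t\}}\}\to 1$, driving the displayed lower bound to $c_+/f_C$ and yielding $\lim_{y\downarrow 0}v(t,y)=c_+/f_C$. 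A symmetric argument fixing $\sigma=T-t$ for the inf-player, and invoking $R_c(c)\to 0$ as $c\to\infty$ together with dominated convergence (dominant $R_c(y_0C^0(s))$ for any fixed $y_0>0$), gives $\lim_{y\to\infty}v(t,y)=c_-/f_C$. The step I expect to be the main obstacle is this last limit computation: although the intuition is transparent, the tightness-style claim that near-minimizers concentrate at $\sigma=0$ must be made rigorous by splitting on $\{\sigma_n\ge\delta\}$ and applying monotone convergence uniformly in $n$. Everything else is a clean consequence of monotonicity, continuity and the sandwich bound.
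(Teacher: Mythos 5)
Your proof is correct and, for the structural part (the three regions are ordered intervals), it is exactly the paper's route: monotonicity of $y\mapsto v(t,y)$ from Proposition \ref{primeproprietav} makes $\mathcal{S}_{+,t}$ a down-set and $\mathcal{S}_{-,t}$ an up-set, continuity closes them, and $\mathcal{C}_t$ is the open interval in between — the paper's own proof is literally this one sentence. Where you genuinely add something is the strict inequality $\hat{y}_+(t)<\hat{y}_-(t)$: the paper's one-liner does not address nonemptiness of $\mathcal{C}_t$ at all, and only effectively establishes it later, in items $3.$--$5.$ of Proposition \ref{freeboundaries}, where $0<\hat{y}_+(t)$ and $\hat{y}_-(t)<\infty$ (together with $\hat y_+(t)\le R_c^{-1}(\bar\mu c_+/f_C)<R_c^{-1}(\bar\mu c_-/f_C)\le \hat y_-(t)$) are proved by contradiction using the variational inequality \eqref{freeb-pr} and the auxiliary problem $\tilde v$. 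Your substitute — showing $\liminf_{y\downarrow0}v(t,y)\ge c_+/f_C$ by fixing $\tau=T-t$ and $\limsup_{y\to\infty}v(t,y)\le c_-/f_C$ by fixing $\sigma=T-t$ — is sound and in fact close in spirit to the paper's later argument for $\hat y_+(t)>0$ (which also exploits Inada plus monotone convergence on $\tilde{\mathbb{E}}\{\int_0^{T-t}e^{-\bar\mu s}R_c(yC^0(s))\,ds\}$). The tightness step you flag does go through: on $\{\sigma\ge\delta\}$ bound the integral below by $e^{-\bar\mu\delta}\int_0^\delta R_c(yC^0(s))\,ds$, note $\tilde{\mathbb{P}}(\int_0^\delta R_c(yC^0(s))\,ds\ge M)\to1$ as $y\downarrow0$ for every $M$, and conclude a lower bound $e^{-\bar\mu\delta}\big[\tfrac{c_+}{f_C}-M\,o(1)\big]$ uniform over $\sigma$; this avoids having to extract near-minimizers at all. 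Two small caveats: (i) your argument, like the statement itself, requires $t<T$ — at $t=T$ one has $v(T,\cdot)\equiv c_-/f_C$, so $\mathcal{C}_T=\emptyset$ and the strict inequality fails; this is an imprecision of the paper, not of your proof; (ii) for the degenerate cases you only need $v(t,y)>c_-/f_C$ for some $y$ (resp.\ $<c_+/f_C$ for some $y$), so the full limit computation is more than is required, though it costs nothing extra.
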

%\begin{proof}
%The result follows by 1.\ of Proposition \ref{primeproprietav} and recalling that $\mathcal{C}$ is open.
%\end{proof}
%Since $y \mapsto v(t,y)$ is decreasing (cf. Proposition \ref{primeproprietav}), then $\mathcal{S}_+$ lies below $\mathcal{C}$, and $\mathcal{C}$ lies below $\mathcal{S}_{-}$.
%From Proposition \ref{proprietacontinuazione} it is natural to define the two free-boundaries as
%\beq
%\label{ypiu}
%\hat{y}_{+}(t):= \sup\Big\{y > 0:\,\,v(t,y) = \frac{c_{+}}{f_C}\Big\}
%\eeq
%and
%\beq
%\label{ymeno}
%\hat{y}_{-}(t):= \inf\Big\{y > 0:\,\,v(t,y) = \frac{c_{-}}{f_C}\Big\}.
%\eeq
%A plot of $\hat{y}_+$ and $\hat{y}_-$ is provided in Figure \ref{figura} below.

\begin{remark}
\label{tempifrontieretempiv}
It is easy to see that the optimal stopping times $\tau^*$ and $\sigma^*$ of (\ref{stoppingtimesv}) may be written in terms of the free-boundaries $\hat{y}_{+}$ and $\hat{y}_{-}$ of Proposition \ref{proprietacontinuazione} as
\beq
\label{stoppingtimesfrontiere}
\left\{
\begin{array}{ll}
\tau^*(t,y):=\inf\{s \in [0,T-t): yC^0(s)\ge \hat{y}_{-}(t+s)\} \wedge (T-t), \\ \\
\sigma^*(t,y):=\inf\{s \in [0,T-t): yC^0(s) \le \hat{y}_{+}(t+s)\} \wedge (T-t).
\end{array}
\right.
\eeq
\end{remark}

Recalling now Theorem \ref{jointcontinuityv}, Theorem \ref{thmsaddlepointsv}, Proposition \ref{semiharmonic}, Proposition \ref{proprietacontinuazione}, Remark \ref{tempifrontieretempiv} and by using standard arguments based on the strong Markov property (cf. \cite{PeskShir}) we may show that $v$ solves the free-boundary problem
\begin{align}\label{freeb-pr}
\left\{
\begin{array}{cl}
\big(\partial_t+\mathcal{L}-\bar{\mu}\big)v(t,y)=-R_c(y) & \textrm{for $\hat{y}_+(t)<y<\hat{y}_-(t)$,\, $t \in [0,T)$}\\
\\
\big(\partial_t+\mathcal{L}-\bar{\mu}\big)v(t,y)\leq-R_c(y) & \textrm{for $y>\hat{y}_+(t)$,\, $t \in [0,T)$}\\
\\
\big(\partial_t+\mathcal{L}-\bar{\mu}\big)v(t,y)\geq-R_c(y) & \textrm{for $y<\hat{y}_-(t)$,\, $t \in [0,T)$}\\
\\
\frac{c_-}{f_C}\leq v (t,y)\leq \frac{c_+}{f_C} &\textrm{in $[0,T]\times (0,\infty)$} \\
\\
v(t,\hat{y}_\pm(t))=\frac{c_\pm}{f_C} \:\:\:\text{for $t \in [0,T)$} &\text{and}\:\:\: v(T,y)=\frac{c_-}{f_C}\:\:\:\text{for $y>0$}
\end{array}
\right.
\end{align}
with $\mathcal{L}f:=\frac{1}{2}\sigma^2_Cy^2f^{''}-\mu_Cyf^{'}$ for $f\in C^2_b((0,\infty))$. Moreover $v\in C^{1,2}$ in the continuation region $\mathcal{C}$.

\begin{proposition}
\label{freeboundaries}
Under Assumptions \ref{AssProfit} and \ref{scrap} one has
\begin{enumerate}
	\item $\hat{y}_{+}(t)$ and $\hat{y}_{-}(t)$ are decreasing;
	\item $\hat{y}_{+}(t)$ is left-continuous and $\hat{y}_{-}(t)$ is right-continuous;
	\item $ 0 < \hat{y}_{+}(t) < R_c^{-1}(\frac{\bar{\mu}c_+}{f_C})$, for $t \in [0,T)$;
	\item $\lim_{t \uparrow T}\hat{y}_{+}(t)=:\hat{y}_+(T)=0$;
	\item $0 < R_c^{-1}(\frac{\bar{\mu}c_{-}}{f_C}) < \hat{y}_{-}(t) < + \infty$, for $t \in [0,T)$;
	\item $\lim_{t \uparrow T}\hat{y}_{-}(t)=: \hat{y}_{-}(T-) = R_c^{-1}(\frac{\bar{\mu}c_{-}}{f_C})$.
\end{enumerate}
\end{proposition}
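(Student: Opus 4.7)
The plan is to derive each of the six properties in order, each time exploiting the monotonicity of $v$ in $t$ (Proposition~\ref{primeproprietav}), the joint continuity of $v$ (Theorem~\ref{jointcontinuityv}), and the variational inequality~\eqref{freeb-pr}. For item~1, if $t'>t$ and $y\in\mathcal{S}_{+,t'}$ then $c_+/f_C=v(t',y)\le v(t,y)\le c_+/f_C$, forcing $y\in\mathcal{S}_{+,t}$, so $\mathcal{S}_{+,t'}\subseteq\mathcal{S}_{+,t}$; symmetrically $\mathcal{S}_{-,t}\subseteq\mathcal{S}_{-,t'}$ from $v\ge c_-/f_C$. Both boundaries are therefore decreasing. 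Item~2 follows by combining this monotonicity with joint continuity: the left-limit $L:=\hat{y}_+(t-)\ge\hat{y}_+(t)$ exists, and since $v(s,\hat{y}_+(s))=c_+/f_C$ for every $s<t$ (the stopping regions are closed), letting $s\uparrow t$ yields $v(t,L)=c_+/f_C$, so $L\in\mathcal{S}_{+,t}$ and $L\le\hat{y}_+(t)$. The symmetric argument delivers right-continuity of $\hat{y}_-$.

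For item~3, set $y_+^\star:=R_c^{-1}(\bar{\mu}c_+/f_C)$. Positivity $\hat{y}_+(t)>0$ comes from the Inada condition: as $y\downarrow 0$ the running cost $R_c(yC^0(s))$ blows up, forcing the inf-player to stop at $\sigma=0$, whence $v(t,y)=c_+/f_C$ on a right-neighborhood of $0$. The bound $\hat{y}_+(t)\le y_+^\star$ is immediate from~\eqref{freeb-pr}: inside $\mathcal{S}_+$ one has $v\equiv c_+/f_C$, so $(\partial_t+\mathcal{L}-\bar{\mu})v=-\bar{\mu}c_+/f_C\ge-R_c(y)$, hence $y\le y_+^\star$. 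Strict inequality follows from the elementary bound
\begin{equation*}
v(t,y)\le\tfrac{c_+}{f_C}+\tilde{\mathbb{E}}\Big[\int_0^{\varepsilon} e^{-\bar{\mu}s}\big(R_c(yC^0(s))-\bar{\mu}\tfrac{c_+}{f_C}\big)ds\Big],
\end{equation*}
obtained by fixing the inf-strategy $\sigma=\varepsilon\in(0,T-t)$ and letting the sup-player respond optimally; for $y>y_+^\star$ the integrand is strictly negative near $s=0$, so $v(t,y)<c_+/f_C$, and a short Taylor/Ito expansion handles the boundary case $y=y_+^\star$. Item~4 is a continuity argument: if $\hat{y}_+(T-)=L>0$, pick $y_0\in(0,L)$; then $v(t,y_0)=c_+/f_C$ for every $t$ close to $T$, and joint continuity forces $v(T,y_0)=c_+/f_C$, contradicting the terminal condition $v(T,\cdot)=c_-/f_C$.

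Items~5 and~6 proceed by symmetric arguments. Setting $y_-^\star:=R_c^{-1}(\bar{\mu}c_-/f_C)$, the lower bound $\hat{y}_-(t)>y_-^\star$ follows from the VI in $\mathcal{S}_-$ together with the symmetric $\tau=\varepsilon$ perturbation, while finiteness $\hat{y}_-(t)<\infty$ comes from the dual bound
\begin{equation*}
v(t,y)-\tfrac{c_-}{f_C}\le\sup_{\tau\in[0,T-t]}\tilde{\mathbb{E}}\Big[\int_0^{\tau} e^{-\bar{\mu}s}\big(R_c(yC^0(s))-\bar{\mu}\tfrac{c_-}{f_C}\big)ds\Big]
\end{equation*}
(obtained by fixing $\sigma=T-t$) combined with $R_c(yC^0(s))\to 0$ as $y\to\infty$, which collapses the sup to zero for $y$ large enough and gives $v(t,y)=c_-/f_C$. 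For item~6, $\hat{y}_-(T-)\ge y_-^\star$ is inherited from item~5; for the reverse inequality I would argue by contradiction. Assuming $\hat{y}_-(T-)=L>y_-^\star$ and fixing $y_0\in(y_-^\star,L)$, one has $y_0\in\mathcal{C}_t$ for $t$ near $T$ (since $y_0>y_+^\star\ge\hat{y}_+(t)$), and Ito applied to $e^{-\bar{\mu}s}v(t+s,y_0C^0(s))$ together with the PDE yields the Feynman--Kac identity
\begin{equation*}
v(t,y_0)-\tfrac{c_-}{f_C}=\tilde{\mathbb{E}}\Big[\tfrac{c_+-c_-}{f_C}e^{-\bar{\mu}\sigma^*}\mathds{1}_{\sigma^*<\tau^*}+\int_0^{\sigma^*\wedge\tau^*}\!\!e^{-\bar{\mu}s}\big(R_c(y_0C^0(s))-\bar{\mu}\tfrac{c_-}{f_C}\big)ds\Big].
\end{equation*}
As $t\uparrow T$, the first term is controlled by $\tilde{\mathbb{P}}(\sigma^*<T-t)$, which vanishes because $y_0$ stays bounded away from $\hat{y}_+$, while the integrand is strictly negative on the high-probability event $\{y_0C^0(s)>y_-^\star\}$; a comparison with an explicit linear-in-time supersolution $\psi(t,y)=c_-/f_C+A(T-t)$ of the VI on the strip $y>y_-^\star$ then forces $v(t,y_0)\le c_-/f_C$, contradicting $y_0\in\mathcal{C}_t$.

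The principal obstacle is item~6: a rigorous contradiction there requires a quantitative balancing of the vanishing hitting probability of $\hat{y}_+$ against the (possibly unbounded) positive tail of the running integrand, or equivalently the construction of a suitable local supersolution of the VI valid near the terminal time. The remaining items reduce to relatively standard manipulations once the monotonicity, joint continuity, and PDE/VI characterization of $v$ are available.
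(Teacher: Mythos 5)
Items 1, 2 and 4 of your proposal are correct and essentially identical to the paper's arguments (monotonicity of $t\mapsto v(t,y)$ plus closedness of the stopping regions for 1--2, continuity of $v$ up to $t=T$ for 4). For item 3 the upper bound via the variational inequality on $\mathcal{S}_+$ is exactly the paper's argument (which, incidentally, only establishes the non-strict inequality); your positivity sketch has the right idea, but to make it rigorous you must first propagate $\hat{y}_+(t)=0$ to all later times by monotonicity, so that $\sigma^*=T-t$, and then compare $\tilde{\mathbb{E}}\{\int_0^{T-t}e^{-\bar{\mu}s}R_c(yC^0(s))\,ds\}$ with $c_+/f_C$ for small $y$ via the Inada condition, which is what the paper does.

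The genuine gap is in item 5, the finiteness of $\hat{y}_-$. You assert that, since $R_c(yC^0(s))\to0$ as $y\to\infty$, the quantity $\sup_{\tau}\tilde{\mathbb{E}}\{\int_0^{\tau}e^{-\bar{\mu}s}(R_c(yC^0(s))-\bar{\mu}c_-/f_C)\,ds\}$ ``collapses to zero'' for $y$ large. This does not follow: $C^0$ is a geometric Brownian motion, so for every $y$ the event $\{\inf_{s\le T}yC^0(s)<\overline{y}_-\}$ has positive probability, and on that event the integrand is positive and, by the Inada condition at zero, unbounded; the sup-player can simply wait for this event. One must show that the negative accumulation incurred before the process can reach the favourable region dominates the possible gain afterwards. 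The paper does this with a full page of work: it introduces the auxiliary stopping problem $\tilde{v}$ with boundary $b(t)\ge\hat{y}_-(t)$, assumes $b(t_o)=+\infty$ (which forces the optimal $\tilde{\tau}^*(0,y)\ge t_o$), and combines a H\"older estimate with Lemma \ref{stimaconcavitaR} and the Markov-inequality bound $\tilde{\mathbb{P}}(\tilde{\tau}^*>\tau^{\epsilon}_{\overline{y}_-})\le C_T[\ln(y/(\overline{y}_-+\epsilon))]^{-1}$ to conclude $\tilde{v}(0,y)<c_-/f_C$ for large $y$, a contradiction; the case $t_o=0$ requires a separate horizon-extension argument. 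None of this is present in, or implied by, your one-line claim. For item 6 you honestly flag that your Feynman--Kac/supersolution route is incomplete; note that the paper sidesteps the quantitative balancing entirely by a soft device, namely shifting the boundary down by the constant $\overline{y}_-$, observing that $v>c_-/f_C$ along the shifted curve while $v(T,\cdot)\equiv c_-/f_C$, and invoking joint continuity of $v$ --- the same mechanism as in item 4.
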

\begin{proof}
$1.$ It easily follows from $2.$~of Proposition \ref{primeproprietav} and the fact that $c_\pm/f_C$ are constant (cf.~for instance \cite{Jacka}).
\vspace{+8pt}

$2.$ It follows from point 1.\ and from the fact that $\mathcal{S}_{\pm}$ are closed sets (cf.\ also \cite{Jacka}, proof of Proposition $2.4$).
\vspace{+8pt}

$3.$ To show that $\hat{y}_{+}(t) > 0$ for any $t < T$ we argue by contradiction and we assume that $\hat{y}_{+}(t) = 0$ for some $t \in [0,T)$. From monotonicity of $\hat{y}_{+}(\cdot)$ we have $\hat{y}_{+}(t+s)=0$ for every $s \in [0,T-t)$. Take now $y \in \mathcal{C}_t$ and notice that $yC^0(s)>0$, $s \in [0,T-t)$. It follows that $\sigma^{*}=T-t$,
\begin{eqnarray*}
v(t,y)  \hspace{-1pt} = \hspace{-4pt}  \sup_{\tau \in [0,T-t]}\tilde{\mathbb{E}}\bigg\{\frac{c_{-}}{f_C} e^{-\bar{\mu} \tau} + \int_0^{\tau \wedge (T-t)} \hspace{-5pt} e^{-\bar{\mu}s} R_c(yC^0(s))\,ds\bigg\} \hspace{-1pt} > \hspace{-1pt}  \tilde{\mathbb{E}}\bigg\{\int_0^{T-t} \hspace{-5pt} e^{-\bar{\mu}s} R_c(yC^0(s))\,ds\bigg\},
\end{eqnarray*}
and
\beq
\label{vmenocpiu}
v(t,y) - \frac{c_+}{f_C} > \tilde{\mathbb{E}}\bigg\{\int_0^{T-t} e^{-\bar{\mu}s} R_c(yC^0(s))\,ds\bigg\} - \frac{c_+}{f_C}.
\eeq
The right-hand side of (\ref{vmenocpiu}) may be taken strictly positive by monotone convergence and Inada conditions (cf.\ Assumption \ref{AssProfit}) for $y$ sufficiently small. Such a contradiction proves that $\hat{y}_{+}(t) > 0$ for any $t < T$.

Given that $\mathcal{S}_{+,t}$ is connected (cf.~Proposition \ref{proprietacontinuazione}), $\hat{y}_+$ is positive and decreasing, then $\mathcal{S}_+$ is connected, with non-empty interior $int\,\mathcal{S}_{+}$. Taking $v=c_+/f_C$ in the third equation of \eqref{freeb-pr} one has $int\,\mathcal{S}_{+}\subseteq\big\{(t,y)\in[0,T)\times(0,\infty)\,:\,R_c(y)\ge\frac{\bar{\mu}c_+}{f_C}\big\}$. Therefore, setting $\overline{y}_{+}:=R_c^{-1}(\frac{\bar{\mu}c_+}{f_C})$ one finds $\hat{y}_+(t)\le\overline{y}_{+}$ for all $t\in[0,T)$.
\vspace{+8pt}

$4.$ If $\hat{y}_+(T) > 0$ then we would have $\lim_{y \downarrow \hat{y}_{+}(T)}v(T,y) = \frac{c_{-}}{f_C}$ and $\lim_{t \uparrow T}v(t,\hat{y}_{+}(t))=\frac{c_{+}}{f_C}$, but this contradicts the continuity of $v$ on $[0,T] \times (0,\infty)$ (cf.\ Theorem \ref{jointcontinuityv}).
\vspace{+8pt}

$5.$ We shall first show that $\hat{y}_{-}(t) < + \infty$. To accomplish that we introduce an auxiliary optimal stopping problem with free-boundary $b(t)$ such that $\hat{y}_{-}(t) \leq b(t)$ and $b(t) < +\infty$.
Notice that for any $(t,y) \in [0,T] \times (0,\infty)$ one has
%\beq
%\label{vlessvtilde}
$v(t,y) \leq \tilde{v}(t,y)$,
%\eeq
with
\beq
\label{defsvtilde}
\tilde{v}(t,y) := \sup_{\tau \in [0,T-t]}\tilde{\mathbb{E}}\bigg\{\frac{c_{-}}{f_C}e^{-\bar{\mu}\tau} + \int_0^{\tau}e^{-\bar{\mu}s}R_c(yC^0(s))\,ds\bigg\},
\eeq
by simply taking $\sigma=T-t$ in \eqref{defv2}. It is not hard to see that $\tilde{v}(t,y) \geq \frac{c_{-}}{f_C}$ for any $(t,y) \in [0,T] \times (0,\infty)$, $y \mapsto \tilde{v}(t,y)$ is decreasing for any $t \in [0,T]$ due to the concavity of $R$, $t \mapsto \tilde{v}(t,y)$ is decreasing and continuous for any $y \in (0,\infty)$, and $y \mapsto \tilde{v}(t,y)$ is continuous uniformly in $t$. Then $(t,y) \mapsto \tilde{v}(t,y)$ is continuous on $[0,T] \times (0,\infty)$ and the stopping time $\tilde{\tau}^{*}(t,y):=\inf\big\{s \in [0,T-t): \tilde{v}(t+s, yC^0(s))\leq \frac{c_{-}}{f_C}\big\} \wedge (T-t)$ is optimal (cf.~for instance \cite{PeskShir}). Moreover, there exists a unique monotone decreasing free-boundary
%\beq
%\label{boundaryb}
$b(t):=\inf\Big\{y \in (0,\infty): \tilde{v}(t,y) = \frac{c_{-}}{f_C}\Big\}$ for $t<T$,
%\eeq
such that the continuation region $\tilde{\mathcal{C}}$ is the open set $\tilde{\mathcal{C}}:=\{y \in (0,\infty): y < b(t),\,\,t < T\}$.

Since $v(t,y) \leq \tilde{v}(t,y)$, then it is not hard to show that $\hat{y}_{-}(t) \leq b(t)$. We will now prove that $b(t)<\infty$ for all $t\in[0,T]$ adapting arguments by \cite{PeskShir}, Chapter VII, Section $26.2$. Assume there exists $0 < t_o < T$ such that $b(t_o)=+\infty$, then $\tilde{\tau}^{*}(0,y)\geq t_o$ for any $y > 0$ and
$$\tilde{v}(0,y) = \frac{c_{-}}{f_C} + \tilde{\mathbb{E}}\bigg\{\int_0^{\tilde{\tau}^{*}(0,y)}e^{-\bar{\mu}s}\left(R_c(yC^0(s)) - \frac{\bar{\mu}c_{-}}{f_C}\right)\,ds\bigg\},$$
by \eqref{defsvtilde} and an integration by parts.
%since for any stopping time $\tau\in[0,T]$
%$$e^{-\bar{\mu}\tau}\frac{c_-}{f_C}=\frac{c_-}{f_C}-\int^\tau_0{e^{-\bar{\mu}s}\frac{\bar{\mu}c_-}{f_C}ds}.$$
Fix $\epsilon > 0$, set $\overline{y}_{-}:=R_c^{-1}(\frac{\bar{\mu}c_{-}}{f_C})$ and define the stopping time
$\tau^{\epsilon}_{\overline{y}_{-}}(0,y):=\inf\{s \in [0,T): yC^0(s) \leq \overline{y}_{-} + \epsilon\} \wedge T.$
Observe that there exists $q_\epsilon>0$ such that $R_c(y) - \frac{\bar{\mu}c_{-}}{f_C} < -q_\epsilon$ for all $y \geq \overline{y}_{-} + \epsilon$, by (\ref{freeb-pr}).

From now on we write $\tilde{\tau}^{*}\equiv\tilde{\tau}^{*}(0,y)$ and $\tau^{\epsilon}_{\overline{y}_{-}}\equiv\tau^{\epsilon}_{\overline{y}_{-}}(0,y)$ to simplify notation. We then have
\begin{align}
\tilde{v}(0,y) -\frac{c_{-}}{f_C}=&\, \tilde{\mathbb{E}}\bigg\{\mathds{1}_{\{ \tilde{\tau}^{*} \leq \tau^{\epsilon}_{\overline{y}_{-}}\}}\int_0^{\tilde{\tau}^{*}}e^{-\bar{\mu}s}\left(R_c(yC^0(s)) - \frac{\bar{\mu}c_{-}}{f_C}\right)\,ds\bigg\} \nonumber \\
& +\, \tilde{\mathbb{E}}\bigg\{\mathds{1}_{\{ \tilde{\tau}^{*} > \tau^{\epsilon}_{\overline{y}_{-}}\}}\int_{0}^{\tilde{\tau}^{*}}e^{-\bar{\mu}s}\left(R_c(yC^0(s)) - \frac{\bar{\mu}c_{-}}{f_C}\right)\,ds\bigg\} \\
\leq & - q_\epsilon\, t_o\, \tilde{\mathbb{P}}(\tilde{\tau}^{*} \leq \tau^{\epsilon}_{\overline{y}_{-}}) + c(y)\tilde{\mathbb{P}}(\tilde{\tau}^{*} > \tau^{\epsilon}_{\overline{y}_{-}})^{\frac{1}{2}}, \nonumber
\end{align}
where we have used H\"older inequality and set $c(y):=\tilde{\mathbb{E}}\{|\int_{0}^{T}e^{-\bar{\mu}s}[R_c(yC^0(s)) - \frac{\bar{\mu}c_{-}}{f_C}]\,ds|^2\}^{\frac{1}{2}} < \infty$ (which is bounded by some positive constant $\bar{\kappa}$ as $y \uparrow \infty$ by Lemma \ref{stimaconcavitaR} in Section \ref{appproofs}). If now
\beq
\label{ifnowlimit}
\lim_{y \uparrow \infty}\tilde{\mathbb{P}}(\tilde{\tau}^{*} (0,y)> \tau^{\epsilon}_{\overline{y}_{-}}(0,y))=0,
\eeq
then we have $\tilde{v}(0,y) - \frac{c_{-}}{f_C} < 0$ for $y$ sufficiently large, thus reaching a contradiction.

To verify that, take now $y>\overline{y}_-+\epsilon$ and notice that $\{\tau^{\epsilon}_{\overline{y}_{-}}(0,y) < \tilde{\tau}^{*}(0,y)\} \subseteq \{\inf_{0 \leq s \leq T}yC^0(s) \leq \overline{y}_{-} + \epsilon\}$.
%\begin{align*}
%&
%\end{align*}
%with $\hat{\mu}_C$ as in \eqref{change}.
Then from \eqref{capacitysolution} and \eqref{mus} we obtain
\begin{eqnarray*}
\tilde{\mathbb{P}}(\tilde{\tau}^{*}(0,y) > \tau^{\epsilon}_{\overline{y}_{-}}(0,y))\,  \hspace{-2pt} \leq \hspace{-2pt}\,  \tilde{\mathbb{P}}\Big(\sup_{0 \leq s \leq T}|\sigma_C \tilde{W}(s) + \hat{\mu}_C s| \geq \ln\left(\frac{y}{\overline{y}_{-} + \epsilon}\right)\Big)\, \hspace{-2pt} \leq \hspace{-2pt} \,C_T\Big[\ln\left(\frac{y}{\overline{y}_{-} + \epsilon}\right)\Big]^{-1},\nonumber
\end{eqnarray*}
by Markov inequality and standard estimates on the solutions of stochastic differential equations (cf.~\cite{Friedman}, Volume 1, Chapter 5). It follows (\ref{ifnowlimit}) and that $b(t_o)<+\infty$.

It remains now to exclude the case $t_o=0$ as well. Assume $b(0)=+\infty$, take $\delta>0$, $0 < t < \delta$ and define
\begin{align}\label{aux}
\tilde{v}_{\delta}(t,y):=\sup_{\tau \in [0,T + \delta -t]}\tilde{\mathbb{E}}\bigg\{\frac{c_{-}}{f_C}e^{-\bar{\mu}\tau} + \int_0^{\tau}e^{-\bar{\mu}s}R_c(yC^0(s))\,ds\bigg\}.
\end{align}
Hence $\tilde{v}_{\delta}(t,y) \ge \tilde{v}(t,y)$ and $\tilde{v}_{\delta}(t+\delta,y) = \tilde{v}(t,y)$. If we now denote by $b_\delta$ the free-boundary of problem \eqref{aux}, we easily find $b(0)=b_\delta(\delta)$. We may thus repeat same arguments as those employed in the case $t_o >0$ to obtain a contradiction and conclude that $b(0) < +\infty$. Finally, we may proceed as in the second part of the proof of $3.$~to show that $\hat{y}_-(t)\ge \overline{y}_-$ for all $t\in[0,T]$.
\vspace{+8pt}

$6.$ Define $\hat{b}_{-}(t):=\hat{y}_{-}(t) - \overline{y}_{-}$, with $\overline{y}_{-}:=R_c^{-1}(\frac{\bar{\mu}c_{-}}{f_C})$. This curve is nonnegative thanks to $5.$~and $\hat{b}_{-}(t) < \hat{y}_{-}(t)$ for all $t \leq T$; that is, $(t,\hat{b}_{-}(t)) \in \mathcal{S}_+\cup\,\mathcal{C}$ and $v(t, \hat{b}_{-}(t)) > \frac{c_{-}}{f_C}$ for all $t\leq T$.
Assume now $\hat{b}_{-}(T-) > 0$ then $\lim_{y \uparrow \hat{b}_{-}(T-)}v(T,y)=\frac{c_{-}}{f_C}$ and $\lim_{t \uparrow T}v(t,\hat{b}_{-}(t))> \frac{c_{-}}{f_C}$, but this is not possible being $v(t,y)$ continuous on $[0,T] \times (0,\infty)$ by Theorem \ref{jointcontinuityv}. %It then follows $\hat{b}_{-}(T-)=0$, i.e. $\hat{y}_{-}(T-) = \overline{y}_{-}$.
\end{proof}

\begin{theorem}\label{cont-bdry}
The free-boundaries $t\mapsto\hat{y}_+(t)$ and $t\mapsto\hat{y}_-(t)$ are continuous on $[0,T]$.
\end{theorem}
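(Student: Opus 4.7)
From Proposition~\ref{freeboundaries} we already know $\hat{y}_+$ is left-continuous and $\hat{y}_-$ is right-continuous on $[0,T)$, together with the boundary behavior at $T$. It therefore suffices to establish right-continuity of $\hat{y}_+$ and left-continuity of $\hat{y}_-$ on $[0,T)$; by the obvious symmetry between the two stopping regions I focus solely on the right-continuity of $\hat{y}_+$.

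Assume for contradiction that there exists $t_0\in[0,T)$ such that $a:=\hat{y}_+(t_0+)<b:=\hat{y}_+(t_0)$. Two ingredients then set up the local picture inside which the contradiction will be extracted. First, the monotonicity of $\hat{y}_+$ gives $\hat{y}_+(t_0+s)\leq a$ for every $s\geq 0$, and the right-continuity of $\hat{y}_-$ at $t_0$ (already proved) together with $\hat{y}_-(t_0)>b$ implies $\hat{y}_-(t_0+s)>b$ for all sufficiently small $s>0$. Hence one can pick $\delta>0$ so that the open rectangle $\mathcal{R}:=(t_0,t_0+\delta)\times(a,b)$ lies entirely in the continuation region $\mathcal{C}$; inside $\mathcal{R}$ the value $v$ is $C^{1,2}$ and solves $(\partial_t+\mathcal{L}-\bar{\mu})v=-R_c(y)$. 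Second, on the bottom edge $\{t_0\}\times[a,b]\subset\mathcal{S}_+$ one has $v\equiv c_+/f_C$, and by Theorem~\ref{jointcontinuityv} $v(t,\cdot)\to c_+/f_C$ uniformly on compact subsets of $(a,b)$ as $t\downarrow t_0$.

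The core of the argument is to exploit the semi-harmonic identity (iii) of Proposition~\ref{semiharmonic} at the interior points $(t_0+\epsilon,y_0)$ with $y_0\in(a,b)$, choosing $\rho$ to be the first exit time $\tau^{up}:=\inf\{s>0:y_0C^0(s)\geq \overline{y}_+\}$ of the geometric Brownian motion $y_0C^0(\cdot)$ from $(0,\overline{y}_+)$, where $\overline{y}_+=R_c^{-1}(\bar{\mu}c_+/f_C)$. The decisive observation is that, because $\hat{y}_+(t_0+\epsilon+s)\leq a<y_0$ for every $s\geq 0$, the inf-player's optimal stopping time obeys the lower bound
\[
\sigma^*(t_0+\epsilon,y_0)\geq \tau_a:=\inf\{s>0:y_0C^0(s)\leq a\}>0\qquad\tilde{\mathbb{P}}\text{-a.s.},
\]
\emph{uniformly in $\epsilon>0$}, so $\sigma^*$ does \emph{not} collapse to zero as $\epsilon\downarrow 0$ in spite of the fact that $v(t_0,y_0)=c_+/f_C$ would formally correspond to $\sigma^*=0$ at $(t_0,y_0)$. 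Passing to the limit $\epsilon\downarrow 0$ in the identity (iii) via continuity of $v$, dominated convergence, and the convergence of the stopping times $\sigma^*(t_0+\epsilon,y_0),\tau^*(t_0+\epsilon,y_0)$, one obtains an exact representation of $v(t_0,y_0)=c_+/f_C$ involving the strictly positive random stopping time $\kappa_0:=\sigma^*_0\wedge\tau^*_0\wedge\tau^{up}$. Since $y_0C^0(s)<\overline{y}_+$ throughout $[0,\kappa_0)$, the integrand $R_c(y_0C^0(s))-\bar{\mu}c_+/f_C$ is strictly positive; combining this with the strict upper bound $v<c_+/f_C$ at the interior exit point on the event $\{\tau^{up}<\sigma^*_0\wedge\tau^*_0\}$ (which is a point of $\mathcal{C}$ by item~3 of Proposition~\ref{freeboundaries}) then forces a strict inequality incompatible with the equality $v(t_0,y_0)=c_+/f_C$.

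The main obstacle is the quantitative implementation of this limit passage: one must justify the convergence of the stopping times jointly with the continuity of $v$ at the exit point, and extract a uniform positive gap between $v$ and $c_+/f_C$ on an appropriate compact subset of $\mathcal{C}$ against which the Inada-type strict positivity of $R_c-\bar{\mu}c_+/f_C$ can be quantitatively compared. These ingredients, however, are all at hand: boundedness and continuity of $v$ (Theorem~\ref{jointcontinuityv}), strict inequalities for the boundary from items~3 and~5 of Proposition~\ref{freeboundaries}, and the semi-harmonic characterization of Proposition~\ref{semiharmonic}. Once the contradiction is obtained we conclude $\hat{y}_+(t_0+)=\hat{y}_+(t_0)$ for every $t_0\in[0,T)$, and combined with items~4 and~6 of Proposition~\ref{freeboundaries}, which describe the boundary values at~$T$, this yields continuity of $\hat{y}_+$ and $\hat{y}_-$ on the whole of $[0,T]$.
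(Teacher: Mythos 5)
Your route is genuinely different from the paper's (which, for the lower boundary, differentiates the variational equality in $y$ and plays the strict concavity $R_{cc}<0$ against $v_y\le 0$), and unfortunately it does not close: the decisive step --- ``forces a strict inequality incompatible with the equality $v(t_0,y_0)=c_+/f_C$'' --- does not follow. Writing $e^{-\bar\mu\kappa}\tfrac{c_+}{f_C}=\tfrac{c_+}{f_C}-\int_0^{\kappa}e^{-\bar\mu s}\tfrac{\bar\mu c_+}{f_C}\,ds$, the limit of identity (iii) of Proposition \ref{semiharmonic} reads
\[
0=v(t_0,y_0)-\frac{c_+}{f_C}=\tilde{\mathbb{E}}\bigg\{e^{-\bar\mu\kappa_0}\Big[v\big(t_0+\kappa_0,y_0C^0(\kappa_0)\big)-\frac{c_+}{f_C}\Big]+\int_0^{\kappa_0}e^{-\bar\mu s}\Big[R_c\big(y_0C^0(s)\big)-\frac{\bar\mu c_+}{f_C}\Big]ds\bigg\}.
\]
The integral term is indeed strictly positive (since $\kappa_0>0$ and $y_0C^0(s)<\overline{y}_+$ there), but the boundary term is \emph{nonpositive} because $v\le c_+/f_C$ everywhere, and it is strictly negative precisely on the events you single out: on $\{\kappa_0=\tau^{up}\}$ the exit point lies in $\mathcal{C}$, and on $\{\kappa_0=\tau^*_0\}$ one has $v=c_-/f_C$. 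The two contributions thus have opposite signs and can cancel, so no contradiction is produced; in the hypothetical jump scenario the consistent picture is exactly $v(t_0+r,y_0)\approx \tfrac{c_+}{f_C}-r\big(R_c(y_0)-\tfrac{\bar\mu c_+}{f_C}\big)$, which satisfies your limiting identity. This is the very obstruction the authors flag (``inequalities that one would normally try to use cannot be obtained'' for $\hat{y}_+$), and it is why their proof passes to the $y$-derivative of the PDE, where the strict negativity of $R_{cc}$ --- never invoked in your argument --- produces a one-signed quantity that cannot be compensated.

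A secondary issue is the claimed ``obvious symmetry'' between the two boundaries: it is false. For $\hat{y}_-$ a first-order argument works because $R_c(y)-\tfrac{\bar\mu c_-}{f_C}<0$ above $\overline{y}_-$ pairs with $\partial_t v\le 0$ to give a contradiction of definite sign, whereas for $\hat{y}_+$ the analogous signs cooperate rather than conflict; accordingly the paper gives two genuinely different proofs. So even a repaired version of your argument for $\hat{y}_+$ would leave the $\hat{y}_-$ case needing its own treatment. (Your reduction to the two missing one-sided continuities, the uniform lower bound $\sigma^*(t_0+\epsilon,y_0)\ge\tau_a>0$, and the handling of the endpoint $T$ via items 2, 4 and 6 of Proposition \ref{freeboundaries} are all correct.)
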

\begin{proof}
A proof of continuity by standard use of Newton-Leibnitz formula (cf.~\cite{PeskShir} for a list of examples) seems rather hard to implement for the lower free-boundary $\hat{y}_+$. In fact, inequalities that one would normally try to use cannot be obtained in that case. For this reason we abandon that approach and proceed via arguments inspired by PDE theory (cf.~also \cite{TDA13}).

$1.$ We start by considering the upper free-boundary, $\hat{y}_-(t)$, which is right-continuous (cf.~Proposition \ref{freeboundaries}). Let us argue by contradiction and assume that there exists $t_o\in(0,T)$ where a discontinuity of $\hat{y}_-(\,\cdot\,)$ occurs; that is, $t_o$ is such that $\hat{y}_-(t_o-)>\hat{y}_-(t_o)$. Fix $t^\prime\in(0,t_o)$, $y_1$ and $y_2$ such that $\hat{y}_-(t_o)<y_1<y_2<\hat{y}_-(t_o-)$ and define a domain $\mathcal{R}\subset\mathcal{C}$ by $\mathcal{R}:=(t^\prime,t_o)\times(y_1,y_2)$. Its parabolic boundary $\partial_P\mathcal{R}$ is clearly formed by the horizontal lines $[t^\prime,t_o)\times\{y_i\}$, $i=1,2$ and by the vertical line $\{t_o\}\times[y_1,y_2]$. From the first equation in \eqref{freeb-pr} and the definition of $\mathcal{R}$ we obtain that $v$ (uniquely) solves the Dirichlet-Cauchy problem
\begin{eqnarray}\label{continuity01}
\begin{array}{ll}
\vspace{+5pt}
\big(\partial_t+\mathcal{L}-\bar{\mu}\big)u(t,y)=-R_c(y) & \textrm{in $\mathcal{R}$}\\
\vspace{+5pt}
u(t,y_{\,i})=v(t,y_{\,i}) & \text{$i=1,2$ and $t\in[t^\prime,t_0)$}\\
%\vspace{+5pt}
% & t\in[t^\prime,t_0)\\
\vspace{+5pt}
u(t_0,y)=\frac{c_-}{f_C} & y\in[y_1,y_2].
\end{array}
\end{eqnarray}
We denote by $C^{\infty}_c([y_1,y_2])$ the set of functions with infinitely many continuous derivatives and compact support in $[y_1,y_2]$. Take $\psi\ge0$ arbitrary in $C^{\infty}_c([y_1,y_2])$ and such that $\int^{y_2}_{y_1}{\psi(y)dy=1}$. Multiply the first equation in \eqref{continuity01} (with $v$ instead of $u$) by $\psi$ and integrate over $[y_1,y_2]$. It gives
\begin{align}\label{continuity02}
\int_{y_1}^{y_2}{\partial_t v(t,y)\psi(y)dy}=-\int_{y_1}^{y_2}{\Big[\big(\mathcal{L}-\bar{\mu}\big)v(t,y)+R_c(y)\Big]\psi(y)dy}\qquad
\textrm{for all $t\in[t^\prime,t_o)$}.
\end{align}
We now integrate by parts twice the term on the right hand side of \eqref{continuity02} and obtain
\begin{align}\label{continuity03}
\int_{y_1}^{y_2}{\partial_t v(t,y)\psi(y)dy}=-\int_{y_1}^{y_2}{\Big[v(t,y)\big(\mathcal{L}^*-\bar{\mu}\big)+R_c(y)\Big]\psi(y)dy}
\qquad\textrm{for all $t\in[t^\prime,t_o)$},
\end{align}
where ${\mathcal{L}}^*$ is the adjoint of $\mathcal{L}$. %and, in this particular case, it reads
%\begin{align}\label{adj}
%{\mathcal{L}}^*\psi(y):=\frac{1}{2}\sigma^2_Cy^2\psi^{\prime\prime}(y)+\big(3/2\sigma^2_C-\hat{\mu}_C\big)y
%\psi^\prime(y)+\big(1/2\sigma^2_C-\hat{\mu}_C\big)\psi(y).
%\end{align}
Recall that $\partial_tv$ is negative by $3.$~of Proposition \ref{primeproprietav}. Take the limit as $t\to t_o$ in \eqref{continuity03}, use dominated convergence, Theorem \ref{jointcontinuityv} and the last equation in \eqref{continuity01} to obtain
\begin{align}\label{continuity04}
0\ge&\lim_{t\uparrow t_o} \int_{y_1}^{y_2}{\partial_t v(t,y)\psi(y)dy}=-\int_{y_1}^{y_2}{\Big[v(t_o,y)\big(\mathcal{L}^*-\bar{\mu}\big)+R_c(y)\Big]\psi(y)dy}
\nonumber\\
=&-\int_{y_1}^{y_2}{\Big[\frac{c_-}{f_C}\big(\mathcal{L}^*-\bar{\mu}\big)+R_c(y)\Big]\psi(y)dy}=
-\int_{y_1}^{y_2}{\Big[R_c(y)-\frac{\bar{\mu}c_-}{f_C}\Big]\psi(y)dy}.
\end{align}
Notice that $y\mapsto R_c(y)-\frac{\bar{\mu}c_-}{f_C}$ is continuous and strictly negative for $y\in[y_1,y_2]$, by $5.$~of Proposition \ref{freeboundaries} as $y_1>\overline{y}_-$ and $R_{c}(\cdot)$ is strictly decreasing. Hence, there exists a positive constant $\ell:=\ell(y_1,y_2)$ such that $\sup_{y\in[y_1,y_2]}\Big[R_c(y)-\frac{\bar{\mu}c_-}{f_C}\Big]\le-\ell$ and from the last term of \eqref{continuity04} we find
\begin{align}\label{continuity05}
0\ge-\int_{y_1}^{y_2}{\Big[R_c(y)-\frac{\bar{\mu}c_-}{f_C}\Big]\psi(y)dy}\geq\ell\int^{y_2}_{y_1}{\psi(y)dy}=
\ell>0,
\end{align}
by using that $\int_{y_1}^{y_2}{\psi(y)dy}=1$. Therefore, we reach a contradiction and $\hat{y}_-(t_o-)=\hat{y}_-(t_o)$.

$2.$ We will now prove continuity of the lower boundary $\hat{y}_+(\cdot)$. Again we argue by contradiction and assume that there exists $t_o \in(0,T)$ where a discontinuity of $\hat{y}_+(\cdot)$ occurs.
Then $t_o$ is such that $\hat{y}_+(t_o) > \hat{y}_+(t_o+)$. As before we define an open bounded domain $\mathcal{R}\subset \mathcal{C}$ with parabolic boundary $\partial_P\mathcal{R}$ formed by the horizontal lines $[t_o, t^\prime) \times\{y_i\}$, $i=1,2$ and by the vertical line $\{t^\prime\}\times[y_1,y_2]$ with $y_1$ and $y_2$ such that $\hat{y}_+(t_o+)<y_1<y_2<\hat{y}_+(t_o)$ and arbitrary $t^\prime\in(t_o,T)$.
We have that $u:=\frac{c_{+}}{f_C} - v$ solves
\beq
\label{PDEu}
\big(\partial_t+\mathcal{L}-\bar{\mu}\big)u(t,y) = R_c(y) - \frac{\bar{\mu}c_+}{f_C}, \quad (t,y) \in \mathcal{R},
\eeq
by \eqref{freeb-pr} and additionally $u(t_o,y) \equiv 0$ for $y \in [y_1,y_2]$. Regularity of $R_c$ and of the coefficients in $\mathcal{L}$ imply that $u_{yyy}$ and $u_{ty}$ exist and are continuous in $\mathcal{R}$ (cf.~\cite{FriedmanPDE}, Theorem $10$, Chapter $3$). Differentiating (\ref{PDEu}) with respect to $y$ and defining $\bar{u}:=u_y$ we easily obtain (cf.~\eqref{mus})
\beq
\label{PDEbaru}
\bar{u}_t(t,y) + \frac{1}{2}\sigma^2_C y^2 \bar{u}_{yy}(t,y) + (2\sigma^2_C - \mu_C)y\bar{u}_y(t,y) + (\sigma^2_C -\mu_C- \bar{\mu})\bar{u}(t,y) = R_{cc}(y) < 0, \quad (t,y) \in \mathcal{R},
\eeq
as $R$ is strictly concave. It will be useful in what follows to define
\begin{align}\label{A-op}
\mathcal{G}f(y):=\frac{1}{2}\sigma^2_C y^2 {f}^{\prime\prime}(y) + (2\sigma^2_C -{\mu}_C)y{f}^\prime(y) + (\sigma^2_C-\mu_C- \bar{\mu}){f}(y)\quad\text{for $f\in C^2_b(\mathbb{R})$}.
\end{align}

Again we consider a test function $\psi\in C^\infty_c([y_1,y_2])$ such that $\psi\ge0$ and $\int^{y_2}_{y_1}{\psi(y)dy}=1$. We define a function $F_\psi:(t_o,T)\to\mathbb{R}$ by
\begin{align}\label{F-psi}
F_\psi(t):=\int^{y_2}_{y_1}{\bar{u}_t(t,y)\psi(y)dy},\qquad t\in(t_o,T).
\end{align}
Now, denoting by $\mathcal{G}^*$ the formal adjoint of $\mathcal{G}$ in \eqref{A-op}, \eqref{PDEbaru} gives
\begin{align}\label{F-psi2}
F_\psi(t)=\int^{y_2}_{y_1}{\big[R_{cc}(y)-\mathcal{G}\bar{u}(t,y)\big]\psi(y)dy}=
%\int^{y_2}_{y_1}{\big[R_{cc}(y)\psi(y)-\bar{u}(t,y)\mathcal{G}^*\psi(y)\big]dy}\nonumber\\
\int^{y_2}_{y_1}{\big[R_{cc}(y)\psi(y)+{u}(t,y)\frac{\partial}{\partial y}\big(\mathcal{G}^*\psi\big)(y)\big]dy}.
\end{align}

The map $t\mapsto F_\psi(t)$ is clearly continuous on $(t_o,T)$, its right-limit at $t_o$ is well defined thanks to dominated convergence and it is equal to
\begin{align}\label{F-lim}
F_\psi(t_o+):=\lim_{t\downarrow t_o}F_\psi(t)=\int^{y_2}_{y_1}{R_{cc}(y)\psi(y)dy},
\end{align}
by recalling that $u(t_o,y)\equiv0$ for $y\in[y_1,y_2]$. From strict concavity of $R$, there exists $\ell>0$ such that $R_{cc}(y)<-\ell$ in $[y_1,y_2]$ and hence $F_\psi(t_o+)<-\ell$. It follows that there exists $\epsilon>0$ such that $F_\psi(t)<-\ell/2$ for all $t\in(t_o,t_o+\epsilon]$ by continuity of $F_\psi$. Now, take $0<\delta<\epsilon$ arbitrary, then \eqref{F-psi} and Fubini's theorem give
\begin{align}\label{F-int}
-\frac{\ell}{2}(\epsilon-\delta)>&\int^{\epsilon}_{\delta}{F_\psi(t_o+s)ds}=\int^{y_2}_{y_1}{\big[\bar{u}(t_o+\epsilon,y)
-\bar{u}(t_o+\delta,y)\big]\psi(y)dy}\nonumber\\
=&\int^{y_2}_{y_1}{{u}_y(t_o+\epsilon,y)\psi(y)dy}+\int^{y_2}_{y_1}{u(t_o+\delta,y)\psi^\prime(y)dy}.
\end{align}
Taking limits as $\delta\to0$ we obtain
\begin{align}\label{F-int02}
-\frac{\ell}{2}\epsilon\ge\int^{y_2}_{y_1}{u_y(t_o+\epsilon,y)\psi(y)dy}=
-\int^{y_2}_{y_1}{v_y(t_o+\epsilon,y)\psi(y)dy}\ge0
\end{align}
since $y \mapsto v(t,y)$ is decreasing (cf.\ Proposition \ref{primeproprietav}). Therefore we reach a contradiction and $\hat{y}_{+}$ must be continuous on $(0,T)$.

$3.$ It remains only to prove continuity at $T$. Since $\hat{y}_+$ is left-continuous (cf.~$2.$~of Proposition \ref{freeboundaries}) then it is continuous on $[0,T]$. On the other hand, $\hat{y}_-$ is right-continuous and decreasing with $\hat{y}_-(T-)=R^{-1}_c\big(\frac{\bar{\mu}c_+}{f_C}\big)$ (see $6.$~of Proposition \ref{freeboundaries}). Then, it must be continuous on $[0,T]$ since $\hat{y}_-(t)\geq R^{-1}_c\big(\frac{\bar{\mu}c_+}{f_C}\big)$ for all $t\in[0,T]$.
\end{proof}

Recall that $R \in C^2((0,\infty))$ and it is strictly concave. We now make the following
\begin{Assumptions}
\label{integrabilitysmooth}
For any $ y_o \geq R_c^{-1}(\overline{\mu}c_{-}/f_C)$ there exists $\delta_o:=\delta_o(y_o)$ such that
\begin{align}\label{integrabilitysmooth-01}
 \tilde{\mathbb{E}}\bigg\{\int_0^T e^{-\bar{\mu}s}
\inf_{\{y:|y-y_o| \le\delta_o\}}R_{cc}(yC^0(s))\,ds\bigg\} > - \infty.
\end{align}
\end{Assumptions}
\noindent Since $R_{cc}$ is continuous away from zero and $C^0$ is a geometric Brownian motion, it is easy to see that Assumption \ref{integrabilitysmooth} is fulfilled by a large class of production functions meeting Inada conditions (cf.\ Assumption \ref{AssProfit}). That is the case for example of a Cobb-Douglas production function.

\begin{proposition}
\label{propsmoothfit}
Let Assumption \ref{AssProfit}, \ref{scrap} and \ref{integrabilitysmooth} hold. Then the smooth-fit property holds at the free-boundaries $\hat{y}_{+}$ and $\hat{y}_{-}$. That is,
\beq
\label{smoothpm}
v_{y}(t,\hat{y}_{-}(t)-) = 0\quad\text{and}\quad v_{y}(t,\hat{y}_{+}(t)+) = 0 \quad \text{for $t \in [0,T)$.}
\eeq
\end{proposition}
\begin{proof}
We start by proving the first in \eqref{smoothpm}. Fix $\epsilon > 0$ and $t_o \in [0,T)$ and let $(\sigma^{*}_{-\epsilon}, \tau^{*}_{-\epsilon})$ be optimal in
$v(t_o,\hat{y}_{-}(t_o) - \epsilon)$ in the sense of (\ref{stoppingtimesv}). Since the free-boundary $\hat{y}_-$ is monotone decreasing, it is not hard to show that
\begin{align}\label{smooth00}
\lim_{\epsilon\to 0}\tau^*_{-\epsilon}=0, \quad \mbox{a.s.}
\end{align}
by the law of iterated logarithm at zero for Brownian motion.

Take $\sigma^{*}:=\sigma^{*}(t_o, \hat{y}_{-}(t_o))$ as in (\ref{stoppingtimesv}) and adopt the sub-optimal stopping strategy
$(\sigma^*,\tau^*_{-\epsilon})$ in both the optimization problems with value functions $v(t_o,\hat{y}_-(t_o))$ and $v(t_o,\hat{y}_-(t_o)
-\epsilon)$. Then, using that $y \mapsto v(t,y)$ is decreasing (cf.\ Proposition \ref{primeproprietav}) we obtain
$$0 \leq  v(t_o, \hat{y}_-(t_o) - \epsilon) - v(t_o, \hat{y}_-(t_o)) \leq \tilde{\mathbb{E}}\bigg\{\int_0^{\sigma^* \wedge \tau^{*}_{-\epsilon}}\hspace{-6pt} e^{-\bar{\mu} s} \Big[R_c\big((\hat{y}_{-}(t_o)-\epsilon)C^0(s)\big)-R_c\big(\hat{y}_{-}(t_o)C^0(s)\big)\Big] ds\bigg\}$$
%\begin{align}
%\label{smooth01}
%0 \leq &\, v(t_o, \hat{y}_-(t_o) - \epsilon) - v(t_o, \hat{y}_-(t_o)) \nonumber\\
%\leq &\, \tilde{\mathbb{E}}\bigg\{\int_0^{\sigma^* \wedge \tau^{*}_{-\epsilon}} e^{-\bar{\mu} s} \Big[R_c\big((\hat{y}_{-}
%(t_o)-\epsilon)C^0(s)\big)-R_c\big(\hat{y}_{-}(t_o)C^0(s)\big)\Big] ds\bigg\}
%\end{align}
and an application of the mean value theorem gives
\begin{align}\label{smooth02}
0 \leq & \,v(t_o, \hat{y}_-(t_o) - \epsilon) - v(t_o, \hat{y}_-(t_o)) \leq
-\epsilon\tilde{\mathbb{E}}\bigg\{\int_0^{\sigma^* \wedge \tau^{*}_{-\epsilon}} e^{-\bar{\mu} s}
R_{cc}\big(\xi_{\epsilon}C^0(s)\big) ds\bigg\}
\end{align}
for some $\xi_\epsilon\in[\hat{y}_-(t_o)-\epsilon,\hat{y}_-(t_o)]$.
Thanks to Assumption \ref{integrabilitysmooth}, fixed $y_o:=\hat{y}_-(t_o)$, we can always find $\delta_o> \epsilon$, such that
\eqref{integrabilitysmooth-01} holds. Then, dividing \eqref{smooth02} by $\epsilon$ we have
\begin{align}\label{smooth03}
0 \leq & \frac{v(t_o, y_o - \epsilon) - v(t_o, y_o)}{\epsilon} \leq
\tilde{\mathbb{E}}\bigg\{-\int_0^{\sigma^* \wedge \tau^{*}_{-\epsilon}} e^{-\bar{\mu} s}
\inf_{y\in[y_o-\delta_o,y_o]}R_{cc}\big(yC^0(s)\big) ds\bigg\},
\end{align}
for all $\epsilon<\delta_o$. Note that
\begin{align}\label{smooth04}
-\int_0^{\sigma^* \wedge \tau^{*}_{-\epsilon}} e^{-\bar{\mu} s}
\inf_{y\in[y_o-\delta_o,y_o]}R_{cc}\big(yC^0(s)\big) ds\le -\int_0^{T} e^{-\bar{\mu} s}
\inf_{y\in[y_o-\delta_o,y_o]}R_{cc}\big(yC^0(s)\big) ds=:H
\end{align}
%is uniformly bounded from above by
%\begin{align}
%\label{smooth05}
%H:=
%\end{align}
and $H$ is $\tilde{\mathbb{P}}$--integrable by Assumption \ref{integrabilitysmooth}. Therefore, Fatou's lemma, \eqref{smooth00}
and \eqref{smooth03} imply the first equation of \eqref{smoothpm}.
%\begin{align}\label{smooth06}
%0 \leq & \lim_{\epsilon\to0}\frac{v(t_o, \hat{y}_-(t_o) - \epsilon) - v(t_o, \hat{y}_-(t_o))}{\epsilon} \leq0.
%\end{align}
%and hence \eqref{smoothmeno}.
To prove the second one in \eqref{smoothpm} arguments as above seem not to be applicable. In fact, fixed $t_o \in [0,T)$, if we take $(\sigma^{*}_{+\epsilon}, \tau^{*}_{+\epsilon})$ optimal for $v(t_o,\hat{y}_{+}(t_o)+ \epsilon)$, then it might happen that $\tilde{\mathbb{P}}\big(\lim_{\epsilon\to0}\sigma^*_{+\epsilon}>0\big)>0$ %we would like to have $\lim_{\epsilon \rightarrow 0}\sigma^{*}_{+\epsilon} = 0$ a.s. However, that does not seem obvious
since $\hat{y}_{+}$ is only proved to be continuous and decreasing. Roughly speaking, we cannot exclude the case that $\hat{y}{\,'}_{+}(t_o) = -\infty$ at countably many points $t_o$. To avoid this difficulty, we shall adopt a different argument that extends \cite{PeskShir}, Chapter IV, Section $9.3$, to the present setting of a Zero-Sum Game on finite time-horizon and with a running cost.

Let $h$ be a $C^2$ solution on $(0,\infty)$ of the second-order ordinary differential equation $\mathcal{L}h(y)=R_c(y)$.
Fix $(t_o,y) \in [0,T) \times (0,\infty)$ and let $\rho$ be a stopping time. Then, from $\textit{i)}$ of Proposition \ref{semiharmonic} one has
\begin{eqnarray}
\label{vh}
v(t_o,y) &\hspace{-0.25cm} \leq \hspace{-0.25cm}& \tilde{\mathbb{E}}\bigg\{e^{-\bar{\mu}(\rho \wedge \tau^{*})}v(t_o + \rho \wedge \tau^{*}, yC^0(\rho \wedge \tau^{*})) + \int_0^{\rho \wedge \tau^{*}}e^{-\bar{\mu}s}R_c(yC^0(s))ds\bigg\} \nonumber \\
&\hspace{-0.25cm} \leq \hspace{-0.25cm}& \tilde{\mathbb{E}}\bigg\{v(t_o + \rho \wedge \tau^{*}, yC^0(\rho \wedge \tau^{*})) + \int_0^{\rho}R_c(yC^0(s))ds\bigg\}. %\\
%&\hspace{-0.25cm} = \hspace{-0.25cm}& \tilde{\mathbb{E}}\Big\{v(t_o + \rho \wedge \tau^{*}, yC^0(\rho \wedge \tau^{*})) + h(yC^0(\rho)) \Big\} - h(y), \nonumber
\end{eqnarray}
Therefore, it follows
\beq
\label{veh}
v(t_o,y) + h(y) \leq \tilde{\mathbb{E}}\Big\{v(t_o + \rho \wedge \tau^{*}, yC^0(\rho \wedge \tau^{*})) + h(yC^0(\rho))\Big\}
\eeq
by Dynkin formula and the definition of $h$. For any $\alpha > 0$ we define the hitting time
$\tau_{\alpha}:=\inf\{s \geq 0: yC^0(s) = \alpha\}$. Take $0 < c < y < d < \overline{y}_{-}$ and set $\rho:=\tau_c \wedge \tau_d$. Then $\rho \wedge \tau^{*} = \rho \wedge (T-t_o)$ and (\ref{veh}) gives
\begin{align}
\label{veh2}
v(t_o,y)\hspace{-1pt} +\hspace{-1pt} h(y)  \leq&
%\tilde{\mathbb{E}}\Big\{\hspace{-2pt}v(t_o\hspace{-1pt} +\hspace{-1pt} \rho, yC^0(\rho))\mathds{1}_{\{\rho < T-t_o\}}\hspace{-2pt}\Big\} %\nonumber \\
%& & \hspace{0.8cm}
%\hspace{-1pt}+\hspace{-1pt} \frac{c_{-}}{f_C}\tilde{\mathbb{E}}\Big\{\hspace{-1pt}\mathds{1}_{\{\rho \geq T-t_o\}}\hspace{-1pt}\Big\}\hspace{-1pt} + \hspace{-1pt}h(c)\tilde{\mathbb{P}}(\tau_c\hspace{-1pt} <\hspace{-1pt} \tau_d)\hspace{-1pt} +\hspace{-1pt} h(d)\tilde{\mathbb{P}}(\tau_d\hspace{-1pt} <\hspace{-1pt} \tau_c) \nonumber\\
%\hspace{-0.25cm}
%= &
\,\tilde{\mathbb{E}}\Big\{v(t_o + \tau_c, c)\mathds{1}_{\{\rho < T-t_o\}}\mathds{1}_{\{\tau_c < \tau_d\}} + v(t_o + \tau_d, d)\mathds{1}_{\{\rho < T-t_o\}}\mathds{1}_{\{\tau_d < \tau_c\}}\Big\} \nonumber \\
 &  +\, \frac{c_{-}}{f_C}\tilde{\mathbb{E}}\Big\{\mathds{1}_{\{\rho \geq T-t_o\}}\Big\} + h(c)\tilde{\mathbb{P}}(\tau_c < \tau_d) + h(d)\tilde{\mathbb{P}}(\tau_d < \tau_c).
\end{align}
Recall now that $t \mapsto v(t,y)$ is decreasing (cf.\ Proposition \ref{primeproprietav}), that $v(t_o,y) \geq \frac{c_{-}}{f_C}$ for any $y \in (0,\infty)$, and that $v(T,c) = v(T,d) = c_{-}/f_C$. Hence (\ref{veh2}) implies
\begin{eqnarray}
\label{veh3}
v(t_o,y) + h(y)
& \hspace{-0.25cm} \leq \hspace{-0.25cm}& v(t_o,c)\tilde{\mathbb{E}}\Big\{\mathds{1}_{\{\rho < T-t_o\}}\mathds{1}_{\{\tau_c < \tau_d\}}\Big\} + v(t_o,d)\tilde{\mathbb{E}}\Big\{\mathds{1}_{\{\rho < T-t_o\}}\mathds{1}_{\{\tau_d < \tau_c\}}\Big\} \\
& & \hspace{0.8cm} +\,v(t_o,c)\tilde{\mathbb{E}}\Big\{\mathds{1}_{\{\rho \geq T-t_o\}}\mathds{1}_{\{\tau_c < \tau_d\}}\Big\} + v(t_o,d)\tilde{\mathbb{E}}\Big\{\mathds{1}_{\{\rho \geq T-t_o\}}\mathds{1}_{\{\tau_d < \tau_c\}}\Big\} \nonumber \\
& & \hspace{0.8cm} +\,h(c)\tilde{\mathbb{P}}(\tau_c < \tau_d) + h(d)\tilde{\mathbb{P}}(\tau_d < \tau_c) \nonumber \\
& \hspace{-0.25cm} = \hspace{-0.25cm}& [v(t_o,c) + h(c)]\tilde{\mathbb{P}}(\tau_c < \tau_d) + [v(t_o,d) + h(d)]\tilde{\mathbb{P}}(\tau_d < \tau_c) \nonumber \\
& \hspace{-0.25cm} = \hspace{-0.25cm}& [v(t_o,c) + h(c)]\frac{S(d)-S(y)}{S(d)-S(c)} + [v(t_o,d) + h(d)]\frac{S(y)-S(c)}{S(d)-S(c)}, \nonumber
\end{eqnarray}
where $S$ is the scale function of $C^0$ (see, e.g., \cite{RevuzYor}, Chapter VII, Section $3$).
It follows that, for fixed $t_o \in [0,T)$, the function $y \mapsto u(t_o,y)$, defined by $u(t_o,y):=v(t_o,y) + h(y)$, is $S$-convex (see, e.g., \cite{RevuzYor}, p.\ $546$). Therefore
$$y \mapsto \frac{u(t_o,y) - u(t_o,x)}{S(y) - S(x)}$$
is increasing on $[c,d]$, for every $x \in (c,d)$.

Notice now that $S(\cdot)\in C^1((0,\infty))$ and recall that $h \in C^2((0,\infty))$.
Then, for arbitrary but fixed $t_o \in [0,T)$, we can apply arguments as in \cite{PeskShir}, Chapter IV, Section $9.3$, and obtain $u_y(t_o, \hat{y}_{+}(t_o)+) = h'(\hat{y}_{+}(t_o))$. Hence $v_y(t_o, \hat{y}_{+}(t_o)+) = 0$ for $t_o\in[0,T)$, by definition of $u$.
\end{proof}

In the next Theorem we will find non-linear integral equations that characterize the free-boundaries and the value function $v$ of our zero-sum optimal stopping game.
\begin{theorem}
\label{Volterra}
Under Assumption \ref{AssProfit}, \ref{scrap} and \ref{integrabilitysmooth}, the value function $v$ of problem \eqref{defv2} has the following representation
\begin{align}\label{vrepr}
v(t,y)=& \,e^{-\bar{\mu}(T-t)}\frac{c_-}{f_C}+\int^{T-t}_0{e^{-\bar{\mu}s}\tilde{\mathbb{E}}\bigg\{
R_c(yC^0(s))\mathds{1}_{\{\hat{y}_+(t+s)<yC^0(s)<\hat{y}_-(t+s)\}}\bigg\}}ds\nonumber\\
&+\frac{\bar{\mu}}{f_C}\int^{T-t}_0{e^{-\bar{\mu}s}\Big[c_+\tilde{\mathbb{P}}\Big(yC^0(s)<\hat{y}_+(t+s)\Big)+c_-
\tilde{\mathbb{P}}\Big(yC^0(s)>\hat{y}_-(t+s)\Big)\Big]ds},
\end{align}
where $\hat{y}_+$ and $\hat{y}_-$ are continuous, decreasing curves solving the coupled integral equations
\begin{align}
\frac{c_{\pm}}{f_C}=&\, e^{-\bar{\mu}(T-t)}\frac{c_-}{f_C}+\int^{T-t}_0{e^{-\bar{\mu}s}\tilde{\mathbb{E}}\bigg\{
R_c(\hat{y}_{\pm}(t)C^0(s))\mathds{1}_{\{\hat{y}_+(t+s)<\hat{y}_{\pm}(t)C^0(s)<\hat{y}_-(t+s)\}}\bigg\}}ds\nonumber\\
&+\frac{\bar{\mu}}{f_C}\int^{T-t}_0{e^{-\bar{\mu}s}\Big[c_+\tilde{\mathbb{P}}\Big(\hat{y}_{\pm}(t)C^0(s)<\hat{y}_+
(t+s)\Big)+c_-\tilde{\mathbb{P}}\Big(\hat{y}_{\pm}(t)C^0(s)>\hat{y}_-(t+s)\Big)\Big]ds}
\label{int-eq-}
\end{align}
%\begin{align}
%\frac{c_+}{f_C}=&\, e^{-\bar{\mu}(T-t)}\frac{c_-}{f_C}+\int^{T-t}_0{e^{-\bar{\mu}s}\tilde{\mathbb{E}}\bigg\{
%R_c(\hat{y}_+(t)C^0(s))\mathds{1}_{\{\hat{y}_+(t+s)<\hat{y}_+(t)C^0(s)<\hat{y}_-(t+s)\}}\bigg\}}ds\nonumber\\
%&+\frac{\bar{\mu}}{f_C}\int^{T-t}_0{e^{-\bar{\mu}s}\Big[c_+\tilde{\mathbb{P}}\Big(\hat{y}_+(t)C^0(s)<\hat{y}_+
%(t+s)\Big)+c_-\tilde{\mathbb{P}}\Big(\hat{y}_+(t)C^0(s)>\hat{y}_-(t+s)\Big)\Big]ds},\label{int-eq+}
%\end{align}
with boundary conditions
\begin{align}\label{int-eq-bd}
\hspace{-45pt}\hat{y}_-(T)=R^{-1}_c\Big(\frac{\bar{\mu} c_{-}}{f_C}\Big) \qquad\&\qquad \hat{y}_+(T)=0
\end{align}
and such that
\begin{align}\label{int-eq-bd2}
R^{-1}_c\Big(\frac{\bar{\mu}c_-}{f_C}\Big)<\hat{y}_-(t)<+\infty\qquad\&\qquad0<\hat{y}_+(t)< R^{-1}_c\Big(\frac{\bar{\mu}c_+}{f_C}\Big)\quad\textrm{for all $t\in[0,T)$.}
\end{align}
\end{theorem}
\begin{proof}
We aim at applying local time-space formula by \cite{Peskir}, Theorem $3.1$. In order to do so we will verify that $v$ fulfils suitable sufficient conditions. That is, for $\eta>0$ arbitrary but fixed
\begin{align}
&\big(\partial_t+\mathcal{L}-\bar{\mu}\big)v\quad\textrm{is bounded on any compact $K$ in $[0,T-\eta]\times(0,+\infty)$}\label{lts01}\\
\nonumber\\
&t\mapsto v_y(t,\hat{y}_\pm(t)\pm)=0\quad\textrm{is continuous on $[0,T-\eta]$},&\label{lts02}\\
\nonumber\\
&t\mapsto v(t,\hat{y}_\pm(t)\pm)\quad\textrm{is of bounded variation on $[0,T-\eta]$}.&\label{lts03}
\end{align}
Conditions \eqref{lts01} and \eqref{lts02} follow from \eqref{freeb-pr} and the smooth-fit property (cf.\ Proposition \ref{propsmoothfit}).

To verify \eqref{lts03} we need a bit more work.
There exists $\delta_\eta:=\delta(\eta)>0$ such that $\hat{y}_+(t)>\delta_\eta$ for all $t\in[0,T-\eta]$,
by $3.$ of Proposition \ref{freeboundaries} and Theorem \ref{cont-bdry}. Also, there exist:
$L_\eta:=L(\delta_\eta)>0$
such that $\big|v_y(t,y)\big|\le L_\eta$ for all $y\in[\hat{y}_+(t)-\delta_\eta,\hat{y}_+(t)+\delta_\eta]$,
$t\in[0,T-\eta]$ by \eqref{lts02} and $R_\eta:=R(\delta_\eta)>0$ such that $R_c(y)\le R_\eta$ on
$y\ge\hat{y}_+(T - \eta)-\delta_\eta$. From these bounds, $2.$ of Proposition \ref{primeproprietav}, and the first equation
in \eqref{freeb-pr} we find
\begin{align}\label{lts04}
\frac{\sigma^2_C}{2}y^2v_{yy}\ge -R_\eta-\big|\hat{\mu}_C+\sigma^2_C/2\big|\,L_\eta y+\bar{\mu}\frac{c_-}{f_C},
\quad y\in[\hat{y}_+(t)-\delta_\eta,\hat{y}_+(t)+\delta_\eta],\:\:t\in[0,T-\eta].
\end{align}
Now, divide both sides of \eqref{lts04} by $\frac{\sigma^2_C}{2}y^2$ to obtain
\begin{align}\label{lts05}
v_{yy}\ge -\Big(\frac{2R_\eta}{\sigma^2_C}\Big)\frac{1}{y^2}-\Big(\frac{2\big|\hat{\mu}_C+\sigma^2_C/2\big|\,L_\eta}{\sigma^2_C}\Big)
\frac{1}{y},
\quad y\in[\hat{y}_+(t)-\delta_\eta,\hat{y}_+(t)+\delta_\eta],\:\:t\in[0,T-\eta],
\end{align}
and recall that $\hat{y}_+(T-\eta)\le\hat{y}_+(t)$ for $t\in[0,T-\eta]$. If we define
\begin{align}\label{lts06}
F(y):=-\int^y_{\hat{y}_+(T-\eta)-\delta_\eta}\int^z_{\hat{y}_+(T-\eta)-\delta_\eta}\Big[
\Big(\frac{2R_\eta}{\sigma^2_C}\Big)\frac{1}{r^2}+\Big(\frac{2\big|\hat{\mu}_C+\sigma^2_C/2\big|\,L_\eta}{\sigma^2_C}\Big)
\frac{1}{r}\Big]dr\,dz,
\end{align}
then $y\mapsto \Lambda(t,y):=[v-F](t,y)$ is convex on $[\hat{y}_+(t),\hat{y}_+(t)+\delta_\eta]$ and on
$[\hat{y}_+(t)-\delta_\eta,\hat{y}_+(t)]$ for all $t\in[0,T-\eta]$. Also, it is easily verified that
$t\mapsto \Lambda_y(t,\hat{y}_\pm(t)\pm)$ is
continuous on $[0,T-\eta]$ by \eqref{lts02} and \eqref{lts06}. From \eqref{lts01} and \eqref{lts06} we obtain that
$\partial_t\Lambda+\mathcal{L}\Lambda-\bar{\mu}\Lambda$ is bounded on any compact $K\subset[0,T-\eta]\times(0,+\infty)$.
It follows that $t\mapsto\Lambda(t,\hat{y}_\pm(t)\pm)$ is of bounded variation on $[0,T-\eta]$, by \cite{Peskir}, Remark 3.2
(see in particular eqs.~(3.35)--(3.36) therein). Therefore \eqref{lts03} holds as $t\mapsto F(\hat{y}_\pm(t)\pm)$
is of bounded variation and hence $v$ has to be such as well.

The local time-space formula may now be employed on $[0,T-\eta]\times(0,+\infty)$. For any $(t,y)\in[0,T-\eta]\times(0,+\infty)$ and arbitrary $s \leq T- \eta - t$, we have
\begin{align}
\label{lts07}
e^{-\bar{\mu}s}v(t\hspace{-1pt}+\hspace{-1pt}s,yC^0(s))\hspace{-1pt}=&\, v(t,y)\hspace{-1pt}+\hspace{-2pt}\int^s_0{\hspace{-5pt}e^{-\bar{\mu}u}\big(\partial_t v+\mathcal{L}v-\bar{\mu}v\big)(t\hspace{-1pt}+\hspace{-1pt}u,yC^0(u))
\mathds{1}_{\{\hat{y}_+(t+u)<yC^0(u)<\hat{y}_-(t+u)\}}du}\nonumber\\
&-\frac{\bar{\mu}}{f_C}\hspace{-2pt}\int^{s}_0{\hspace{-5pt}e^{-\bar{\mu}u}\Big[c_+\mathds{1}_{\{yC^0(u)<\hat{y}_+(t+u)\}}\hspace{-1pt}+\hspace{-1pt}c_-
\mathds{1}_{\{yC^0(u)>\hat{y}_-(t+u)\}}\Big]}du\hspace{-1pt}+\hspace{-1pt}M(s),
\end{align}
by \eqref{smoothpm} and with $M:=\{M(s), s \in [0,T-\eta-t]\}$ a local martingale. We can take
expectations in \eqref{lts07} and use standard localization arguments to cancel the local martingale term. Then,
setting $s=T-\eta-t$ we obtain
\begin{align}\label{lts08}
v(t,y)=&\, \tilde{\mathbb{E}}\bigg\{e^{-\bar{\mu}(T-t-\eta)}v(T-\eta,yC^0(T-t-\eta))\bigg\} \nonumber \\
&+\int^{T-t-\eta}_0
{e^{-\bar{\mu}u}\tilde{\mathbb{E}}\bigg\{
R_c(yC^0(u))\mathds{1}_{\{\hat{y}_+(t+u)<yC^0(u)<\hat{y}_-(t+u)\}}\bigg\}}du\\
&+\frac{\bar{\mu}}{f_C}\int^{T-t-\eta}_0{e^{-\bar{\mu}u}\Big[c_+\tilde{\mathbb{P}}\Big(yC^0(u)<\hat{y}_+(t+u)\Big)+c_-
\tilde{\mathbb{P}}\Big(yC^0(u)>\hat{y}_-(t+u)\Big)\Big]du} \nonumber
\end{align}
by \eqref{freeb-pr} and after rearranging terms. Since \eqref{lts08} holds for any $\eta>0$, in the limit as
$\eta\downarrow0$ we find \eqref{vrepr} by dominated convergence and continuity of $v$.
If we now take $y=\hat{y}_+(t)$ (or $y=\hat{y}_-(t)$) in both sides of \eqref{vrepr} we easily obtain \eqref{int-eq-} %(or \eqref{int-eq-})
by recalling that $v(t,\hat{y}_\pm(t))=c_\pm/f_C$.
\end{proof}
\vspace{+3cm}
\begin{center}
FIGURE 1
\end{center}
\begin{figure}[h!]
\centering
\caption{A computer drawing of the free-boundaries obtained by numerical solution of \eqref{int-eq-} with $R_c(y)=y^{-\frac{1}{2}}$, $\bar{\mu}=0.8$, $\mu_C=0.2$, $\sigma_C=1$, $f_C=1$, $c_+=1$, $c_-=0.8$ and $T=1$. The lower line represents $\hat{y}_+$ and the upper line represents $\hat{y}_-$.}\label{figura}
\end{figure}

It is now natural to ask whether the couple $(\hat{y}_+,\hat{y}_-)$ is the unique solution of problem \eqref{int-eq-}--\eqref{int-eq-bd2}. In many optimal stopping problems it is possible to show that the free-boundary is in fact the unique solution of a (single) non-linear integral equation of Volterra type similar to \eqref{int-eq-} (see for instance \cite{PeskShir}, Chapter VII, Section $25$). The proof crucially relies on the characterization of the value function of a sup (inf) problem as the smallest (largest) super-harmonic (sub-harmonic) function lying above (below) the obstacle.
%goes as follows: one assumes that another solution $c(t)$ exists and associates to that a suitable function, often denoted by $U^c$; then martingale arguments and properties of $U^c$ lead to a number of contradictions thus implying uniqueness. From a careful reading of such proof one evinces that it is crucial to show that the value function $V$ of a sup (inf) problem is always larger (smaller) than $U^c$.
In our zero-sum optimal stopping game instead a further complication arises from the fact that $v$ is a saddle point. Assuming that $(\alpha_+,\alpha_-)$ is another solution of \eqref{int-eq-}--\eqref{int-eq-bd2} and trying to argue as in \cite{PeskShir}, Theorem $25.3$, we define a function $u_\alpha:[0,T]\times (0,\infty) \mapsto \mathbb{R}$ by
\begin{align}\label{mm01}
u_\alpha(t,y):=& \,e^{-\bar{\mu}(T-t)}\frac{c_-}{f_C}+\int^{T-t}_0{e^{-\bar{\mu}s}\tilde{\mathbb{E}}\bigg\{
R_c(yC^0(s))\mathds{1}_{\{\alpha_+(t+s)<yC^0(s)<\alpha_-(t+s)\}}\bigg\}}ds\nonumber\\
&+\frac{\bar{\mu}}{f_C}\int^{T-t}_0{e^{-\bar{\mu}s}\Big[c_+\tilde{\mathbb{P}}\Big(yC^0(s)<\alpha_+(t+s)\Big)+c_-
\tilde{\mathbb{P}}\Big(yC^0(s)>\alpha_-(t+s)\Big)\Big]ds}.
\end{align}
It seems rather hard to prove that $u_\alpha$ of \eqref{mm01} is either larger or smaller than $v$. However, this issue may be overcome by adapting arguments from \cite{YYZ12} %\footnote{This paper was brought to our attention by G.~Peskir after we had already finished our previous version of the present work. In that version we could prove uniqueness in a smaller class of functions by considering only couples $(\alpha_+,\alpha_-)$ solving \eqref{int-eq-}--\eqref{int-eq-bd2} and such that $c_-/f_C\le u_\alpha\le c_+/f_C$. This was to some extent equivalent to proving the existence of a unique triple $(v,\hat{y}_+,\hat{y}_-)$ solving the free-boundary problem \eqref{freeb-pr}.}
(cf.~in particular Lemmas 6.3 and 6.4) to show that $c_-/f_C\le u_\alpha\le c_+/f_C$.

\begin{lemma}\label{cci0}
Assume $(\alpha_+,\alpha_-)$ is another solution of \eqref{int-eq-}--\eqref{int-eq-bd2} and let $u_\alpha$ be as is \eqref{mm01}. Then, for any $t\in[0,T)$ one has that $u_\alpha(t,y)=c_-/f_C$ for $y\ge\alpha_-(t)$ and $u_\alpha(t,y)=c_+/f_C$ for $y\le\alpha_+(t)$.
\end{lemma}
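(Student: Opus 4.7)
The plan is to read \eqref{mm01} as a Feynman--Kac representation, build the associated $\tilde{\mathbb{P}}$-martingale, and evaluate it via optional sampling at the first hitting time of the candidate boundary; the identities \eqref{int-eq+} and \eqref{int-eq-} then collapse the right-hand side to $c_\pm/f_C$. I would organise the argument in three steps.

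\textbf{Step 1 (martingale identification).} For fixed $(t,y)\in[0,T)\times(0,\infty)$ set
\begin{equation*}
\Lambda(r,z) := R_c(z)\mathds{1}_{\{\alpha_+(r)<z<\alpha_-(r)\}} + \frac{\bar{\mu}c_+}{f_C}\mathds{1}_{\{z<\alpha_+(r)\}} + \frac{\bar{\mu}c_-}{f_C}\mathds{1}_{\{z>\alpha_-(r)\}},
\end{equation*}
and
\begin{equation*}
M_s := e^{-\bar{\mu}s}\,u_\alpha(t+s,yC^0(s)) + \int_0^s e^{-\bar{\mu}u}\Lambda(t+u,yC^0(u))\,du,\qquad s\in[0,T-t].
\end{equation*}
Applying \eqref{mm01} with initial datum $(t+s,yC^0(s))$ and using the Markov property of $C^0$ under $\tilde{\mathbb{P}}$ (together with the tower property), one rewrites the tail integral in \eqref{mm01} to conclude $M_s=\tilde{\mathbb{E}}\{M_{T-t}\mid\mathcal{F}_s\}$, so $M$ is a $\tilde{\mathbb{P}}$-martingale.

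\textbf{Step 2 (lower region).} For $y\le\alpha_+(t)$ define
\begin{equation*}
\rho_+ := \inf\{s\in[0,T-t]:yC^0(s)\ge\alpha_+(t+s)\}\wedge(T-t).
\end{equation*}
Since $\alpha_+$ is continuous on $[0,T]$ with $\alpha_+(T)=0$ by \eqref{int-eq-bd}, while $yC^0(T-t)>0$ $\tilde{\mathbb{P}}$-a.s., path continuity of $yC^0$ gives $\rho_+<T-t$ a.s.\ and $yC^0(\rho_+)=\alpha_+(t+\rho_+)$. For $u<\rho_+$ one has $yC^0(u)<\alpha_+(t+u)$, so $\Lambda(t+u,yC^0(u))=\bar{\mu}c_+/f_C$. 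Optional sampling at the bounded stopping time $\rho_+$ and \eqref{int-eq+} (which gives $u_\alpha(t+\rho_+,\alpha_+(t+\rho_+))=c_+/f_C$) yield
\begin{equation*}
u_\alpha(t,y)=M_0=\tilde{\mathbb{E}}\{M_{\rho_+}\}=\frac{c_+}{f_C}\tilde{\mathbb{E}}\{e^{-\bar{\mu}\rho_+}\}+\frac{\bar{\mu}c_+}{f_C}\tilde{\mathbb{E}}\Big\{\int_0^{\rho_+}e^{-\bar{\mu}u}\,du\Big\}=\frac{c_+}{f_C}.
\end{equation*}

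\textbf{Step 3 (upper region).} For $y\ge\alpha_-(t)$ set $\rho_- := \inf\{s\in[0,T-t]:yC^0(s)\le\alpha_-(t+s)\}\wedge(T-t)$. The event $\{\rho_-=T-t\}$ may carry positive probability; on it, evaluating \eqref{mm01} at $t=T$ yields $u_\alpha(T,yC^0(T-t))=c_-/f_C$, while on $\{\rho_-<T-t\}$, path continuity and \eqref{int-eq-} give $u_\alpha(t+\rho_-,\alpha_-(t+\rho_-))=c_-/f_C$. On $[0,\rho_-)$ we have $\Lambda(t+u,yC^0(u))=\bar{\mu}c_-/f_C$, and the same computation as in Step 2 produces $u_\alpha(t,y)=c_-/f_C$.

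\textbf{Main obstacle.} The delicate point is justifying optional sampling, i.e.\ uniform integrability of $(M_s)_{s\le T-t}$. Boundedness of the first two summands in $\Lambda$ is trivial; the genuine issue is the term $R_c(yC^0(s))$ on the continuation slice, because $R_c$ blows up at $0$ and, near $s=T-t$, the process $yC^0(s)$ may approach the decreasing boundary $\alpha_+(t+s)$ that itself collapses to $0$. This is controlled by combining Assumption \ref{integrabilitysmooth} with standard moment estimates for the geometric Brownian motion $C^0$, ensuring that $\int_0^{T-t}e^{-\bar{\mu}s}|\Lambda(t+s,yC^0(s))|ds\in L^1(\tilde{\mathbb{P}})$ and hence $\sup_{s\le T-t}|M_s|\in L^1(\tilde{\mathbb{P}})$; once this is in place the three steps close the argument. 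A secondary, minor point is that throughout one uses continuity of $\alpha_\pm$ so that $yC^0$ touches the boundary at $\rho_\pm$; this should be part of the regularity implicitly demanded of an ``other solution'' to \eqref{int-eq-}--\eqref{int-eq-bd2}.
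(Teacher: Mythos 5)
Your proposal is correct and follows essentially the same route as the paper: the same martingale $U^{t,y}_\alpha$ from \eqref{mm02}, optional sampling at the first hitting times of $\alpha_\pm$, and the same observation that $\{\rho_+=T-t\}\subset\{yC^0(T-t)=0\}$ is $\tilde{\mathbb{P}}$-null because $\alpha_+(T)=0$. The only small slip is that the integrability of the running term should be sourced from Lemma \ref{stimaconcavitaR} (moments of $R_c(yC^0(s))$) rather than Assumption \ref{integrabilitysmooth}, which concerns $R_{cc}$.
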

\begin{proof}
Set $Y^y(s):=yC^0(s)$, under $\tilde{\mathbb{P}}$ to simplify notation. The map $(t,y)\mapsto u_\alpha(t,y)$ is continuous and $u_\alpha(t,\alpha_\pm(t))=c_\pm/f_C$ for $t\in[0,T)$, by \eqref{int-eq-}.
It is not hard to verify that the process $U^{t,y}_{\alpha}:=\{U^{t,y}_{\alpha}(s), s \in [0,T-t]\}$, defined by
\begin{align}\label{mm02}
U^{t,y}_{\alpha}(s):=e^{-\bar{\mu}s}u_\alpha(t+s,Y^y(s))&+\int^s_0{e^{-\bar{\mu}u}R_c(Y^y(u))\mathds{1}_{\{\alpha_+(t+u)<Y^y(u)<
\alpha_-(t+u)\}}du}\\
&+\frac{\bar{\mu}}{f_C}\int^s_0{e^{-\bar{\mu}u}\big[c_+\mathds{1}_{\{Y^y(u)<\alpha_+(t+u)\}}+c_-\mathds{1}_{\{Y^y(u)>
\alpha_-(t+u)\}}\big]du}\nonumber
\end{align}
is a $\tilde{\mathbb{P}}$-martingale.

Consider $y<\alpha_+(t)$ for a given $t\in[0,T)$, define the stopping time
%\begin{align}\label{mm06}
$$\tau_{\alpha_+}(t,y):=\inf\big\{s\in[0,T-t)\,:\,Y^y(s)\ge \alpha_{+}(t+s)\big\}\wedge(T-t)$$
%\end{align}
and as usual set $\tau_{\alpha_+}:=\tau_{\alpha_+}(t,y)$, to simplify notation. From the martingale property of $U^{t,y}_\alpha$ we obtain
\begin{align}\label{mm07}
u_\alpha(t,y)=\tilde{\mathbb{E}}\bigg\{e^{-\bar{\mu}\tau_{\alpha_+}}u_\alpha(t+\tau_{\alpha_+},Y^y(\tau_{\alpha_+}))
+\bar{\mu}\frac{c_+}{f_C}\int^{\tau_{\alpha_+}}_0{e^{-\bar{\mu}s}ds}\bigg\}.
\end{align}
Note that on the set $\{\tau_{\alpha_+}<T-t\}$ one has $Y^y(\tau_{\alpha_+})=\alpha_+(t+\tau_{\alpha_+})$, by continuity of $Y^y$ and $\alpha_+$. On the other hand, $\{\tau_{\alpha_+}=T-t\}\subset\{Y^y(T-t)=0\}$, since $\alpha_+$ is continuous and $\alpha_+(T)=0$; however, $\{Y^y(T-t)=0\}$ is a $\tilde{\mathbb{P}}$-null set and hence we conclude that $u_\alpha(t+\tau_{\alpha_+},Y^y(\tau_{\alpha_+}))=c_+/f_C$, $\tilde{\mathbb{P}}$-a.s. Then, from \eqref{mm07} we obtain
\begin{align}\label{mm08}
u_\alpha(t,y)=\tilde{\mathbb{E}}\bigg\{e^{-\bar{\mu}\tau_{\alpha_+}}\frac{c_+}{f_C}
+\bar{\mu}\frac{c_+}{f_C}\int^{\tau_{\alpha_+}}_0{e^{-\bar{\mu}s}ds}\bigg\}=\frac{c_+}{f_C} \quad\textrm{for all $y<\alpha_+(t)$ and $t\in[0,T)$}.
\end{align}
Similar arguments lead to $u_\alpha(t,y)=c_-/f_C$ for $y>\alpha_-(t)$ and $t\in[0,T)$.
%Take $t\in[0,T)$ arbitrary and $y>\alpha_-(t)$. Define the stopping time
%\begin{align}\label{mm03}
%\tau_{\alpha_-}(t,y):=\inf\big\{s\in[0,T-t)\,:\,Y^y(s)\le \alpha_-(t+s)\big\}\wedge(T-t)
%\end{align}
%and for simplicity set $\tau_{\alpha_-}:=\tau_{\alpha_-}(t,y)$. From the martingale property of $U^{t,y}_{\alpha}$ in \eqref{mm02} we easily find
%\begin{align}\label{mm04}
%u_\alpha(t,y)=\tilde{\mathbb{E}}\bigg\{e^{-\bar{\mu}\tau_{\alpha_-}}u_\alpha(t+\tau_{\alpha_-},Y^y(\tau_{\alpha_-}))
%+\bar{\mu}\frac{c_-}{f_C}\int^{\tau_{\alpha_-}}_0{e^{-\bar{\mu}s}ds}\bigg\}.
%\end{align}
%Continuity of the process $Y^y$ implies that $Y^y(\tau_{\alpha_-})=\alpha_{-}(t+\tau_{\alpha_-})$ on the set %$\{\tau_{\alpha_-} < T-t\}$. From \eqref{mm01}, \eqref{mm03} and the continuity of $u_\alpha$ we observe that %$u_\alpha(t+\tau_{\alpha_-},Y^y(\tau_{\alpha_-}))=c_-/f_C$, $\tilde{\mathbb{P}}$-a.s., therefore
%\begin{align}\label{mm05}
%u_\alpha(t,y)=\tilde{\mathbb{E}}\bigg\{e^{-\bar{\mu}\tau_{\alpha_-}}\frac{c_-}{f_C}
%+\bar{\mu}\frac{c_-}{f_C}\int^{\tau_{\alpha_-}}_0{e^{-\bar{\mu}s}ds}\bigg\}=\frac{c_-}{f_C} \quad\textrm{for all %$y>\alpha_-(t)$ and $t\in[0,T)$.}
%\end{align}
\end{proof}

\begin{lemma}\label{cciuno}
Assume $(\alpha_+,\alpha_-)$ is another solution of \eqref{int-eq-}--\eqref{int-eq-bd2} and let $u_\alpha$ be as is \eqref{mm01}. Then, for any fixed $t\in[0,T)$ the map $y\mapsto u_\alpha(t,y)$ is $C^1$ on $(0,+\infty)$.
\end{lemma}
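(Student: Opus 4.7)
The strategy is to split $(0,+\infty)$ into the three regions identified by $\alpha_+(t)$ and $\alpha_-(t)$, verify $C^1$-regularity on each open region, and then match the derivatives across the two interface points.

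On $(0,\alpha_+(t))$ and on $(\alpha_-(t),+\infty)$ the function $u_\alpha(t,\cdot)$ is constant (equal to $c_+/f_C$ and $c_-/f_C$ respectively) by Lemma \ref{cci0}, hence trivially $C^1$ with vanishing derivative. On the open interval $(\alpha_+(t),\alpha_-(t))$ we use that under $\tilde{\mathbb{P}}$ the process $C^0(s)$ is log-normal with smooth density $p(s,\cdot)$; writing
\begin{align*}
u_\alpha(t,y)=&\,e^{-\bar{\mu}(T-t)}\tfrac{c_-}{f_C}+\int_0^{T-t}\!\!e^{-\bar{\mu}s}\!\int_{\alpha_+(t+s)/y}^{\alpha_-(t+s)/y}\!R_c(y z)\,p(s,z)\,dz\,ds\\
&+\tfrac{\bar{\mu}}{f_C}\int_0^{T-t}\!\!e^{-\bar{\mu}s}\!\left[c_+\!\!\int_0^{\alpha_+(t+s)/y}\!\!\!p(s,z)dz+c_-\!\!\int_{\alpha_-(t+s)/y}^{+\infty}\!\!\!p(s,z)dz\right]ds,
\end{align*}
we differentiate under the integral sign. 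The derivative of the $R_c$-term produces $R_{cc}$, for which Assumption \ref{integrabilitysmooth} supplies the dominated-convergence majorant (exploiting continuity and boundedness of $\alpha_\pm$ on $[0,T]$ together with $\alpha_+(t)>0$ for $t<T$), while the probability terms yield integrals of $p(s,\cdot)$ against continuous functions. Continuity of the resulting $\partial_y u_\alpha$ on $(\alpha_+(t),\alpha_-(t))$ is then immediate.

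The remaining difficulty—and the main obstacle—is to show $C^1$-regularity at the two interface points, which amounts to a smooth-fit statement $\partial_y u_\alpha(t,\alpha_\pm(t)\pm)=0$ from the interior side. The martingale property of $U^{t,y}_\alpha$ established in Lemma \ref{cci0} (equation \eqref{mm02}) provides the right substitute for the optimality arguments used in Proposition \ref{propsmoothfit}. Concretely, for the upper boundary we fix $\epsilon>0$, consider $u_\alpha(t,\alpha_-(t))-u_\alpha(t,\alpha_-(t)-\epsilon)$, apply the optional sampling theorem to $U^{t,y}_\alpha$ with the stopping time
\[
\tau^\epsilon:=\inf\{s\in[0,T-t):(\alpha_-(t)-\epsilon)C^0(s)\notin(\alpha_+(t+s),\alpha_-(t+s))\}\wedge(T-t),
\]
and exploit $\tau^\epsilon\downarrow 0$ $\tilde{\mathbb{P}}$-a.s.\ (which follows from continuity of $\alpha_\pm$, monotonicity, and the law of the iterated logarithm applied to $\tilde{W}$ at zero, exactly as in step \eqref{smooth00}) together with the $R_{cc}$-bound of Assumption \ref{integrabilitysmooth} to divide by $\epsilon$ and pass to the limit.

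For the lower boundary $\alpha_+(t)$ the above direct argument fails for the same reason it failed in Proposition \ref{propsmoothfit}: one cannot rule out $\alpha_+'(t_o)=-\infty$ at countably many points. I would therefore mimic the second part of that proposition: introduce $h\in C^2((0,\infty))$ solving $\mathcal{L}h=R_c$, use the martingale property of $U^{t,y}_\alpha$ to show that $y\mapsto u_\alpha(t,y)+h(y)$ is $S$-convex (with $S$ the scale function of $C^0$) on a neighbourhood of $\alpha_+(t)$, and then invoke \cite{PeskirSmoothFitAngles}, Theorem $2.3$, together with the constant value $c_+/f_C$ on $(0,\alpha_+(t)]$ to obtain $\partial_y u_\alpha(t,\alpha_+(t)+)=h'(\alpha_+(t))-h'(\alpha_+(t))=0$. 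Combining this with the interior smoothness established above yields the $C^1$-regularity on the whole of $(0,+\infty)$.
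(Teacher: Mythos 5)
Your outer-region step and the idea of differentiating the kernel representation in the interior are fine, but the way you handle the two interface points contains a genuine gap. The smooth-fit machinery of Proposition \ref{propsmoothfit} cannot be transported to $u_\alpha$: every step of that proof uses that $v$ is the value of the game. The inequality \eqref{smooth01} comes from playing the \emph{same suboptimal} stopping pair in two optimization problems, and the $S$-convexity in \eqref{veh}--\eqref{veh3} rests on the subharmonic characterization $i)$ of Proposition \ref{semiharmonic}, on $v\ge c_-/f_C$, and on $t\mapsto v(t,y)$ being decreasing. For $u_\alpha$ none of this is available at this stage: $u_\alpha$ is merely defined by the formula \eqref{mm01}, the only tool is the exact martingale identity \eqref{mm02}, and neither monotonicity in $t$ nor monotonicity in $y$ (nor even the bounds $c_-/f_C\le u_\alpha\le c_+/f_C$) is yet known --- indeed \eqref{obst} is proved \emph{after}, and by means of, the present lemma, so invoking such properties here is circular. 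Concretely, in your upper-boundary argument the difference $u_\alpha(t,\alpha_-(t))-u_\alpha(t,\alpha_-(t)-\epsilon)$ produces, via optional sampling, not only the difference of running-cost integrals but also the difference of the stopped terminal values, and without monotonicity of $u_\alpha$ you cannot discard the latter. A secondary issue: your parametrization puts $y$ inside $R_c(yz)$, so differentiation produces $R_{cc}$, and Assumption \ref{integrabilitysmooth} only covers neighbourhoods of points $y_o\ge R_c^{-1}(\bar{\mu}c_-/f_C)$; it gives no majorant near the lower boundary $\alpha_+(t)$, which lies below $R_c^{-1}(\bar{\mu}c_+/f_C)$.

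The paper's proof sidesteps all of this analytically and needs no smooth-fit argument at the interfaces. Writing $u_\alpha$ through the kernels \eqref{cciloro02-1}--\eqref{cciloro02-3}, the variable $y$ appears only inside the log-normal density, never in the integration limits or in the argument of $R_c$; hence $\partial_y K_1$ involves $R_c$ (not $R_{cc}$) multiplied by the derivative of the Gaussian, which by the H\"older estimate \eqref{deriv02} is bounded by $Cs^{-1/2}y^{-1}\tilde{\mathbb{E}}\{R_c^2(yC^0(s))\}^{1/2}$, an integrable singularity at $s=0$. Uniform (in $y$ on compacts) convergence of the truncated integrals and of their $y$-derivatives as $\delta\downarrow0$ (equations \eqref{cciloro04}--\eqref{cciloro05}) then yields \eqref{cciloro06}, i.e.\ $C^1$ regularity of $y\mapsto u_\alpha(t_o,y)$ on a whole interval straddling both $\alpha_+(t_o)$ and $\alpha_-(t_o)$: the boundary points are simply not special for the representation \eqref{mm01}. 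If you want to repair your plan, replace the two interface arguments by this uniform-convergence-of-derivatives argument (or change variables so that $y$ sits only in the density before differentiating).
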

\begin{proof}
From Lemma \ref{cci0} we know already that $y\mapsto u_\alpha(t,y)$ is $C^1$ on $(0,\alpha_+(t)]\cup[\alpha_-(t),\infty)$, therefore it remains to prove continuity across the two curves $(\alpha_+,\alpha_-)$.

Recalling \eqref{mm01}, the function $u_\alpha$ may be written as
\begin{align}\label{cciloro01}
u_\alpha(t,y):=&\,e^{-\bar{\mu}(T-t)}\frac{c_-}{f_C}+\int^{T-t}_0{e^{-\bar{\mu}s}K_1
\big(y;s,\alpha_+(t+s),\alpha_-(t+s)\big)ds}
\nonumber\\
&+\frac{\bar{\mu}}{f_C}\int^{T-t}_0{e^{-\bar{\mu}s}\Big[c_+\,K_2\big(y;s,\alpha_+(t+s)\big)+c_-\,
K_3\big(y;s,\alpha_-(t+s)\big)\Big]ds},
\end{align}
with
\begin{align}
&K_1\big(y;s,\alpha,\beta\big):=\int^{\beta}_{\alpha}{R_c(z)p_C(y,s;z)dz},\quad K_2\big(y;s,\alpha\big):=\int^{\alpha}_0{p_C(y,s;z)dz},\label{cciloro02-1}\\
&K_3\big(y;s,\beta\big):=\int^{+\infty}_\beta{\hspace{-6pt}p_C(y,s;z)dz}\quad\text{and}\quad p_C(y,s;z):=\frac{\exp\big\{-\frac{1}{2\sigma^2_C\,s}\big[\ln(z/y)-\hat{\mu}_C\,s\big]^2\big\}}{\sqrt{2\pi\,s}\,\sigma_C\,z}.\label{cciloro02-3}
\end{align}
%\begin{align}
%K_1\big(y;s,\alpha,\beta\big)&:=\int^{\beta}_{\alpha}{R_c(z)\frac{1}{\sqrt{2\pi\, %s}\,\sigma_C\,z}e^{-\frac{1}{2\sigma^2_C\,s}\big[\ln(z/y)-\hat{\mu}_C\,s\big]^2}dz}\label{cciloro02-1}\\
%K_2\big(y;s,\alpha\big)&:=\int^{\alpha}_0{\frac{1}{\sqrt{2\pi\,s}\,\sigma_C\,z}
%e^{-\frac{1}{2\sigma^2_C\,s}\big[\ln(z/y)-\hat{\mu}_C\,s\big]^2}dz}\label{cciloro02-2}\\
%K_3\big(y;s,\beta\big)&:=\int^{+\infty}_\beta{\frac{1}{\sqrt{2\pi\,s}\,\sigma_C\,z}
%e^{-\frac{1}{2\sigma^2_C\,s}\big[\ln(z/y)-\hat{\mu}_C\,s\big]^2}dz}.\label{cciloro02-3}
%\end{align}
For simplicity we denote by $K_i(y;s,\alpha,\beta)$, $i=1,2,3$, expressions in \eqref{cciloro02-1} and \eqref{cciloro02-3}.

Fix $t_o\in[0,T)$ and $\delta>0$ such that $2\delta\le T-t_o$, then $\alpha_+(t_o)>\epsilon(t_o)=:\epsilon_o>0$ and $\alpha_+(T-\delta)>\epsilon^\prime(\delta)=:\epsilon^\prime_\delta>0$ by \eqref{int-eq-bd2}. Hence, it is not hard to verify that $(y,s)\mapsto K_i(t_o,y;s,\alpha_+(t_o+s),\alpha_-(t_o+s))$ and $(y,s)\mapsto \frac{\partial K_i}{\partial y}(t_o,y;s,\alpha_+(t_o+s),\alpha_-(t_o+s))$, $i=1,2,3$, are continuous and bounded on $(y,s)\in[\epsilon_o/2,\kappa]\times[\delta,T-t_o-\delta]$ for arbitrary $\kappa>\alpha_-(0)$. It follows that
\begin{align}\label{cciloro03}
\frac{\partial}{\partial y}\int^{T-t_o-\delta}_\delta \hspace{-8pt}e^{-\bar{\mu}s}K_i
\big(y;s,\alpha_+(t_o+s),\alpha_-(t_o+s)\big)ds=\hspace{-4pt}\int^{T-t_o-\delta}_\delta{\hspace{-8pt}e^{-\bar{\mu}s}\frac{\partial K_i}{\partial y}
\big(y;s,\alpha_+(t_o+s),\alpha_-(t_o+s)\big)ds}
\end{align}
for $y\in[\epsilon_o/2,\kappa]$ and $i=1,2,3$.
In the proof of Lemma \ref{stimaconcavitaR} we provide estimates for $K_1$ and simple bounds for $K_2$, $K_3$ that imply
\begin{align}\label{cciloro04}
\hspace{-8pt}\int^{T-t_o-\delta}_\delta \hspace{-10pt}e^{-\bar{\mu}s}K_i
\big(y;s,\alpha_+(t_o+s),\alpha_-(t_o+s)\big)ds
\rightarrow\int^{T-t_o}_0 \hspace{-10pt}e^{-\bar{\mu}s}K_i
\big(y;s,\alpha_+(t_o+s),\alpha_-(t_o+s)\big)ds
\end{align}
as $\delta\downarrow 0$ uniformly for $y\in[\epsilon_o/2,\kappa]$, with $i=1,2,3$. Also, it is shown in Section \ref{cciappendix} that
\begin{align}\label{cciloro05}
\hspace{-10pt}\int^{T-t_o-\delta}_\delta \hspace{-12pt}e^{-\bar{\mu}s}\frac{\partial K_i}{\partial y}
\big(y;s,\alpha_+(t_o\hspace{-2pt}+\hspace{-2pt}s),\alpha_-(t_o\hspace{-2pt}+\hspace{-2pt}s)\big)ds\rightarrow\hspace{-5pt}\int^{T-t_o}_0 \hspace{-10pt}e^{-\bar{\mu}s}\frac{\partial K_i}{\partial y}
\big(y;s,\alpha_+(t_o\hspace{-2pt}+\hspace{-2pt}s),\alpha_-(t_o\hspace{-2pt}+\hspace{-2pt}s)\big)ds
\end{align}
as $\delta\downarrow 0$ uniformly for $y\in[\epsilon_o/2,\kappa]$, with $i=1,2,3$ as well. Therefore, it follows that
\begin{align}\label{cciloro06}
\hspace{-10pt}\frac{\partial}{\partial y}\int^{T-t_o}_0 \hspace{-12pt}e^{-\bar{\mu}s}K_i
\big(y;s,\alpha_+(t_o\hspace{-2pt}+\hspace{-2pt}s),\alpha_-(t_o\hspace{-2pt}+\hspace{-2pt}s)\big)ds=\hspace{-4pt}\int^{T-t_o}_0{\hspace{-12pt}e^{-\bar{\mu}s}\frac{\partial K_i}{\partial y}
\big(y;s,\alpha_+(t_o\hspace{-2pt}+\hspace{-2pt}s),\alpha_-(t_o\hspace{-2pt}+\hspace{-2pt}s)\big)ds}
\end{align}
for $y\in[\epsilon_o/2,\kappa]$, with $i=1,2,3$ and then $y\mapsto u_\alpha(t_o,y)$ is $C^1$ on $y\in[\epsilon_o/2,\kappa]$, i.e.~across $\alpha_+(t_o)$ and $\alpha_-(t_o)$. Since $t_o$ and $\kappa$ are arbitrary, $y\mapsto u_\alpha(t,y)$ is $C^1$ on $(0,+\infty)$, for all $t<T$ by Lemma \ref{cci0}.
\end{proof}
The martingale property \eqref{mm02}, Lemma \ref{cciuno} and standard arguments imply that $u_\alpha$ solves a free-boundary problem as \eqref{freeb-pr} with $\hat{y}_+$ and $\hat{y}_-$ replaced by $\alpha_+$ and $\alpha_-$, respectively. We define a set $\mathcal{C}_{\alpha}:=\big\{(t,y)\,:\,\alpha_+(t)<y<\alpha_-(t),\,t\in(0,T)\big\}$ and notice that $\frac{\partial^2 u_\alpha}{\partial t\partial y}$ and $\frac{\partial^3 u_\alpha}{\partial y^3}$ exist and are continuous in $\mathcal{C}_\alpha$ (cf.~\cite{FriedmanPDE}, Chapter 3, Theorem 10). Recall the operator $\mathcal{G}$ of \eqref{A-op} and set $\bar{u}:=\frac{\partial u_\alpha}{\partial y}$, then $\bar{u}\in C^{1,2}(\mathcal{C}_\alpha)$ and solves
\begin{align}
&\bar{u}_t(t,y)+\mathcal{G}\bar{u}(t,y)=-R_{cc}(y)\qquad\textrm{in $\mathcal{C}_\alpha$}\label{cciloro07-1}
\end{align}
with $\bar{u}(t,\alpha_\pm(t))=0$ and $\bar{u}(T,y)=0$ by Lemma \ref{cciuno} and \eqref{mm01}. We find now useful bounds on $u_\alpha$ by using properties of $\bar{u}$.
\begin{proposition}\label{obst}
One has
%\begin{align}\label{obst}
%\hspace{+80pt}
$c_-/f_C\le u_\alpha(t,y)\le c_+/f_C$ for all $(t,y)\in[0,T]\times(0,+\infty)$.
%\end{align}
\end{proposition}
\begin{proof}
The result is obvious in $(0,\alpha_+(t)]\cup[\alpha_-(t),+\infty)$, $t\in[0,T]$ by Lemma \ref{cci0}; it remains to show that the same holds in $\mathcal{C}_\alpha$. It is sufficient to prove that $y\mapsto u_\alpha(t,y)$ is decreasing for all $t\in(0,T)$. In order to do so, it is useful to introduce functions $\hat{u}(t,y):=e^{(\hat{\mu}_C+\sigma^2_C/2-\bar{\mu})t}\bar{u}(t,y)$ and $\hat{R}_{cc}(t,y):=e^{(\hat{\mu}_C+\sigma^2_C/2-\bar{\mu})t}R_{cc}(y)$. Then recalling \eqref{A-op} and \eqref{cciloro07-1} it follows that
\begin{align}\label{cciloro07-2}
\hat{u}_t(t,y)+\frac{1}{2}\sigma^2_Cy^2\hat{u}_{yy}(t,y)+\big(2\sigma^2_C-{\mu}_C\big)y\hat{u}_y(t,y)=
-\hat{R}_{cc}(t,y)\quad\textrm{in $\mathcal{C}_\alpha$}
\end{align}
with $\hat{u}(t,\alpha_\pm(t))=0$ and $\hat{u}(T,y)=0$.

Assume that there exists $\epsilon>0$ and $(t_\epsilon,y_\epsilon)\in\mathcal{C}_\alpha$ such that $\hat{u}(t_\epsilon,y_\epsilon)>\epsilon$. We may define a set $\Omega_\epsilon:=\big\{(t,y)\,:\,\hat{u}(t,y)>\epsilon/2\big\}$ and observe that $(t_\epsilon,y_\epsilon)\in\Omega_\epsilon$, that $\Omega_\epsilon\subset\mathcal{C}_\alpha$ and that the set $\overline{\Omega}_\epsilon\setminus\overline{\Omega}_\epsilon\cap\mathcal{C}_\alpha$ consists at most of the points $(T,\alpha_\pm(T))$ by Lemma \ref{cciuno}. Moreover, $\Omega_\epsilon$ itself has positive $dt\otimes dy$ measure since $\hat{u}$ is continuous in $\mathcal{C}_\alpha$. On the other hand, the second order differential operator in \eqref{cciloro07-2} may be associated to a diffusion $X^{y_\epsilon}:=\big\{X^{y_\epsilon}(s),\,s\in[0,T-t_\epsilon]\big\}$ that solves
\begin{align}\label{cciloro08}
dX^{y_\epsilon}(s)=\big(2\sigma^2_C-{\mu}_C\big)X^{y_\epsilon}(s)ds+\sigma_CX^{y_\epsilon}d\tilde{W}(s)\qquad\text{for $s > 0$ and $X^{y_\epsilon}(0)=y_\epsilon$.}
\end{align}
We now set $\rho_{\epsilon}:=\inf\big\{s\in[0,T-t_\epsilon]\,:\,X^{y_\epsilon}(s)\notin \Omega_\epsilon\big\}$ and note that $\rho_\epsilon>0$ $\tilde{\mathbb{P}}$-a.s. Recall that $R_{cc}<0$ and use \eqref{cciloro07-2} and Dynkin's formula to obtain
$\hat{u}(t_\epsilon,y_\epsilon)<\tilde{\mathbb{E}}\big\{\hat{u}
(t_\epsilon+\rho_\epsilon, X^{y_\epsilon}(\rho_\epsilon))\big\}\le \epsilon/2$
which contradicts the definition of $(t_\epsilon,y_\epsilon)$. Therefore, since $\epsilon>0$ is arbitrary it follows $\hat{u}\le 0$ and hence $\bar{u}\le 0$ and $y\mapsto u_\alpha(t,y)$ is decreasing.
\end{proof}
\begin{theorem}
\label{esistenzaVolterra}
The couple $(\hat{y}_+(t),\hat{y}_-(t))$ is the unique solution of \eqref{int-eq-} in the class of continuous, decreasing functions such that \eqref{int-eq-bd}, \eqref{int-eq-bd2} hold.
\end{theorem}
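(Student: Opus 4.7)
The plan is to prove that the function $u_\alpha$ defined in \eqref{mm01} for any candidate $(\alpha_+,\alpha_-)$ satisfying \eqref{int-eq-}--\eqref{int-eq-bd2} coincides with the saddle value $v$ of \eqref{defv2}; the identification $\alpha_\pm=\hat{y}_\pm$ then follows from \eqref{ypiu}--\eqref{ymeno}. The core of the argument is to exhibit $u_\alpha$ as a saddle value of the Dynkin game, realised by the pair of threshold stopping times $(\tau_{\alpha_+},\tau_{\alpha_-})$ introduced in \eqref{mm06} and \eqref{mm03}.

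First, I would apply the martingale property of $U^{t,y}_\alpha$ from \eqref{mm02} stopped at $\tau_{\alpha_+}\wedge\tau_{\alpha_-}$, together with the terminal condition $u_\alpha(T,\cdot)=c_-/f_C$ read off \eqref{mm01} and the boundary values supplied by Lemma \ref{cci0}, to obtain the representation
\[
u_\alpha(t,y)=\Psi(t,y;\tau_{\alpha_+}(t,y),\tau_{\alpha_-}(t,y)),
\]
with $\Psi$ as in \eqref{Jey}. Continuity of $\alpha_\pm$ and of $yC^0(\cdot)$ ensures that the boundary values of $u_\alpha$ are matched exactly at $\tau_{\alpha_+}\wedge\tau_{\alpha_-}$.

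Next I would establish the saddle-point identities
\[
\sup_{\tau\in[0,T-t]}\Psi(t,y;\tau_{\alpha_+},\tau)=u_\alpha(t,y)=\inf_{\sigma\in[0,T-t]}\Psi(t,y;\sigma,\tau_{\alpha_-}).
\]
For the first one, fix an arbitrary stopping time $\tau\in[0,T-t]$ and expand $u_\alpha(t,y)$ by the optional sampling theorem applied to $U^{t,y}_\alpha$ at $\tau\wedge\tau_{\alpha_+}$. Splitting the difference $u_\alpha(t,y)-\Psi(\tau_{\alpha_+},\tau)$ across the events $\{\tau_{\alpha_+}\le\tau,\ \tau_{\alpha_+}<T-t\}$, $\{\tau<\tau_{\alpha_+}\}$ and $\{\tau=\tau_{\alpha_+}=T-t\}$, the discounted stopping contributions are nonnegative thanks to $u_\alpha\ge c_-/f_C$ from \eqref{obst}, while the residual integrand simplifies to
\[
\tilde{\mathbb{E}}\bigg\{\int_0^{\tau\wedge\tau_{\alpha_+}}\hspace{-6pt}e^{-\bar{\mu}s}\Big[\tfrac{\bar{\mu}c_-}{f_C}-R_c(yC^0(s))\Big]\mathds{1}_{\{yC^0(s)>\alpha_-(t+s)\}}\,ds\bigg\}\ge 0,
\]
since \eqref{int-eq-bd2} forces $\alpha_-(t+s)>R^{-1}_c(\bar{\mu}c_-/f_C)$ and hence $R_c(yC^0(s))\le \bar{\mu}c_-/f_C$ on $\{yC^0(s)>\alpha_-(t+s)\}$. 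Taking $\tau=\tau_{\alpha_-}$ gives the matching upper bound by the representation of Step~1, so the supremum is attained. The second identity is obtained by the mirror-image computation with $\sigma\wedge\tau_{\alpha_-}$, where the integrand carries the opposite sign because $\alpha_+(t+s)<R^{-1}_c(\bar{\mu}c_+/f_C)$ yields $R_c(yC^0(s))\ge \bar{\mu}c_+/f_C$ on $\{yC^0(s)<\alpha_+(t+s)\}$ and $u_\alpha\le c_+/f_C$ by \eqref{obst}.

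Combining these identities with the minimax equality in \eqref{defv2} gives
\[
v(t,y)\le\sup_\tau\Psi(\tau_{\alpha_+},\tau)=u_\alpha(t,y)=\inf_\sigma\Psi(\sigma,\tau_{\alpha_-})\le v(t,y),
\]
so $u_\alpha\equiv v$ on $[0,T]\times(0,\infty)$. Lemma \ref{cci0} then gives $v(t,y)=c_+/f_C$ for $y\le\alpha_+(t)$ and $v(t,y)=c_-/f_C$ for $y\ge\alpha_-(t)$, whence $\alpha_+(t)\le\hat{y}_+(t)$ and $\alpha_-(t)\ge\hat{y}_-(t)$ by \eqref{ypiu}--\eqref{ymeno}. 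Strict inequality is excluded by the PDE satisfied by $u_\alpha$ inside $\mathcal{C}_\alpha$ (consequence of \eqref{mm02} and Lemma \ref{cciuno}): if, e.g., $\alpha_+(t_o)<\hat{y}_+(t_o)$, then $u_\alpha\equiv c_+/f_C$ on a nonempty open subset of $\mathcal{C}_\alpha$, which forces $R_c\equiv\bar{\mu}c_+/f_C$ there and contradicts strict monotonicity of $R_c$. The analogous argument at $\alpha_-$ completes the proof. I expect the principal difficulty to lie in the sign bookkeeping of Step~2, where both directions rely delicately on the strict separation provided by \eqref{int-eq-bd2}; once those inequalities are secured, the rest is structural.
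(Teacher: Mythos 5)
Your argument is correct, but it takes a genuinely different route from the paper's, so let me compare the two. One bookkeeping point first: you cite \eqref{mm03} and \eqref{mm06} for your threshold stopping times, but those are the exit times from the stopping regions used in Lemma \ref{cci0} (indeed \eqref{mm03} is identically zero for $y$ inside the continuation region of $u_\alpha$); the stopping times your computation actually uses are $\rho_{\alpha_+}$ and $\rho_{\alpha_-}$ of \eqref{mm15} and \eqref{mm10}, i.e.\ the first times $Y^y$ falls to $\alpha_+$, respectively rises to $\alpha_-$. With that relabelling your two inequalities are sound: the case analysis on $\{\rho_{\alpha_+}\le\tau\}$, $\{\tau<\rho_{\alpha_+}\}$, $\{\tau=\rho_{\alpha_+}=T-t\}$ uses exactly $u_\alpha\ge c_-/f_C$ from \eqref{obst} and $u_\alpha(\cdot,\alpha_+(\cdot))=c_+/f_C$, the residual integrand is $(\bar{\mu}c_-/f_C-R_c)\mathds{1}_{\{Y^y>\alpha_-\}}\ge0$ by \eqref{int-eq-bd2}, and the mirror computation works with \eqref{obst} and the reverse inequality below $\alpha_+$; together with the minimax identity in \eqref{defv2} this yields $u_\alpha\equiv v$ with $(\rho_{\alpha_+},\rho_{\alpha_-})$ a saddle point. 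The paper never proves $u_\alpha\equiv v$: it establishes the one-sided inclusions \eqref{up} by contradiction, pairing the $v$-optimal time $\sigma^*$ (resp.\ $\tau^*$) with the hitting time of the opposite $\alpha$-curve and using \eqref{mm12}, \eqref{mm17}, \eqref{obst} and Lemma \ref{cci0} to force a strict inequality between $u_\alpha$ and $v$ at a point where it cannot hold; it then runs a second, symmetric contradiction (this time also invoking the local time-space representation \eqref{lts07} of $v$) to exclude $\alpha_+<\hat{y}_+$ and $\alpha_->\hat{y}_-$. Your route is more conceptual and delivers the stronger intermediate statement that any solution of the system generates the value function itself, from which $\alpha_+\le\hat{y}_+$ and $\alpha_-\ge\hat{y}_-$ follow immediately via Lemma \ref{cci0} and \eqref{ypiu}--\eqref{ymeno}; your exclusion of strict inclusion through the interior equation for $u_\alpha$ (a constant value on a nonempty open subset of $\mathcal{C}_\alpha$ would force $R_c\equiv\bar{\mu}c_\pm/f_C$ on an open interval, contradicting strict concavity of $R$) is also legitimate, since that PDE is established in the paper just before the theorem. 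Both proofs ultimately rest on the same inputs — the martingale property \eqref{mm02}, Lemma \ref{cci0}, the bounds \eqref{obst} and the strict separation \eqref{int-eq-bd2} — so neither is materially cheaper; yours trades the paper's two contradiction arguments for one clean verification that the candidate pair solves the game.
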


\begin{proof}
Set again $Y^y(s):=yC^0(s)$, under $\tilde{\mathbb{P}}$ to simplify notation. Assume there exist two continuous functions $\alpha_-$ and $\alpha_+$ solving \eqref{int-eq-}--\eqref{int-eq-bd2} and take $u_\alpha$ as in \eqref{mm01}.

We shall now prove that $\alpha_+ \equiv \hat{y}_{+}$ and $\alpha_{-}\equiv \hat{y}_{-}$.
Initially we show that
\begin{align}\label{up}
\alpha_+(t)\le\hat{y}_+(t)\qquad\textrm{and}\qquad \alpha_-(t)\ge\hat{y}_-(t)\qquad\textrm{for all $t\in[0,T)$}.
\end{align}
Full details are only provided for the first of (\ref{up}) as the ones for the second can be obtained analogously.
Assume that there exists $t_o\in[0,T)$ such that $\hat{y}_+(t_o)<\alpha_+(t_o)$. Then, take $y_o\in(\hat{y}_+(t_o),\alpha_+(t_o))$ and define the stopping time
\begin{align}\label{mm10}
\rho_{\alpha_-}(t_o,y_o):=\inf\{s\in[0,T-t_o)\,:\,Y^{y_o}(s)\ge \alpha_-(t_o+s)\}\wedge(T-t_o).
\end{align}
Let $\sigma^*(t_o,y_o)$ be as in \eqref{stoppingtimesv} (or equivalently as in \eqref{stoppingtimesfrontiere}) and set $\rho_{\alpha_-}:=\rho_{\alpha_-}(t_o,y_o)$ and $\sigma^*:=\sigma^*(t_o,y_o)$ for simplicity. From the martingale property of $U^{t_o,y_o}_{\alpha}$ in \eqref{mm02} we obtain
\begin{align}\label{mm11}
u_\alpha(t_o,y_o)=\tilde{\mathbb{E}}\bigg\{&e^{-\bar{\mu}\sigma^*\wedge\rho_{\alpha_-}}u_\alpha
(t_o+\sigma^*\wedge\rho_{\alpha_-},Y^{y_o}(\sigma^*\wedge\rho_{\alpha_-}))\\
&+\int^{\sigma^*\wedge\rho_{\alpha_-}}_0
{e^{-\bar{\mu}s}\Big[R_c(Y^{y_o}(s))\mathds{1}_{\{Y^{y_o}(s)>\alpha_+(t_o+s)\}}+\bar{\mu}\frac{c_+}{f_C}
\mathds{1}_{\{Y^{y_o}(s)
<\alpha_+(t_o+s)\}}\Big]ds}\bigg\}.\nonumber
\end{align}
The first term in the expectation of \eqref{mm11} is such that
\begin{align}\label{mm13}
\hspace{-8pt}u_\alpha
(t\hspace{-2pt}+\hspace{-2pt}\sigma^*\hspace{-2pt}\wedge\hspace{-2pt}\rho_{\alpha_-},Y^{y_o}(\sigma^*\hspace{-2pt}\wedge\hspace{-2pt}\rho_{\alpha_-}))\hspace{-2pt}\le\hspace{-2pt}\frac{c_-}{f_C}
\mathds{1}_{\{\rho_{\alpha_-}<\sigma^*\}}\hspace{-2pt}+\hspace{-2pt}\frac{c_+}{f_C}
\mathds{1}_{\{\sigma^*\le\rho_{\alpha_-} \}}\mathds{1}_{\{\sigma^*<T-t\}}\hspace{-2pt}+\hspace{-2pt}\frac{c_-}{f_C}
\mathds{1}_{\{\rho_{\alpha_-}=\sigma^*=T-t\}}
\end{align}
by \eqref{mm01} and Proposition \ref{obst}.
Observe that all (continuous) sample paths starting from $y_o$ spend a strictly positive amount of time under the curve $\{\alpha_+(t_o+s),\,s\in[0,T-t_o)\}$ by continuity of $\alpha_+$. Moreover, from (\ref{int-eq-bd2}) we have
\begin{align}\label{mm12}
\textrm{$\bar{\mu}\frac{c_+}{f_C}<R_c(Y^{y_o}(s))$ on the set $\{Y^{y_o}(s)<\alpha_+(t_o+s).\}$}
\end{align}
Recall \eqref{Jey} and note that $\sigma^*\wedge\rho_{\alpha_-}>0$, $\tilde{\mathbb{P}}$-a.s., by continuity of $t\mapsto Y^{y_o}(t)$. Then, using \eqref{mm13} and \eqref{mm12} inside \eqref{mm11}, we find
$u_\alpha(t_o,y_o)<\Psi(t_o,y_o;\sigma^*,\rho_{\alpha_-})$ (cf.~\eqref{Jey}).
It follows that $u_\alpha(t_o,y_o) < v(t_o,y_o)$. However, $u_\alpha(t_o,y)=c_+/f_C$ for all $y\in(0, \alpha_+(t))$ by \eqref{mm08} and hence $v(t_o,y_o) > c_+/f_C$. This is a contradiction as $(t_o,y_o) \in \mathcal{C}$. Similarly, one can find analogous contradiction by assuming that there exists $t_o \in [0,T)$ such that $\alpha_-(t_o) < \hat{y}_{-}(t_o)$.

We show now that it must in fact be $\alpha_+ \equiv \hat{y}_+$ and $\alpha_- \equiv \hat{y}_-$. Again, we provide full details only for $\alpha_+$ as the other case follows by straightforward modifications. Assume that there exists $t_o\in[0,T)$ such that $\alpha_+(t_o)<\hat{y}_+(t_o)$. Take $y_o\in(\alpha_+(t_o),\hat{y}_+(t_o))$, set $\tau^*(t_o,y_o)$ as in \eqref{stoppingtimesv} and %define
\begin{align}\label{mm15}
\rho_{\alpha_+}(t_o,y_o):=\inf\{s\in[0,T-t_o)\,:\,Y^{y_o}(s)\le \alpha_+(t_o+s)\}\wedge(T-t_o).
\end{align}
Denote $\tau^* := \tau^*(t_o,y_o)$ and $\rho_{\alpha_+}:=\rho_{\alpha_+}(t_o,y_o)$ for simplicity. We now set $s\hspace{-1pt}:=\hspace{-1pt}\tau^*\hspace{-1pt}\wedge\hspace{-1pt}\rho_{\alpha_+}\hspace{-1pt}\wedge(T\hspace{-1pt}-\hspace{-1pt}t_o\hspace{-1pt}-\hspace{-1pt}\eta)$ in \eqref{lts07}, take expectation on both sides and pass to the limit as $\eta\to0$ to obtain
\begin{align}\label{mm16}
v(t_o,y_o)=&\tilde{\mathbb{E}}\Big\{e^{-\bar{\mu}\tau^*\wedge\rho_{\alpha_+}}v(t_o+\tau^*\wedge\rho_{\alpha_+},
Y^{y_o}(\tau^*
\wedge\rho_{\alpha_+}))
\nonumber\\
&+\int^{\tau^*\wedge\rho_{\alpha_+}}_0{e^{-\bar{\mu}s}\big[R_c(Y^{y_o}(s))\mathds{1}_{\{\hat{y}_+(t_o+s)<Y^{y_o}(s)\}}+
\frac{\bar{\mu}c_+}{f_C}\mathds{1}_{\{Y^{y_o}(s)<\hat{y}_+(t_o+s)\}}
\big]ds}\Big\}.
\end{align}
Since $\alpha_+(t)\le\hat{y}_+(t)$ for $t\in[0,T)$ (cf.\ (\ref{up})), it is not hard to see that $v(t_o+\rho_{\alpha_+},Y^{y_o}(\rho_{\alpha_+}))=\frac{c_+}{f_C}$ on ${\{\rho_{\alpha_+}\le\tau^*\}}\cap{\{\rho_{\alpha_+}<T-t_o\}}$. Again we notice that $\tau^*\wedge\rho_{\alpha_+}>0$, $\tilde{\mathbb{P}}$-a.s., by continuity of the sample paths of $Y^{y_o}$ and that from (\ref{int-eq-bd2})
\begin{align}\label{mm17}
\textrm{$\bar{\mu}\frac{c_+}{f_C}<R_c(Y^{y_o}(s))$ on the set $\{Y^{y_o}(s)<\hat{y}_+(t_o+s)\}$}.
\end{align}
Since all sample paths starting from $y_o$ spend a strictly positive amount of time below $\{\hat{y}_+(t_o+s),\,s\in[0,T-t_o)\}$ by continuity of $\hat{y}_+$, we obtain
%\begin{align}\label{mm18}
$v(t_o,y_o)<\Psi(t_o,y_o;\rho_{\alpha_+},\tau^*)$
%\tilde{\mathbb{E}}\bigg\{&e^{-\bar{\mu}\tau^*}\frac{c_-}{f_C}
%\mathds{1}_{\{\tau^*\le\rho_{\alpha_+}\}}\mathds{1}_{\{\rho_{\alpha_+}<T-t_o\}}+e^{-\bar{\mu}\rho_{\alpha_+}}
%\frac{c_+}{f_C}
%\mathds{1}_{\{\tau^*>\rho_{\alpha_+}\}}\nonumber\\
%&+e^{-\bar{\mu}(T-t_o)}\frac{c_-}{f_C}
%\mathds{1}_{\{\rho_{\alpha_+}=\tau^*=T-t_o\}}+\int^{\tau^*\wedge\rho_{\alpha_+}}_0{e^{-\bar{\mu}s}R_c(Y^{y_o}
%(s))ds}\bigg\},
%\end{align}
by (\ref{mm17}) (cf.~\eqref{Jey}).
On the other hand, recalling Proposition \ref{obst}, \eqref{up} and using the martingale property of $U^{t_o,y_o}_{\alpha}$ as in \eqref{mm11} we obtain
%\begin{align}\label{mm19}
$u_\alpha(t_o,y_o)\ge\Psi(t_o,y_o;\rho_{\alpha_+},\tau^*)$
%\tilde{\mathbb{E}}\bigg\{&e^{-\bar{\mu}\tau^*}\frac{c_-}{f_C}
%\mathds{1}_{\{\tau*\le\rho_{\alpha_+}\}}\mathds{1}_{\{\rho_{\alpha_+}<T-t_o\}}+e^{-\bar{\mu}\rho_{\alpha_+}}
%\frac{c_+}{f_C}
%\mathds{1}_{\{\tau^*>\rho_{\alpha_+}\}}\nonumber\\
%&+e^{-\bar{\mu}(T-t_o)}\frac{c_-}{f_C}
%\mathds{1}_{\{\rho_{\alpha_+}=\tau^*=T-t_o\}}+\int^{\tau^*\wedge\rho_{\alpha_+}}_0
%{e^{-\bar{\mu}s}R_c(Y^{y_o}(s))ds}\bigg\}.
%\end{align}
so that %It follows from \eqref{mm18} and \eqref{mm19} that
$u_\alpha(t_o,y_o)> v(t_o,y_o)$. However, $v(t_o,y_o)=c_+/f_C$ for $y_o\in(\alpha_+(t_o),\hat{y}_+(t_o))$ by \eqref{freeb-pr} and hence $u_\alpha(t_o,y_o)> c_+/f_C$, contradicting Proposition \ref{obst}. Therefore $\alpha_+\equiv\hat{y}_+$ and by obvious extensions of arguments above one also finds $\alpha_-\equiv\hat{y}_-$.
\end{proof}

\section{The Optimal Control Strategy}
\label{optimalcontrolstrategy}

In Theorem \ref{existence} we proved existence and uniqueness of the optimal control process $\nu^{*}$, but we provided no information about its nature. In this Section we characterize the optimal control in terms of the two free-boundaries $\hat{y}_{+}$ and $\hat{y}_{-}$ (cf.\ Proposition \ref{proprietacontinuazione}) of the zero-sum optimal stopping game (\ref{defv2}).
We shall see that the optimal investment-disinvestment strategy for problem (\ref{optimalproblem}) consists in keeping the optimally controlled diffusion $C^{y,\nu^{*}}$ inside the closure of the continuation region, with the optimal controls behaving as the local times of $C^{y,\nu^{*}}$ at $\hat{y}_{+}$ and $\hat{y}_{-}$. To accomplish that we will rely on results in \cite{Burdzy} on the pathwise construction of a process in a space-time region defined by two moving boundaries.
Recall (\ref{capacity}) and (\ref{nubarradefinizione}) and introduce the following
\begin{problem}
\label{Skorohodproblemdef}
Let $t \in [0,T]$ and $y > 0$ be arbitrary but fixed. Given the two free-boundaries $\hat{y}_+$ and
$\hat{y}_{-}$ of Proposition \ref{proprietacontinuazione}, respectively, we seek a left-continuous adapted process
$C^{y,\overline{\nu}^{*}}$ and a process of bounded variation $\overline{\nu}^{*}=\overline{\nu}_+^{*}
- \overline{\nu}_{-}^{*} \in \mathcal{S}^y_{t,T}$ such that
\begin{equation}
\label{Skorohodproblem}
\left\{
\begin{array}{ll}
\displaystyle C^{y,\overline{\nu}^{*}}(0) = y,\quad C^{y,\overline{\nu}^{*}}(s)= C^0(s)[y + \overline{\nu}^{*}_{+}(s) - \overline{\nu}^{*}_{-}(s)],
 \quad s \in [0,T-t),\\ \\
\displaystyle \hat{y}_{+}(t+s) \leq C^{y,\overline{\nu}^{*}}(s) \leq \hat{y}_{-}(t+s),\quad \text{a.e.} \,\, s \in [0,T-t], \\ \\
\displaystyle \int_{0}^{T-t}\mathds{1}_{\{C^{y,\overline{\nu}^{*}}(s) < \hat{y}_{-}(t+s)\}}d\overline{\nu}_{-}^{*}(s)
 = 0, \quad\text{and} \quad\displaystyle \int_{0}^{T-t}\mathds{1}_{\{C^{y,\overline{\nu}^{*}}(s) > \hat{y}_{+}(t+s)\}}d\overline{\nu}_{+}^{*}(s)
 = 0 \\ \\
\end{array}
\right.
\end{equation}
hold $\tilde{\mathbb{P}}$-a.s.
Moreover, if $y \in [\hat{y}_{+}(t), \hat{y}_{-}(t)]$ then $\overline{\nu}_+^{*}(\omega,\cdot)$ and
$\overline{\nu}_{-}^{*}(\omega,\cdot)$ are continuous. When $y < \hat{y}_{+}(t)$, then
 $\overline{\nu}_+^{*}(\omega, 0+)= \hat{y}_{+}(t) - y$, $\overline{\nu}_{-}^{*}(\omega, 0+)=0$ and
  $C^{y,\overline{\nu}^{*}}(\omega,0+)=\hat{y}_{+}(t)$; when $y > \hat{y}_{-}(t)$, then
  $\overline{\nu}_{-}^{*}(\omega, 0+)= y - \hat{y}_{-}(t)$, $\overline{\nu}_{+}^{*}(\omega, 0+)=0$ and
  $C^{y,\overline{\nu}^{*}}(\omega,0+)=\hat{y}_{-}(t)$.
\end{problem}

\begin{proposition}
There exists a unique solution of Problem \ref{Skorohodproblemdef} given by
\begin{equation}
\label{explicitformoptimalcontrol}
\left\{
\begin{array}{ll}
\displaystyle C^{y,\overline{\nu}^{*}}(s) = C^0(s)[y + \overline{\nu}^{*}(s)], \\ \\
\displaystyle \overline{\nu}^{*}(s+) = - \max\Big\{\Big[ \big(y - \hat{y}_{-}(t)\big)^{+} \wedge \inf_{u \in
[0,s]}\left(\frac{yC^0(u) - \hat{y}_{+}(t+u)}{C^0(u)}\right)\Big], \vspace{0.25cm} \\
\displaystyle \hspace{2cm} \sup_{r \in [0,s]}\Big[\left(\frac{yC^0(r) - \hat{y}_{-}(t+r)}
{C^0(r)}\right) \wedge \inf_{u \in [r,s]}\left(\frac{yC^0(u) - \hat{y}_{+}(t+u)}{C^0(u)}\right)\Big]
\Big\},
\end{array}
\right.
\end{equation}
for every $s \in [0,T-t)$.
\end{proposition}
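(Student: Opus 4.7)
The natural strategy is to reduce the Skorokhod problem \eqref{Skorohodproblem} to the pathwise double-sided reflection problem studied in \cite{Burdzy}, and then to verify that the explicit formula \eqref{explicitformoptimalcontrol} is exactly Burdzy's representation of the solution, written back in the original variables. To that end, for each $\omega$ I would divide $C^{y,\overline{\nu}}(s)=C^0(s)[y+\overline{\nu}(s)]$ by the a.s.~strictly positive and continuous process $C^0(s)$, setting $Y(s):=y+\overline{\nu}(s)$ and
\begin{equation*}
\underline{\ell}(s):=\frac{\hat{y}_+(t+s)}{C^0(s)},\qquad \overline{\ell}(s):=\frac{\hat{y}_-(t+s)}{C^0(s)}.
\end{equation*}
The Skorokhod problem \eqref{Skorohodproblem} is then equivalent to finding a bounded variation process $\overline{\nu}^*=\overline{\nu}^*_+-\overline{\nu}^*_-$ with $Y^*(s):=y+\overline{\nu}^*(s)\in[\underline{\ell}(s),\overline{\ell}(s)]$ for a.e.\ $s$, with $\overline{\nu}^*_+$ (resp.\ $\overline{\nu}^*_-$) increasing only when $Y^*(s)=\underline{\ell}(s)$ (resp.\ $Y^*(s)=\overline{\ell}(s)$).

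Next I would verify that $(\underline{\ell},\overline{\ell})$ satisfy the hypotheses required in \cite{Burdzy}: both boundaries are $\tilde{\mathbb{P}}$-a.s.\ continuous on $[0,T-t]$ since $\hat{y}_\pm$ are continuous by Theorem \ref{cont-bdry} and $s\mapsto C^0(s)$ is continuous; moreover $\underline{\ell}(s)<\overline{\ell}(s)$ for every $s\in[0,T-t]$ by Proposition \ref{proprietacontinuazione}. Burdzy's pathwise construction then yields, for each fixed $\omega$, a unique process $\overline{\nu}^*(\omega,\cdot)$ of bounded variation solving the reflection problem, together with the explicit min-max formula
\begin{equation*}
-\overline{\nu}^{*}(s+)=\max\bigl\{A(s),\,B(s)\bigr\},
\end{equation*}
where $A(s)$ handles the possible initial jump when $y\notin[\hat{y}_+(t),\hat{y}_-(t)]$ and $B(s)$ is the standard supremum-infimum term that iteratively reflects $Y$ off the two boundaries. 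Matching Burdzy's formula with our variables $\underline{\ell},\overline{\ell}$ and multiplying back numerator and denominator by $C^0$ produces exactly \eqref{explicitformoptimalcontrol}.

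Uniqueness would follow from the pathwise uniqueness statement in \cite{Burdzy}: any two solutions of \eqref{Skorohodproblem} give two solutions of the transformed problem in $[\underline{\ell},\overline{\ell}]$, and so must coincide. Adaptedness of $\overline{\nu}^*$ would follow from the fact that the min-max formula depends only on the trajectory of $C^0$ up to time $s$ and on the deterministic boundaries $\hat{y}_\pm$; hence $\overline{\nu}^*$ is $\mathbb{F}$-adapted. Finally, I would check by inspection of \eqref{explicitformoptimalcontrol} the boundary behaviour at $s=0+$: if $y\in[\hat{y}_+(t),\hat{y}_-(t)]$ both terms inside the outer max vanish at $s=0$, giving continuity of $\overline{\nu}^*_\pm$; if $y>\hat{y}_-(t)$ the term $(y-\hat{y}_-(t))^+$ is activated, producing a downward jump of size $y-\hat{y}_-(t)$ and the initial condition $C^{y,\overline{\nu}^*}(0+)=\hat{y}_-(t)$; the symmetric case $y<\hat{y}_+(t)$ comes from the lower term inside the $\inf_{u\in[0,s]}$, giving an upward jump of size $\hat{y}_+(t)-y$.

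\textbf{Main obstacle.} The bulk of the argument is bookkeeping: matching the explicit formula \eqref{explicitformoptimalcontrol} with Burdzy's abstract representation and carefully tracking the change of variable $Y(s)=C^{y,\overline{\nu}}(s)/C^0(s)$. The real technical point where care is needed is the treatment of the initial jump at $s=0$ (for $y\notin[\hat{y}_+(t),\hat{y}_-(t)]$): one must check that the outer $\max$ in \eqref{explicitformoptimalcontrol} produces precisely the jump prescribed in Problem \ref{Skorohodproblemdef}, which in turn requires using the strict separation of the two boundaries and the continuity of $\hat{y}_\pm$ near $t$ guaranteed by Theorem \ref{cont-bdry}.
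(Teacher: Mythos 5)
Your proposal is correct and follows essentially the same route as the paper: divide through by the positive continuous process $C^0$, check that the transformed moving boundaries stay separated, and invoke \cite{Burdzy} (Corollary $2.4$, Theorem $2.6$ and eqs.\ $(2.6)$--$(2.7)$ therein) for existence, uniqueness and the explicit min-max representation, which after undoing the change of variables is exactly \eqref{explicitformoptimalcontrol}. The only minor difference is that the paper records the \emph{uniform} separation $\inf_{s\in[0,T-t]}\frac{\hat{y}_-(t+s)-\hat{y}_+(t+s)}{C^0(s)}>0$ via Proposition \ref{freeboundaries}, which is the hypothesis actually needed for Burdzy's results, whereas you only state the pointwise strict inequality; on the compact interval $[0,T-t]$ the two are equivalent by continuity of the boundaries and of the paths of $C^0$.
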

\begin{proof}
Take $t \in [0,T]$ and $s \in [0,T-t]$ arbitrary but fixed and set
\begin{equation*}
\left\{
\begin{array}{ll}
\displaystyle \phi(s):=\frac{C^{y,\overline{\nu}^{*}}(s +)}{C^0(s)}, \quad \psi(s):= y, \quad \ell(s):=\frac{\hat{y}_{+}(t+s)}{C^0(s)}, \quad r(s):=\frac{\hat{y}_{-}(t + s)}{C^0(s)},\\ \\
\displaystyle \eta(s)=\eta_{\ell}(s) - \eta_{r}(s):= \overline{\nu}^{*}_{+}(s +) - \overline{\nu}^{*}_{-}(s +). \\ \\
\end{array}
\right.
\end{equation*}
Notice that $\inf_{s \in [0,T-t]}[r(s) - \ell(s)]>0$, by Proposition \ref{freeboundaries}. Hence, we can apply \cite{Burdzy}, Corollary $2.4$ and Theorem $2.6$ to obtain existence and uniqueness of the solution of Problem \ref{Skorohodproblemdef}. Moreover, equations $(2.6)$ and $(2.7)$ in \cite{Burdzy}, give (\ref{explicitformoptimalcontrol}) by \eqref{capacitysolution} above.
%\begin{eqnarray*}
%& & C^{y,\overline{\nu}^{*}}(s+) = yC^0(s) - C^0(s)\max\Big\{\Big[ \big(y - \hat{y}_{-}(t)\big)^{+} \wedge \inf_{u \in
%[0,s]}\left(\frac{yC^0(u) - \hat{y}_{+}(t+u)}{C^0(u)}\right)\Big], \nonumber \\ \nonumber \\
%& & \hspace{2cm} \sup_{r \in [0,s]}\Big[\left(\frac{yC^0(r) - \hat{y}_{-}(t+r)}
%{C^0(r)}\right) \wedge \inf_{u \in [r,s]}\left(\frac{yC^0(u) - \hat{y}_{+}(t+u)}{C^0(u)}\right)\Big]
%\Big\},
%\end{eqnarray*}
%and the second equation in (\ref{explicitformoptimalcontrol}) follows from (\ref{capacitysolution}) since $C^{y,\overline{\nu}^{*}}(s+)=C^0(s)[y + \overline{\nu}^{*}(s+)]$.
\end{proof}

In order to prove that $C^{y,\overline{\nu}^{*}}$ is optimal for the control problem (\ref{optimalproblem}) it is useful to observe that $V_t, V_y, V_{yy}$ belong to $L^{\infty}((0,T) \times (0,K))$, for arbitrary $K > 0$, by Proposition \ref{2valuefunctions}, (\ref{freeb-pr}) and Proposition \ref{propsmoothfit} ($V_y$ and $V_{yy}$ are in fact continuous). Therefore, the value function $V$ of \eqref{optimalproblem} is a weak solution of the HJB equation
\begin{align*}%\label{DDP}
\min\big\{-R+\mu_F V-\mathcal{L}V-V_t\,,\,c_+/f_C-V_y\,,\,V_y-c_-/f_C\big\}=0
\end{align*}
for $(t,y)\in[0,T]\times(0,\infty)$, with $\mathcal{L}$ as in \eqref{freeb-pr}, and
$V(T,y)=G_0+\frac{c_-}{f_C}y$, for $y\in(0,\infty)$ (see also \cite{PhamGuo}, eq.\ (3.6) for a similar framework). Now, recalling that $c_-/f_C<V_y<c_+/f_C$ inside the continuation region $\mathcal{C}$, we can apply It\^o's formula for semi-martingales (cf.\ \cite{Protter}, Theorem $32$, p.\ $79$, among others) in the generalized sense of \cite{BensoussanLions}, Lemma $8.1$ and Theorem $8.5$, pp.\ 183--186, to obtain a verification theorem.
\begin{theorem}
Let $(C^{y,\overline{\nu}^{*}},\overline{\nu}^{*})$ denote the unique solution of Problem \ref{Skorohodproblemdef}. Then $C^{y,\nu^{*}}$ is the optimally controlled production capacity for problem (\ref{optimalproblem}) with $\nu^{*}:=\nu^{*}_+ - \nu^{*}_{-}$ and
\begin{equation*}
\nu^{*}_{+}(s):=\int_{0}^{s}\frac{C^0(u)}{f_C}\,d\overline{\nu}^{*}_{+}(u), \qquad  \nu^{*}_{-}(s):=\int_{0}^s\frac{C^0(u)}{f_C}\,d\overline{\nu}^{*}_{-}(u),\qquad\text{for $s \in [0,T-t)$.}
\end{equation*}
\end{theorem}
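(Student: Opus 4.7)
The plan is a standard verification argument built on the generalized It\^o--Tanaka formula for $V$, combined with the HJB variational inequality and the reflecting nature of $(C^{y,\overline{\nu}^*},\overline{\nu}^*)$ described by Problem \ref{Skorohodproblemdef}. First I would fix $(t,y)\in[0,T)\times(0,\infty)$ and an arbitrary admissible $\nu\in\mathcal{S}^y_{t,T}$. Because the paper has just observed that $V_t,V_y,V_{yy}\in L^\infty_{\mathrm{loc}}$ and $V_y,V_{yy}$ are in fact continuous, the generalized It\^o formula of \cite{BensoussanLions} (Lemma 8.1 and Theorem 8.5) applies to $s\mapsto e^{-\mu_F s}V(t+s,C^{y,\nu}(s))$: splitting $\nu=\nu_+-\nu_-$ and separating the continuous and the pure-jump parts of $\nu_\pm$, one obtains, after standard localization that kills a true martingale term,
\begin{align*}
\tilde{\mathbb{E}}\bigl\{e^{-\mu_F(T-t)}G(C^{y,\nu}(T-t))\bigr\}
=&\,V(t,y)+\tilde{\mathbb{E}}\bigg\{\int_0^{T-t}\!\!e^{-\mu_F s}\bigl(V_t+\mathcal{D}V-\mu_F V\bigr)(t+s,C^{y,\nu}(s))\,ds\bigg\}\\
&+\tilde{\mathbb{E}}\bigg\{\int_{[0,T-t)}\!\!e^{-\mu_F s}f_C\,V_y(t+s,C^{y,\nu}(s))\,d\nu^c(s)\bigg\}\\
&+\tilde{\mathbb{E}}\bigg\{\sum_{s<T-t}e^{-\mu_F s}\bigl[V(t+s,C^{y,\nu}(s+))-V(t+s,C^{y,\nu}(s))\bigr]\bigg\},
\end{align*}
where $\nu^c$ denotes the continuous part of $\nu$.

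Second, I would invoke the HJB variational inequality: $-R+\mu_F V-\mathcal{D}V-V_t\ge 0$ gives a lower bound on the Lebesgue-time integral; $V_y\le c_+/f_C$ controls the continuous investment term and $V_y\ge c_-/f_C$ the continuous disinvestment one; on every jump $C^{y,\nu}(s+)-C^{y,\nu}(s)=f_C\Delta\nu(s)$ the fundamental theorem of calculus combined with the pointwise bounds $c_-/f_C\le V_y\le c_+/f_C$ yields $V(t+s,C^{y,\nu}(s+))-V(t+s,C^{y,\nu}(s))\le c_+\Delta\nu_+(s)-c_-\Delta\nu_-(s)$. Assembling these inequalities and rearranging produces $V(t,y)\ge\mathcal{J}_{t,y}(\nu)$, and taking the supremum over $\nu\in\mathcal{S}^y_{t,T}$ shows $V(t,y)\ge\sup\mathcal{J}_{t,y}$, which together with the converse trivial inequality gives one half of the statement.

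Third, and this is the decisive step, I would check that every inequality above is an equality when $\nu=\nu^*$ constructed from the Skorokhod problem. By Problem \ref{Skorohodproblemdef} the trajectory $C^{y,\overline{\nu}^*}(\cdot)$ lives in $[\hat{y}_+,\hat{y}_-]$ for a.e.\ time; inside the continuation region $\mathcal{C}$ the first equation of \eqref{freeb-pr} gives $V_t+\mathcal{D}V-\mu_F V+R=0$, so the Lebesgue integrand vanishes along the controlled path. The Skorokhod conditions $\int\mathds{1}_{\{C<\hat{y}_-\}}d\overline{\nu}^*_-=0$ and $\int\mathds{1}_{\{C>\hat{y}_+\}}d\overline{\nu}^*_+=0$ say that $d\nu^{*,c}_+$ is supported where $C^{y,\overline{\nu}^*}=\hat{y}_+$ (hence $V_y=c_+/f_C$ by smooth fit, Proposition \ref{propsmoothfit}) and $d\nu^{*,c}_-$ where $C^{y,\overline{\nu}^*}=\hat{y}_-$ (hence $V_y=c_-/f_C$), producing equality in the continuous-control terms. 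The only possible jump is the initial one when $y\notin[\hat{y}_+(t),\hat{y}_-(t)]$: if $y<\hat{y}_+(t)$ then on the stopping region $\mathcal{S}_{+,t}=[0,\hat{y}_+(t)]$ one has $V_y\equiv c_+/f_C$, so $V(t,\hat{y}_+(t))-V(t,y)=(c_+/f_C)\bigl(\hat{y}_+(t)-y\bigr)=c_+\Delta\nu^*_+(0)/f_C$, and symmetrically for $y>\hat{y}_-(t)$; equality therefore holds in the jump term as well, yielding $V(t,y)=\mathcal{J}_{t,y}(\nu^*)$.

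The main obstacle I foresee is purely technical: justifying the generalized It\^o formula up to $s=T-t$ (where $V$ only satisfies $V(T,\cdot)=(c_-/f_C)(\cdot)$), controlling the local martingale via a suitable localizing sequence using the $\tilde{\mathbb{P}}$--integrability estimates of Corollary \ref{corollboundnubarra} and the bounds in the proof of Theorem \ref{existence}, and verifying that the initial jump prescribed by Problem \ref{Skorohodproblemdef} is handled consistently with the left-continuity of $\nu^*$. Uniqueness of the optimizer is already known from the strict concavity argument in the proof of Theorem \ref{existence}, so no further work is needed there.
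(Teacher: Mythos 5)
Your verification argument is exactly the route the paper takes: it invokes the generalized It\^o formula of \cite{BensoussanLions} together with the weak HJB variational inequality, the Skorokhod conditions and the smooth-fit property, precisely as you lay out, and the paper itself leaves these details to the reader. The only slip is notational: the functional $\mathcal{J}_{t,y}$ and the operator $\mathcal{D}$ are defined under the original measure $\mathbb{P}$, so the expectations in your display should be $\mathbb{E}$ rather than $\tilde{\mathbb{E}}$ (or else the generator must be adjusted for the Girsanov drift).
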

As expected (cf.\ \cite{KaratzasWang}, Theorem $3.1$), the optimal time to invest (disinvest) coincides with the first time at which the uncontrolled diffusion hits the moving boundary $\hat{y}_{+}$ ($\hat{y}_{-}$).

%%%%%%%%%%%%%%%%%%%%%%%%%%%%%%%%%%%%%%%%%%%%%%%%%%%%%%%%%%%%%%%%%%%%%%%%%%%%%%%%%%%%%%%%%%%%%%%%%%%%%%%%%%%%%%%%%%%%%%%%%%%%%%%%%%%%%%%%%%%%%%%%%%%%%%%%%%%%%%%%%%%%%%%%%%%%%%%%%%%%%%%%%%%%%%%%%%%%%%%%%%%%%%%%%%%%%%%%%%%%%%%%%%%%%%%%%%%%%%%%%%%%%%%%%%%%%%%%%%%%%%%%%%%%%%%%%%%%%%%%%%%%%%%%%%%%%%%%%%%%%%%%%%%%%%%%%%%%%%%%%%%%%%%%%%%%%%%%%%%%%%%%%%%%%%%%%%%%%%%%%%%%%%%%%%%%%%%%%%%%%%%%%%%%%%%%%%%%%%%%%%%%%%%%%%%%%%%%%%%%%%%%%%%%%%%%%%%%%%%%%%%%%%%%%%%%%%%%%%%%%%%%%%%%%%%%%%%%%%%%%%%%%%%%%%%%%%%%%%%%%%%%%%%%%%%%%%%%%%%%%%%%%%%%%%%%%%%%%%%%%%%%%%%%%%%%%%%%%%%%%%%%%%%%%%%%%%%%%%%%%%%%%%%%%%%%%%%%%%%%%%%%%%%%%%%%%%%%%%%%%%%%%%%%%%%%%%%%%%%%%%%%%%%%%%%%%%%%%%%%%%%%%%%%%%%%%%%%%%%%%%%%%%%%%%%%

%\appendix
\section{Some Proofs for Section \ref{zerosumoptimalstoppinggame}}
\label{appproofs}
\subsection{Proof of Theorem \ref{jointcontinuityv}}
\label{Appcontinuity}

In this Section we show that the value function of the zero-sum optimal stopping game (\ref{defv2}) is continuous on $[0,T] \times (0,\infty)$. First we prove preliminary results and we introduce some new definitions that will be useful in the rest of this Section.
Recall that
\beq
\label{stimaattesoCzero}
\tilde{\mathbb{E}}\left\{\left(\frac{1}{C^0(s)}\right)^{\alpha}\right\} = e^{(\alpha\mu_C - \frac{1}{2}\alpha^2\sigma^2_C)s},\quad\text{for any $\alpha \geq 1$ and $s \in [0,T]$.}
\eeq

\begin{lemma}
\label{stimaconcavitaR}
Under Assumption \ref{AssProfit}, for any $\alpha \geq 1$ one has
\beq\label{basic01}
\tilde{\mathbb{E}}\bigg\{\int_0^{T} R^{\alpha}_c(yC^0(s))ds\bigg\} \leq \kappa\left(1 + \frac{1}{y^{\alpha}}\right),
\eeq
where $\kappa>0$ is a suitable constant independent of $y$.
\end{lemma}
\begin{proof}
Since $R(0)=0$ (cf. Assumption \ref{AssProfit}), for any $y>0$ we have $R_c(y) \leq y^{-1}R(y)$, by concavity of $R$. Also, Inada conditions imply that there exist $\kappa_1>0$ and $\kappa_2 > 0$ such that $R(y) \leq \kappa_1 + \kappa_2 y$ for all $y \in (0,\infty)$.
Hence we have
\begin{eqnarray}
\tilde{\mathbb{E}}\bigg\{\int_0^{T} R^{\alpha}_c(yC^0(s))ds\bigg\}  \leq  \tilde{\mathbb{E}}\bigg\{\int_0^{T} \left(\frac{1}{yC^0(s)}\right)^{\alpha}[\kappa_1 + \kappa_2 yC^0(s)]^{\alpha}\,ds\bigg\} \nonumber
\end{eqnarray}
and \eqref{basic01} easily follows from \eqref{stimaattesoCzero}.
%where $\kappa>0$ is a suitable constant independent of $t$ and $y$ (cf.\ (\ref{stimaattesoCzero})).
\end{proof}

From now on and throughout this Section, we will define $Y^y(s):=yC^0(s)$ (cf.\ (\ref{nubarradefinizione})) under the measure $\tilde{\mathbb{P}}$; also we denote by $\mathcal{L}$ the infinitesimal generator associated to $Y$ as in \eqref{freeb-pr}.
%, i.e. $$(\mathcal{L}f)(y):=\frac{1}{2}\sigma_C^2y^2 f''(y) -\mu_C y f'(y),\qquad f \in C^2_b(\mathbb{R}),$$
%where $C^2_b(\mathbb{R})$ is the space of functions which are twice-continuously differentiable on $(0,\infty)$ and bounded with their first two derivatives.
Inspired by Stroock and Varadhan \cite{StrVar} we adopt the following
\begin{definition}
\label{martingalesense}
Take measurable functions $h:[0,T] \times (0,\infty) \mapsto \mathbb{R}$ and $u: [0,T] \times (0,\infty) \mapsto \mathbb{R}$ such that
$$\tilde{\mathbb{E}}\bigg\{\int_0^s e^{-\bar{\mu}r}|h(t+r,Y^y(r))|\,dr\bigg\} < \infty, \quad \tilde{\mathbb{E}}\Big\{e^{-\bar{\mu}s}|u(t+s,Y^y(s))|\Big\} < \infty, \quad s\geq 0,$$
for any $(t,y) \in [0,T] \times (0,\infty)$ arbitrary but fixed. We say that $u$ solves
$$\left(\partial_t + \mathcal{L} - \bar{\mu}\right)u(t,y) = h(t,y),\qquad (t,y) \in [0,T] \times (0,\infty),$$
in the martingale sense if and only if the process
\begin{align}\label{mart01}
M^{t,y}:=\bigg\{e^{-\bar{\mu}s}u(t+s, Y^y(s)) - \int_0^s e^{-\bar{\mu}r}h(t+r,Y^y(r))dr,\,\,s\geq0\bigg\}
\end{align}
is a $\tilde{\mathbb{P}}$-martingale.
\end{definition}
\begin{remark}\label{remark-mart}
For any adapted, bounded process $Z:=\{Z(s), s\ge0\}$, if $u$ and $h$ are as in Definition \ref{martingalesense} and $M^{t,y}$ of \eqref{mart01} is a $\tilde{\mathbb{P}}$-martingale, then the process
\begin{align*}
N^{t,y}:=\bigg\{e^{-\bar{\mu}s-\int_0^s{Z(t+r)dr}}&u(t+s, Y^y(s))\\
 &\hspace{-10pt}- \int_0^s e^{-\bar{\mu}r-\int_0^r{Z(t+v)dv}}\Big[h(t+r,Y^y(r))+Z(t+r)u(t+r,Y^y(r))\Big]dr,\,\,s\geq0\bigg\}
\end{align*}
is a $\tilde{\mathbb{P}}$-martingale as well (cf.~for instance \cite{Menaldi}, Remark 1.3).
\end{remark}
Denote by $C_b^{\infty}$ the space of functions which are differentiable infinitely many times and which are bounded with all their derivatives. In order to set our problem in a suitable space we define a real valued function $w$ by
\begin{align}\label{dabbliu}
w(y):=\frac{y}{1+y}\qquad y\ge0.
\end{align}
This is a positive, increasing, $C^\infty_b$-function on $[0,+\infty)$ and it is not hard to see that
\begin{align}\label{dabbliu-1}
\tilde{\mathbb{E}}\Big\{\int^T_0{e^{-\rho s}\frac{1}{w\big(Y^y(s)\big)}ds}\Big\}<\frac{1}{\rho}+\frac{1}{y}\left[\frac{1}{\rho+\mu_F+ \frac{1}{2}\sigma^2_C-\bar{\mu}}\right]
\end{align}
for any $\rho>0$ and $\rho \neq \bar{\mu} - \mu_F - \frac{1}{2}\sigma^2_C$, by \eqref{stimaattesoCzero}.
\begin{definition}
\label{spazioBanach}
For $w$ as in \eqref{dabbliu} we write
\beq
\label{wnorm}
||f||_{w,\infty}:= \sup_{(t,y) \in [0,T] \times [0,\infty)}|w(y)f(t,y)|
\eeq
and define
\beq
\label{newspaceBanach}
\mathcal{C}_b^{w}([0,T] \times [0,\infty)):=\{f: f \in C([0,T] \times (0,\infty)) \,\mbox{ and }\, ||f||_{w,\infty} < \infty\}.
\eeq
\end{definition}
\noindent It easily follows that $||\cdot||_{w,\infty}$ is a norm and that $\mathcal{C}_b^{w}([0,T] \times [0,\infty))$ is a Banach space. Now we study a penalized problem.

\begin{proposition}
\label{uepsiloncontinuabounded}
For any given $\epsilon > 0$ there exists a unique $u^\epsilon\in\mathcal{C}^w_b([0,T]\times[0,\infty))$ that solves
\beq
\label{penalized1}
\displaystyle \left(\partial_t + \mathcal{L} - \bar{\mu}\right)u^{\epsilon}(t,y) = -R_c(y) - \frac{1}{\epsilon}\left(\frac{c_{-}}{f_C} - u^{\epsilon}(t,y)\right)^{+} + \frac{1}{\epsilon}\left(u^{\epsilon}(t,y) - \frac{c_{+}}{f_C}\right)^{+}
\eeq
in the martingale sense of Definition \ref{martingalesense} with $u^{\epsilon}(T,y)=c_{-}/f_C$.
\end{proposition}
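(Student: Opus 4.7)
The plan is to recast \eqref{penalized1} as a fixed-point equation and invoke Banach's contraction principle on $\mathcal{C}_b^w([0,T]\times[0,\infty))$. Observe that the two penalisation terms combine neatly: if $P:\mathbb{R}\to[c_-/f_C,c_+/f_C]$ denotes the metric projection onto this interval, then
$$-\tfrac{1}{\epsilon}\big(c_-/f_C-u\big)^++\tfrac{1}{\epsilon}\big(u-c_+/f_C\big)^+=\tfrac{1}{\epsilon}\big(u-P(u)\big),$$
so \eqref{penalized1} is equivalent to $(\partial_t+\mathcal{L}-\bar{\mu}-1/\epsilon)u^\epsilon=-R_c-\tfrac{1}{\epsilon}P(u^\epsilon)$, with terminal datum $c_-/f_C$. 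By Remark \ref{remark-mart} applied with the constant process $Z\equiv 1/\epsilon$, a function $u\in\mathcal{C}_b^w$ solves this equation in the martingale sense of Definition \ref{martingalesense} if and only if it is a fixed point of the operator
$$(\Phi v)(t,y):=\tilde{\mathbb{E}}\bigg\{e^{-(\bar{\mu}+1/\epsilon)(T-t)}\frac{c_-}{f_C}+\int_0^{T-t}e^{-(\bar{\mu}+1/\epsilon)r}\Big[R_c(Y^y(r))+\tfrac{1}{\epsilon}P(v(t+r,Y^y(r)))\Big]dr\bigg\}.$$

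Next I would verify that $\Phi$ maps $\mathcal{C}_b^w$ into itself. Boundedness of $w\cdot\Phi v$ follows from Lemma \ref{stimaconcavitaR} for the $R_c$ contribution (which yields a bound of the form $\kappa(1+1/y)$ that the weight $w(y)=y/(1+y)$ dampens to a constant), from the boundedness of $P$ by $c_+/f_C$, and from \eqref{stimaattesoCzero}. Joint continuity of $(t,y)\mapsto(\Phi v)(t,y)$ is a standard consequence of the continuity of $v$, sample-path continuity of $y\mapsto Y^y(\,\cdot\,)$, and dominated convergence with the above integrable majorant.

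The crucial step is showing that $\Phi$ is a contraction. Since $P$ is $1$-Lipschitz,
$$|\Phi v_1-\Phi v_2|(t,y)\leq \tfrac{1}{\epsilon}\tilde{\mathbb{E}}\bigg\{\int_0^{T-t}e^{-(\bar{\mu}+1/\epsilon)r}|v_1-v_2|(t+r,Y^y(r))\,dr\bigg\}.$$
On $\mathcal{C}_b^w$ I would introduce the equivalent norm $\|f\|_{w,\lambda}:=\sup_{(t,y)}w(y)e^{-\lambda(T-t)}|f(t,y)|$ with $\lambda>0$ a free parameter. Bounding $|v_1-v_2|(t+r,z)\leq\|v_1-v_2\|_{w,\lambda}e^{\lambda(T-t-r)}/w(z)$, estimating $w(y)/w(Y^y(r))\leq 1/C^0(r)+1$, and using $\tilde{\mathbb{E}}\{1/C^0(r)\}=e^{\mu_C r}$ (a direct Girsanov computation), I obtain
$$\|\Phi v_1-\Phi v_2\|_{w,\lambda}\leq\frac{1}{\epsilon}\bigg[\frac{1}{\mu_F+1/\epsilon+\lambda}+\frac{1}{\bar{\mu}+1/\epsilon+\lambda}\bigg]\|v_1-v_2\|_{w,\lambda}.$$
For $\lambda$ sufficiently large the prefactor is $<1$, so $\Phi$ is a contraction in the $\|\cdot\|_{w,\lambda}$-norm, which is equivalent to $\|\cdot\|_{w,\infty}$ for fixed finite $T$. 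Banach's theorem then delivers the unique fixed point $u^\epsilon\in\mathcal{C}_b^w$. Finally, a standard strong-Markov argument applied to the identity $u^\epsilon=\Phi u^\epsilon$ shows that the process \eqref{mart01} built from $u^\epsilon$ and $h=-R_c+\tfrac{1}{\epsilon}(u^\epsilon-P(u^\epsilon))$ is indeed a $\tilde{\mathbb{P}}$-martingale, so $u^\epsilon$ solves \eqref{penalized1} in the martingale sense.

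The main obstacle is the Lipschitz constant $1/\epsilon$ of the penalisation, which precludes a naive contraction in the sup-norm and blows up as $\epsilon\downarrow 0$. The remedy is the rewriting $-\tfrac{1}{\epsilon}(c_-/f_C-u)^++\tfrac{1}{\epsilon}(u-c_+/f_C)^+=\tfrac{1}{\epsilon}(u-P(u))$, which moves a linear $1/\epsilon$ into the discount factor and leaves the bounded, $1$-Lipschitz map $P$ in the source, together with the time-weighted norm whose parameter $\lambda$ absorbs any remaining $1/\epsilon$. A secondary technical care is needed near $y=0$, where $R_c$ is unbounded and the weight $w$ degenerates; this is precisely why the ratio $w(y)/w(Y^y(r))$ must be estimated by $1/C^0(r)+1$ rather than by a constant.
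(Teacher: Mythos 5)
Your proposal is correct and follows the same overall strategy as the paper --- absorb the $1/\epsilon$-Lipschitz singularity of the penalisation into the discount rate and run a Banach fixed-point argument on $\mathcal{C}^w_b([0,T]\times[0,\infty))$ --- but it differs in the two key technical devices. First, your decomposition is the projection identity $-\tfrac{1}{\epsilon}(c_-/f_C-u)^++\tfrac{1}{\epsilon}(u-c_+/f_C)^+=\tfrac{1}{\epsilon}(u-P(u))$, which shifts only $1/\epsilon$ into the discount and leaves the \emph{bounded}, $1$-Lipschitz map $P$ in the source; the paper instead writes the two terms as $\tfrac{1}{\epsilon}u-\tfrac{1}{\epsilon}(\tfrac{c_-}{f_C}\vee u)$ and $\tfrac{1}{\epsilon}u-\tfrac{1}{\epsilon}(\tfrac{c_+}{f_C}\wedge u)$, shifting $2/\epsilon$ into the discount (cf.\ \eqref{penalized2}--\eqref{solpenalmgsense}) and keeping two unbounded $1$-Lipschitz maps. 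Second, to get the contraction you introduce the Bielecki-type norm $\|f\|_{w,\lambda}$ and take $\lambda$ large, whereas the paper works directly in $\|\cdot\|_{w,\infty}$ and computes the contraction constant exactly via \eqref{dabbliu-1}, finding $c_1\vee c_2<1$ because the $2/\epsilon$ discount exactly dominates the total Lipschitz constant $2/\epsilon$ with strict inequality supplied by $\bar{\mu}>0$ (see \eqref{calTepsiloncontaction}). Your route buys a simpler self-mapping bound (since $P$ is bounded by $c_+/f_C$, the analogue of \eqref{boundTepsilong} is immediate) at the cost of an equivalent norm; the paper's buys a norm-free contraction at the cost of a sharper computation. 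Two small points where you are quicker than you should be: (i) joint continuity of $\Phi v$ is not pure dominated convergence --- for the $R_c$-term as $y_1\to y_2$ one needs the monotonicity of $R_c$ (from concavity of $R$) together with Lemma \ref{stimaconcavitaR} and a restriction to $y\ge\delta$, exactly as in the paper's estimate of the term $(II)$; and (ii) the equivalence between fixed points of $\Phi$ and martingale-sense solutions of \eqref{penalized1} requires both directions of the argument around Remark \ref{remark-mart} (the reverse direction, from $u^\epsilon=\Phi u^\epsilon$ to the martingale property of \eqref{mart01}, uses the Markov property of $Y$ and Remark \ref{remark-mart} with $Z\equiv-1/\epsilon$), which you flag but do not carry out --- though the paper is comparably terse on this point.
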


\begin{proof}
Fix $\epsilon > 0$ and note that
\begin{align*}
- \frac{1}{\epsilon}\left(\frac{c_{-}}{f_C} - u^{\epsilon}\right)^{+} = \frac{1}{\epsilon}u^{\epsilon} - \frac{1}{\epsilon}\left(\frac{c_{-}}{f_C} \vee u^{\epsilon}\right)\quad\text{and}\quad
\frac{1}{\epsilon}\left(u^{\epsilon} - \frac{c_{+}}{f_C}\right)^{+} = \frac{1}{\epsilon}u^{\epsilon} - \frac{1}{\epsilon}\left(\frac{c_{+}}{f_C} \wedge u^{\epsilon}\right).
\end{align*}
From Remark \ref{remark-mart}, with $u:=u^\epsilon$, $h:=-R_c - \frac{1}{\epsilon}(\frac{c_{-}}{f_C} - u^{\epsilon})^{+} + \frac{1}{\epsilon}(u^{\epsilon} - \frac{c_{+}}{f_C})^{+}$ and $Z=\frac{1}{\epsilon}$, it follows that \eqref{penalized1} may be rewritten as
\beq
\label{penalized2}
\left\{
\begin{array}{ll}
\displaystyle \left(\partial_t + \mathcal{L} - \left(\bar{\mu}+\frac{2}{\epsilon}\right)\right)u^{\epsilon}(t,y) = -R_c(y) - \frac{1}{\epsilon}\left(\frac{c_{-}}{f_C} \vee u^{\epsilon}(t,y)\right) - \frac{1}{\epsilon}\left(u^{\epsilon}(t,y) \wedge \frac{c_{+}}{f_C}\right) \\ \\
\displaystyle u^{\epsilon}(T,y)=\frac{c_{-}}{f_C},
\end{array}
\right.
\eeq
and the solution of \eqref{penalized2} in the martingale sense (cf.\ Definition \ref{martingalesense}), if it exists, is given by
\begin{eqnarray}
\label{solpenalmgsense}
u^{\epsilon}(t,y) \hspace{-0.25cm}& = &\hspace{-0.25cm} \tilde{\mathbb{E}}\bigg\{\frac{c_{-}}{f_C}e^{-\bar{\mu}(T-t)} + \int_0^{T-t} e^{-(\bar{\mu} + \frac{2}{\epsilon})s}\Big[ R_c(Y^y(s)) + \frac{1}{\epsilon}\left(\frac{c_{-}}{f_C} \vee u^{\epsilon}(t+s,Y^y(s))\right) \nonumber \\
& & \hspace{2cm} + \frac{1}{\epsilon}\left(\frac{c_{+}}{f_C} \wedge u^{\epsilon}(t+s,Y^y(s))\right)\,ds\bigg\}.
\end{eqnarray}
We now show that (\ref{solpenalmgsense}) admits a unique solution in $\mathcal{C}_b^w([0,T]\times [0, \infty))$ by a fixed point argument.
For $g \in \mathcal{C}_b^w([0,T]\times [0, \infty))$ we define the operator $\cal{T}^{\epsilon}$ by
\begin{eqnarray}
\label{calTepsilon}
(\cal{T}^{\epsilon}g)(t,y) \hspace{-0.25cm}& = &\hspace{-0.25cm} \tilde{\mathbb{E}}\bigg\{\frac{c_{-}}{f_C}e^{-\bar{\mu}(T-t)} + \int_0^{T-t} e^{-(\bar{\mu} + \frac{2}{\epsilon})s} \Big[ R_c(Y^y(s)) + \frac{1}{\epsilon}\left(\frac{c_{-}}{f_C} \vee g(t+s,Y^y(s))\right) \nonumber \\
& & \hspace{2cm} + \frac{1}{\epsilon}\left(\frac{c_{+}}{f_C} \wedge g(t+s,Y^y(s))\right)\Big]\,ds\bigg\}
\end{eqnarray}
that maps $\mathcal{C}_b^w([0,T]\times [0, \infty))$ into itself. In order to prove that $(t,y)\mapsto\cal{T}^{\epsilon}g(t,y)$ is indeed continuous, take $(t_1,y_1)$ and $(t_2,y_2)$ in $[0,T] \times (0, \infty)$ (without loss of generality we may take, $t_2 > t_1$ and $y_2 > y_1 > \delta$ for some $\delta>0$) and notice that
\begin{eqnarray*}
|(\cal{T}^{\epsilon}g)(t_1,y_1) - (\cal{T}^{\epsilon}g)(t_2,y_2)| \hspace{-0.25cm} & \leq & \hspace{-0.25cm} |(\cal{T}^{\epsilon}g)(t_1,y_1) - (\cal{T}^{\epsilon}g)(t_2,y_1)| + |(\cal{T}^{\epsilon}g)(t_2,y_1) - (\cal{T}^{\epsilon}g)(t_2,y_2)| \nonumber \\
\hspace{-0.25cm} & =: & \hspace{-0.25cm} (I) + (II).
\end{eqnarray*}
Then, for $(I)$ we have
\begin{align}
\label{stimaI1}
(I) \leq &\, \Big|\frac{c_{-}}{f_C}\left(e^{-\bar{\mu}(T-t_1)} - e^{-\bar{\mu}(T-t_2)}\right)\Big|\nonumber\\
&+\,\Big|\tilde{\mathbb{E}}\bigg\{\int_0^{T-t_1} e^{-(\bar{\mu} + \frac{2}{\epsilon})s} R_c(Y^{y_1}(s))\,ds - \int_0^{T-t_2} e^{-(\bar{\mu} + \frac{2}{\epsilon})s} R_c(Y^{y_1}(s))\,ds\bigg\}\Big| \nonumber \\
& +\, \Big|\tilde{\mathbb{E}}\bigg\{\int_0^{T-t_1} e^{-(\bar{\mu} + \frac{2}{\epsilon})s} \frac{1}{\epsilon}\left(\frac{c_{-}}{f_C} \vee g(t_1+s,Y^{y_1}(s))\right)\,ds \nonumber \\
& \hspace{+3cm} - \int_0^{T-t_2} e^{-(\bar{\mu} + \frac{2}{\epsilon})s} \frac{1}{\epsilon}\left(\frac{c_{-}}{f_C} \vee g(t_2+s,Y^{y_1}(s))\right)\,ds\bigg\}\Big| \\
& +\, \Big|\tilde{\mathbb{E}}\bigg\{\int_0^{T-t_1} e^{-(\bar{\mu} + \frac{2}{\epsilon})s} \frac{1}{\epsilon}\left(\frac{c_{+}}{f_C} \wedge g(t_1+s,Y^{y_1}(s))\right)\,ds \nonumber \\
&\hspace{+3cm} - \int_0^{T-t_2} e^{-(\bar{\mu} + \frac{2}{\epsilon})s} \frac{1}{\epsilon}\left(\frac{c_{+}}{f_C} \wedge g(t_2+s,Y^{y_1}(s))\right)\,ds\bigg\}\Big|. \nonumber
\end{align}
The second term on the right-hand side of (\ref{stimaI1}) converges to zero as $|t_2-t_1|\to0$, by dominated convergence and Lemma \ref{stimaconcavitaR}.
%\begin{align}
%\label{stimaprimopezzoI}
% \Big|\tilde{\mathbb{E}}&\bigg\{\int_0^{T-t_1} e^{-(\bar{\mu} + \frac{2}{\epsilon})s} R_c(Y^{y_1}(s))\,ds - \int_0^{T-t_2} e^{-(\bar{\mu} + \frac{2}{\epsilon})s} R_c(Y^{y_1}(s))\,ds\bigg\}\Big| \nonumber \\
%& = \tilde{\mathbb{E}}\bigg\{\int_{T-t_2}^{T-t_1} e^{-(\bar{\mu} + \frac{2}{\epsilon})s} R_c(Y^{y_1}(s))\,ds\bigg\}.
%\end{align}
%From Lemma \ref{stimaconcavitaR} and dominated convergence we obtain that the right-hand side of (\ref{stimaprimopezzoI}) converges to zero as soon as $t_1 \rightarrow t_2$.
%On the other hand, we shall use dominated convergence to show that the third and the fourth terms on the right-hand side of (\ref{stimaI1}) converge to zero as $t_1 \rightarrow t_2$.

We only analyze the third term on the right-hand side of \eqref{stimaI1} as the same arguments apply to the fourth one. Observe that
\begin{align}
\label{stimaterzopezzoI}
\bigg|\int_0^{T-t_1} &\hspace{-0.3cm}e^{-(\bar{\mu} + \frac{2}{\epsilon})s} \frac{1}{\epsilon}\left(\frac{c_{-}}{f_C} \vee g(t_1+s,Y^{y_1}(s))\right)\,ds-\int_0^{T-t_2}\hspace{-0.3cm} e^{-(\bar{\mu} + \frac{2}{\epsilon})s} \frac{1}{\epsilon}\left(\frac{c_{-}}{f_C} \vee g(t_2+s,Y^{y_1}(s))\right)\,ds\bigg| \nonumber \\
\leq&\, \int_0^T e^{-(\bar{\mu} + \frac{2}{\epsilon})s} \frac{w(Y^{y_1}(s))}{w(Y^{y_1}(s))}\Big[\Big|\frac{c_{-}}{f_C} \vee g(t_1+s,Y^{y_1}(s))\Big|+ \Big|\frac{c_{-}}{f_C} \vee g(t_2+s,Y^{y_1}(s))\Big|\Big]\,ds \nonumber \\
\leq &\,2\,||\frac{c_{-}}{f_C} \vee g||_{w,\infty} \int_0^T e^{-(\bar{\mu} + \frac{2}{\epsilon})s} \frac{1}{w(Y^{y_1}(s))}\,ds.
\end{align}
Using \eqref{dabbliu-1} with $\rho=\bar{\mu}+2/\epsilon$ and recalling that $y_2>y_1>\delta$, one may easily verify that the last expression in \eqref{stimaterzopezzoI} is independent of $t_1$, $t_2$, $y_1$, $y_2$ and it is $\tilde{\mathbb{P}}$-integrable. Therefore, from (\ref{stimaterzopezzoI}) and dominated convergence $\lim_{t_1 \rightarrow t_2}|(\cal{T}^{\epsilon}g)(t_1,y_1) - (\cal{T}^{\epsilon}g)(t_2,y_1)| =0$. One can show that $(II)$ goes to zero as $|y_2-y_1|\to0$ by similar arguments.
%has three terms and it reads
%\begin{align}
%\label{espressioneII}
%(II) \leq &\, \bigg| \tilde{\mathbb{E}}\bigg\{\int_0^{T-t_2} e^{-(\bar{\mu} + \frac{2}{\epsilon})s} \Big[R_c(Y^{y_1}(s))- R_c(Y^{y_2}(s))\Big]\,ds\bigg\} \\
%& +\, \frac{1}{\epsilon} \tilde{\mathbb{E}}\bigg\{\int_{0}^{T-t_2} e^{-(\bar{\mu} + \frac{2}{\epsilon})s}\Big[\left( \frac{c_{-}}{f_C} \vee g(t_2+s,Y^{y_1}(s))\right) - \left(\frac{c_{-}}{f_C} \vee g(t_2+s,Y^{y_2}(s))\right)\Big]\,ds\bigg\} \nonumber \\
%& +\, \frac{1}{\epsilon} \tilde{\mathbb{E}}\bigg\{\int_{0}^{T-t_2} e^{-(\bar{\mu} + \frac{2}{\epsilon})s}\Big[\left( \frac{c_{+}}{f_C} \wedge g(t_2+s,Y^{y_1}(s))\right) - \left(\frac{c_{+}}{f_C} \wedge g(t_2+s,Y^{y_2}(s))\right)\Big]\,ds\bigg\} \bigg|. \nonumber
%\end{align}
%Concavity of $R$ implies that
%$$\lim_{y_1 \rightarrow y_2}\tilde{\mathbb{E}}\bigg\{\int_0^{T-t_2} e^{-(\bar{\mu} + \frac{2}{\epsilon})s} \Big|R_c(Y^{y_1}(s))- R_c(Y^{y_2}(s))\Big|\,ds\bigg\} =0,$$
%by monotone convergence theorem.
%It is not too hard to see that we can use estimates as the ones that led to (\ref{stimaterzopezzoI}) and dominated convergence to show that the second and the third terms in (\ref{espressioneII}) go to zero as $y_1 \rightarrow y_2$.
Since the lower bound $\delta$ on $y_1$ and $y_2$ is arbitrary, we conclude that
$(t,y) \mapsto (\cal{T}^{\epsilon}g)(t,y) \in C([0,T] \times (0,\infty))$ for all $g \in \mathcal{C}_b^w([0,T] \times [0,\infty))$.

Our next step is proving that $||\cal{T}^{\epsilon}g||_{w,\infty} < \infty.$ Consider again $g \in \mathcal{C}_b^w([0,T] \times [0,\infty))$ and notice that
\begin{align}
\label{boundTepsilong}
|w(y)(\cal{T}^{\epsilon}g)(t,y)|\leq &\, w(y)\,\tilde{\mathbb{E}}\bigg\{\int_0^{T-t} e^{-(\bar{\mu} + \frac{2}{\epsilon})s} R_c(Y^{y}(s))\,ds\bigg\} + w(y)\frac{c_{-}}{f_C} \\
&+\,\frac{1}{\epsilon}w(y)\tilde{\mathbb{E}}\bigg\{\hspace{-0.1cm}\int_0^{T-t}\hspace{-0.3cm} e^{-(\bar{\mu} + \frac{2}{\epsilon})s}\frac{w(Y^y(s))}{w(Y^y(s))} \Big[\Big( \frac{c_{-}}{f_C} \vee g\Big)+ \Big( \frac{c_{+}}{f_C} \wedge g\Big)\Big](t+s,Y^{y}(s))\,ds\bigg\} \nonumber \\
\leq &\, \kappa w(y)\Big(1 + \frac{1}{y}\Big) + w(y)\frac{c_{-}}{f_C} \nonumber\\
&+\, \frac{w(y)}{\epsilon}\,\tilde{\mathbb{E}}\bigg\{\int_0^T e^{-(\bar{\mu} + \frac{2}{\epsilon})s} \frac{ds}{w(Y^y(s))}\bigg\}\Big[ ||\frac{c_{-}}{f_C} \vee g||_{w,\infty} + ||\frac{c_{-}}{f_C} \wedge g||_{w,\infty}\Big], \nonumber
\end{align}
where we have used Lemma \ref{stimaconcavitaR} to find the first term in the last expression above and the same arguments as in (\ref{stimaterzopezzoI}) for the third one. Finally, recalling \eqref{dabbliu-1} and taking the supremum for $(t,y) \in [0,T] \times [0,\infty)$ we conclude that $\big\|\cal{T}^{\epsilon}g\big\|_{w,\infty}<\infty$.

To complete the proof we have now to show that $\cal{T}^{\epsilon}$ is a contraction. Take $g_1,g_2 \in \mathcal{C}_b^w([0,T]\times [0, \infty))$. Then, arguments as those employed to obtain (\ref{boundTepsilong}) and \eqref{dabbliu-1} with $\rho=\bar{\mu}+2/\epsilon$ give
\begin{align}
\label{calTepsiloncontaction}
|w(y)(\cal{T}^{\epsilon}g_1 - \cal{T}^{\epsilon}g_2)(t,y)| \le \, w(y)\,||g_1 - g_2||_{w,\infty}\left[\frac{1}{y}\left(\frac{1}{1 + (\mu_F + \frac{1}{2}\sigma^2_C)\frac{\epsilon}{2}}\right) + \frac{1}{1 + \frac{\bar{\mu}\epsilon}{2}}\right]. \nonumber
\end{align}
Set $c_1:=1/(1 + (\mu_F + \frac{1}{2}\sigma^2_C)\frac{\epsilon}{2})$ and $c_2:=1/(1 + \frac{\bar{\mu}\epsilon}{2})$. Then, $w(y)\Big[\frac{c_1}{y}+ c_2\Big]\le c_1\vee c_2<1$ and $\cal{T}^{\epsilon}$ is a contraction. Hence, there exists a unique solution of the penalized problem (\ref{penalized1}) in $\mathcal{C}_b^w([0,T]\times [0, \infty))$, by Banach fixed point theorem.
\end{proof}

From Definition \ref{martingalesense} and Proposition \ref{uepsiloncontinuabounded} it follows
\begin{corollary}\label{continuousmg}
For any $(t,y)\in[0,T]\times(0,\infty)$ the process $H^{t,y}:=\{H^{t,y}(s),\,\,s \geq 0\}$ defined by
\begin{align}
H^{t,y}(s):=&e^{-\bar{\mu}s}u^{\epsilon}(t+s, Y^y(s))\\
& + \int_0^s e^{-\bar{\mu}r} \Big[ R_c(Y^y(r))+ \frac{1}{\epsilon}\Big(\frac{c_{-}}{f_C} - u^{\epsilon}(t+r,Y^y(r))\Big)^+ - \frac{1}{\epsilon}\Big(u^{\epsilon}(t+r,Y^y(r))-\frac{c_{+}}{f_C}\Big)^+\Big]\,dr\nonumber
\end{align}
is a continuous $\tilde{\mathbb{P}}$-martingale.
\end{corollary}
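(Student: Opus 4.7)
The statement has two assertions: (i) $H^{t,y}$ is a $\tilde{\mathbb{P}}$-martingale, and (ii) $H^{t,y}$ has continuous sample paths.

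For (i), the martingale property is essentially a direct consequence of Proposition \ref{uepsiloncontinuabounded} together with Definition \ref{martingalesense}. Indeed, by Proposition \ref{uepsiloncontinuabounded}, $u^{\epsilon}\in\mathcal{C}^w_b([0,T]\times[0,\infty))$ solves \eqref{penalized1} in the martingale sense, which by Definition \ref{martingalesense} means exactly that the process obtained by taking $u:=u^\epsilon$ and $h(t,y):=-R_c(y) - \frac{1}{\epsilon}\big(\frac{c_-}{f_C} - u^\epsilon(t,y)\big)^+ + \frac{1}{\epsilon}\big(u^\epsilon(t,y) - \frac{c_+}{f_C}\big)^+$ in \eqref{mart01} is a $\tilde{\mathbb{P}}$-martingale. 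A sign check shows that this process is precisely $H^{t,y}$. The only prerequisite is that the integrability conditions in Definition \ref{martingalesense} hold for this choice of $u$ and $h$, which I would verify by combining the bound $\|u^\epsilon\|_{w,\infty}<\infty$ (hence $|u^\epsilon(t,y)|\le C(1+1/y)$), Lemma \ref{stimaconcavitaR} (for the $R_c$ term), and the estimate \eqref{dabbliu-1} (for the $1/w(Y^y)$ contribution). All these pieces are already in place.

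For (ii), continuity follows from pathwise considerations. First, $s\mapsto Y^y(s)=yC^0(s)$ has continuous paths $\tilde{\mathbb{P}}$-a.s.\ by \eqref{change}, being a geometric Brownian motion. Second, $u^\epsilon\in C([0,T]\times(0,\infty))$ by Proposition \ref{uepsiloncontinuabounded}, so $s\mapsto e^{-\bar{\mu}s}u^\epsilon(t+s,Y^y(s))$ is continuous by composition. Third, the integral term
\[
s\mapsto\int_0^s e^{-\bar{\mu}r}\Big[R_c(Y^y(r))+\tfrac{1}{\epsilon}\big(\tfrac{c_-}{f_C}-u^\epsilon(t+r,Y^y(r))\big)^+ -\tfrac{1}{\epsilon}\big(u^\epsilon(t+r,Y^y(r))-\tfrac{c_+}{f_C}\big)^+\Big]\,dr
\]
is (absolutely) continuous in $s$ on $[0,T-t]$, because the integrand is $\tilde{\mathbb{P}}$-a.s.\ locally integrable in $r$ by the same estimates used in (i).

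I do not foresee a real obstacle here: the lemma is a packaging statement that isolates, for later use (e.g.\ in passing to the limit $\epsilon\downarrow 0$), the two facts already proved, namely the construction of $u^\epsilon$ via the fixed-point argument and its continuity. The only point requiring a moment of care is to make sure the sign conventions in \eqref{penalized1} and Definition \ref{martingalesense} match, so that $h$ appears with the correct sign inside $H^{t,y}$; a direct comparison between \eqref{mart01} and the displayed expression for $H^{t,y}(s)$ settles this immediately.
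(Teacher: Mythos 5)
Your proposal is correct and matches the paper exactly: the paper offers no separate proof, stating the corollary as an immediate consequence of Definition \ref{martingalesense} and Proposition \ref{uepsiloncontinuabounded}, which is precisely your argument (with the sign check on $h$ and the pathwise continuity observation spelled out). Nothing is missing.
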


\begin{proposition}
\label{uespsiloncontrolspropo}
Define
$\mathcal{A}:=\{\nu: \Omega \times [0,T] \mapsto [0,1], \text{adapted\,}\}.$
Then, the solution of the penalized problem (\ref{penalized1}) may be written as
\beq
\label{uepsiloncontrols}
u^{\epsilon}(t,y)=\sup_{\nu_1 \in \mathcal{A}}\inf_{\nu_2 \in \mathcal{A}}\Xi^{\epsilon}(t,y;\nu_1,\nu_2) = \inf_{\nu_2 \in \mathcal{A}}\sup_{\nu_1 \in \mathcal{A}}\Xi^{\epsilon}(t,y;\nu_1,\nu_2),
\eeq
where
\begin{eqnarray}
\label{csi}
\Xi^{\epsilon}(t,y;\nu_1,\nu_2) \hspace{-0.25cm}& := &\hspace{-0.25cm} \tilde{\mathbb{E}}\bigg\{\int_0^{T-t} e^{-\bar{\mu}r - \frac{1}{\epsilon}\int_0^{r}(\nu_1(\alpha) + \nu_2(\alpha))d\alpha}\Big[R_c(Y^y(r)) + \frac{1}{\epsilon}\nu_1(r)\frac{c_{-}}{f_C} + \frac{1}{\epsilon}\nu_2(r)\frac{c_{+}}{f_C}\Big]\,dr \nonumber \\
& & \hspace{1cm} + \frac{c_{-}}{f_C}e^{-\bar{\mu}(T-t) - \frac{1}{\epsilon}\int_0^{T-t}(\nu_1(\alpha) + \nu_2(\alpha))d\alpha}\bigg\}.
\end{eqnarray}
\end{proposition}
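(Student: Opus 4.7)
The plan is to exploit the observation that equation \eqref{penalized1} is a Bellman--Isaacs equation for a zero-sum game in which the players control intensities $\nu_1,\nu_2\in[0,1]$. Using the elementary identities $(c_-/f_C - u)^+=\sup_{\nu_1\in[0,1]}\nu_1(c_-/f_C - u)$ and $(u-c_+/f_C)^+=\sup_{\nu_2\in[0,1]}\nu_2(u-c_+/f_C)$, the nonlinear right-hand side of \eqref{penalized1} can be rewritten as the separable Hamiltonian
\[
H(u) := R_c + \tfrac{1}{\epsilon}(c_-/f_C - u)^+ - \tfrac{1}{\epsilon}(u-c_+/f_C)^+ = \sup_{\nu_1\in[0,1]}\inf_{\nu_2\in[0,1]}\Big\{R_c + \tfrac{\nu_1}{\epsilon}(c_-/f_C - u) + \tfrac{\nu_2}{\epsilon}(c_+/f_C - u)\Big\},
\]
and by separability the $\sup$ and $\inf$ commute (Isaacs' condition holds trivially). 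This strongly suggests that $u^\epsilon$ is both the upper and the lower value of the game associated to $\Xi^\epsilon$.

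Next, for arbitrary $\nu_1,\nu_2\in\mathcal{A}$, apply the shifted martingale identity of Remark \ref{remark-mart} and Corollary \ref{continuousmg} to $u^\epsilon$ with the bounded weight $Z:=(\nu_1+\nu_2)/\epsilon\in[0,2/\epsilon]$. Taking expectations at $s=T-t$ and using the terminal condition $u^\epsilon(T,\cdot)=c_-/f_C$ yields a representation of the form
\[
u^\epsilon(t,y) = \tilde{\mathbb{E}}\Big\{\tfrac{c_-}{f_C}e^{-\bar\mu(T-t)-\int_0^{T-t}Z\,dr} + \int_0^{T-t} e^{-\bar\mu r-\int_0^r Z\,dv}\bigl[H(u^\epsilon) + Z\,u^\epsilon\bigr](t+r,Y^y(r))\,dr\Big\}.
\]
Subtracting $\Xi^\epsilon(t,y;\nu_1,\nu_2)$ and using the algebraic identity $\tfrac{\nu_1+\nu_2}{\epsilon}u - \tfrac{\nu_1 c_-}{\epsilon f_C} - \tfrac{\nu_2 c_+}{\epsilon f_C} = -\tfrac{\nu_1}{\epsilon}(c_-/f_C - u) + \tfrac{\nu_2}{\epsilon}(u-c_+/f_C)$, the difference reduces to
\[
u^\epsilon(t,y) - \Xi^\epsilon(t,y;\nu_1,\nu_2) = \tilde{\mathbb{E}}\Big\{\int_0^{T-t} e^{-\bar\mu r-\int_0^r Z\,dv}\,\tfrac{1}{\epsilon}\bigl[A_1(r) - A_2(r)\bigr]dr\Big\},
\]
where $A_1 := (c_-/f_C - u^\epsilon)^+ - \nu_1(c_-/f_C - u^\epsilon)\geq 0$ and $A_2 := (u^\epsilon - c_+/f_C)^+ - \nu_2(u^\epsilon - c_+/f_C)\geq 0$ for any admissible controls.

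To conclude, I would introduce the feedback strategies $\bar\nu_1^\ast(s):=\mathds{1}_{\{u^\epsilon(t+s,Y^y(s))<c_-/f_C\}}$ and $\bar\nu_2^\ast(s):=\mathds{1}_{\{u^\epsilon(t+s,Y^y(s))>c_+/f_C\}}$; continuity of $u^\epsilon$ from Proposition \ref{uepsiloncontinuabounded} together with continuity of $s\mapsto Y^y(s)$ ensures that both lie in $\mathcal{A}$. Choosing $\nu_1\equiv\bar\nu_1^\ast$ annihilates $A_1$ pathwise, whence $u^\epsilon(t,y)\leq \Xi^\epsilon(t,y;\bar\nu_1^\ast,\nu_2)$ for every $\nu_2\in\mathcal{A}$, so $u^\epsilon\leq\sup_{\nu_1}\inf_{\nu_2}\Xi^\epsilon$. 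Symmetrically, $\nu_2\equiv\bar\nu_2^\ast$ annihilates $A_2$ and yields $u^\epsilon\geq\inf_{\nu_2}\sup_{\nu_1}\Xi^\epsilon$. Combining these with the universal min--max inequality $\sup_{\nu_1}\inf_{\nu_2}\Xi^\epsilon\leq\inf_{\nu_2}\sup_{\nu_1}\Xi^\epsilon$ forces all three quantities to coincide with $u^\epsilon(t,y)$.

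The only obstacle is purely administrative: justifying the applicability of the martingale identity and the finiteness of $\Xi^\epsilon$. Both follow at once because the auxiliary weight $Z\geq 0$ only strengthens the discount, the payoffs $c_\pm/f_C$ are constant, and the $R_c$-integrability is controlled by Lemma \ref{stimaconcavitaR}; the progressive measurability of the feedback indicators is immediate from the continuity of $u^\epsilon$ established in Proposition \ref{uepsiloncontinuabounded}.
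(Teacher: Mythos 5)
Your proposal is correct and follows essentially the same route as the paper: the same shifted martingale representation with weight $Z=(\nu_1+\nu_2)/\epsilon$ evaluated at $s=T-t$, the same pointwise inequalities (your nonnegative terms $A_1,A_2$ are just the paper's inequalities \eqref{disuguale1}--\eqref{disuguale2} rearranged), and the same bang-bang feedback strategies $\nu_1^*=\mathds{1}_{\{u^\epsilon\le c_-/f_C\}}$, $\nu_2^*=\mathds{1}_{\{u^\epsilon\ge c_+/f_C\}}$ closing the min--max gap. The opening discussion of the Isaacs condition is motivational rather than load-bearing, but the argument itself is sound.
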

\begin{proof}
For any $\nu_1, \nu_2 \in \mathcal{A}$ and $s\le T-t$, we may write
\begin{eqnarray}
\label{alls}
u^{\epsilon}(t,y) & \hspace{-0.25cm} = \hspace{-0.25cm} &\tilde{\mathbb{E}}\bigg\{e^{-\bar{\mu}s - \frac{1}{\epsilon}\int_0^{s}(\nu_1(\alpha) + \nu_2(\alpha))d\alpha} u^{\epsilon}(t+s,Y^y(s)) \\
&\hspace{-0.25cm} + \hspace{-0.25cm}& \int_0^s e^{-\bar{\mu}r - \frac{1}{\epsilon}\int_0^{r}(\nu_1(\alpha) + \nu_2(\alpha))d\alpha}\Big[R_c(Y^y(r)) + \frac{1}{\epsilon}\left(\frac{c_{-}}{f_C} - u^{\epsilon}(t+r,Y^y(r))\right)^{+} \nonumber \\
&\hspace{-0.25cm} - \hspace{-0.25cm}& \frac{1}{\epsilon}\left(u^{\epsilon}(t+r,Y^y(r)) - \frac{c_{+}}{f_C}\right)^{+} + \frac{1}{\epsilon}(\nu_1(r) + \nu_2(r))u^{\epsilon}(t+r, Y^y(r))\Big]\,dr\bigg\},\nonumber
\end{eqnarray}
by Corollary \ref{continuousmg} and Remark \ref{remark-mart}, with $u:=u^\epsilon$, $h:=-R_c-\frac{1}{\epsilon}\big(\frac{c_-}{f_C}-u^\epsilon\big)^++\frac{1}{\epsilon}\big(u^\epsilon-\frac{c_+}{f_C}\big)^+$ and $Z(s):=\frac{1}{\epsilon}\big[\nu_1(s)+\nu_2(s)\big]$. Taking now $s=T-t$ in \eqref{alls},
\beq
%\label{nu2star}
\begin{array}{cc}
\nu^{*}_1 :=
\left\{
\begin{array}{ll}
1 \quad \text{on }\,\,\{u^{\epsilon} \leq \frac{c_{-}}{f_C}\} \\ \\
0 \quad \text{on }\,\,\{u^{\epsilon} > \frac{c_{-}}{f_C}\}
\end{array}
\right.
\hspace{+10pt}&\text{and}\hspace{+20pt}
\nu^{*}_2 :=
\left\{
\begin{array}{ll}
1 \quad \text{on }\,\,\{u^{\epsilon} \geq \frac{c_+}{f_C}\} \\ \\
0 \quad \text{on }\,\,\{u^{\epsilon} < \frac{c_+}{f_C}\}
\end{array}
\right.
\end{array}
\eeq
and following \cite{Menaldi}, Section 2.1, we easily find
\begin{align*}
u^{\epsilon}(t,y) \leq \Xi^{\epsilon}(t,y;\nu^{*}_1,\nu_2) \quad\text{and}\quad u^{\epsilon}(t,y) \geq \Xi^{\epsilon}(t,y;\nu_1,\nu^{*}_2) \quad \textrm{for all $\nu_1,\nu_2 \in \mathcal{A}$}.
\end{align*}
Then \eqref{uepsiloncontrols} follows.
\end{proof}

Since $\Xi^{\epsilon}(t,y;\nu_1,\nu_2) \geq 0$ for all $\nu_1, \nu_2 \in \mathcal{A}$ (cf.\ \eqref{csi}), then $u^\epsilon(t,y)\geq 0$ for all $(t,y)\in[0,T]\times(0,\infty)$ by \eqref{uepsiloncontrols}.
%\begin{align}\label{falce}
%\end{align}

\begin{proposition}
\label{convergenzapartipositive}
One has
\begin{align}\label{convpp}
\lim_{\epsilon \downarrow 0}\Big|\Big|\left(u^{\epsilon} - \frac{c_+}{f_C}\right)^{+}\Big|\Big|_{w,\infty} =0\quad\text{and}\quad
\lim_{\epsilon \downarrow 0}\Big|\Big|\left(\frac{c_-}{f_C} - u^{\epsilon}\right)^{+}\Big|\Big|_{w,\infty} =0.
\end{align}
\end{proposition}
\begin{proof}
For any $\nu_1,\nu_2 \in \mathcal{A}$ we may write
\begin{eqnarray}
\label{integrazioniparte}
\frac{c_+}{f_C} &\hspace{-0.25cm} = \hspace{-0.25cm}& \frac{c_+}{f_C}e^{-\bar{\mu}(T-t) - \frac{1}{\epsilon}\int_0^{T-t}(\nu_1(\alpha) + \nu_2(\alpha))d\alpha} \nonumber \\
& \hspace{-0.25cm} + \hspace{-0.25cm} & \int_0^{T-t}e^{-\bar{\mu}r - \frac{1}{\epsilon}\int_0^{r}(\nu_1(\alpha) + \nu_2(\alpha))d\alpha}\frac{c_+}{f_C}\left(\bar{\mu} + \frac{1}{\epsilon}[\nu_1(r) + \nu_2(r)]\right)\,dr,
\end{eqnarray}
by an integration by parts. Then from (\ref{uepsiloncontrols}) and (\ref{csi}) it follows
\begin{eqnarray}
\label{uepsiloncontrolssimplified}
u^{\epsilon}(t,y) - \frac{c_+}{f_C} &\hspace{-0.25cm} \leq \hspace{-0.25cm}& \inf_{\nu_2 \in \mathcal{A}}\sup_{\nu_1 \in \mathcal{A}}\tilde{\mathbb{E}}\bigg\{\int_0^{T-t} e^{-\bar{\mu}r - \frac{1}{\epsilon}\int_0^{r}(\nu_1(\alpha) + \nu_2(\alpha))d\alpha}\Big[R_c(Y^y(r)) - \bar{\mu}\frac{c_+}{f_C}\Big]\,dr\bigg\} \nonumber \\
&\hspace{-0.25cm} \leq \hspace{-0.25cm}& \tilde{\mathbb{E}}\bigg\{\int_0^{T-t} e^{-\bar{\mu}r - \frac{1}{\epsilon}r} R_c(Y^y(r))\,dr\bigg\} \leq \left[\frac{\epsilon}{2(1 + \bar{\mu}\epsilon)}\right]^{\frac{1}{2}}\left[\kappa\left( 1 + \frac{1}{y^2}\right)\right]^{\frac{1}{2}}, \nonumber
\end{eqnarray}
where the third expression follows by H\"older inequality and Lemma \ref{stimaconcavitaR}. Similarly,
\begin{eqnarray}
\label{uepsiloncontrolssimplified2}
u^{\epsilon}(t,y) - \frac{c_{-}}{f_C} &\hspace{-0.25cm} \geq \hspace{-0.25cm}& \inf_{\nu_2 \in \mathcal{A}}\sup_{\nu_1 \in \mathcal{A}}\tilde{\mathbb{E}}\bigg\{\int_0^{T-t} e^{-\bar{\mu}r - \frac{1}{\epsilon}\int_0^{r}(\nu_1(\alpha) + \nu_2(\alpha))d\alpha}\Big[R_c(Y^y(r)) - \bar{\mu}\frac{c_{-}}{f_C}\Big]\,dr\bigg\} \nonumber \\
&\hspace{-0.25cm} \geq \hspace{-0.25cm}& -\left[\frac{\epsilon}{2(1+\bar{\mu}\epsilon)}\right]^{\frac{1}{2}}\left[\kappa\left(1+\frac{1}{y^2}\right)
\right]^{\frac{1}{2}}-\frac{\bar{\mu}c_-}{f_C}\left[\frac{\epsilon}{1+\bar{\mu}\epsilon}\right].
\end{eqnarray}
Hence \eqref{convpp} follows from Definition \ref{spazioBanach}.
\end{proof}

Before proving Theorem \ref{jointcontinuityv} we shall make further observations on $u^{\epsilon}$.
Take $\sigma$ and $\tau$ arbitrary stopping times in $[0,T-t]$. From Corollary \ref{continuousmg}, with $s$ replaced by $\sigma\wedge\tau$, we find
\begin{align}
\label{uepsilondamgproperty}
u^{\epsilon}(t,y)  = \mathbb{E}&\bigg\{e^{-\bar{\mu}(\tau \wedge \sigma)}u^{\epsilon}(t + \tau \wedge \sigma, Y^y(\tau \wedge \sigma)) \\
& + \hspace{-4pt}\int_{0}^{\tau \wedge \sigma}\hspace{-6pt}e^{-\bar{\mu}r}\Big[R_c(\cdot) + \frac{1}{\epsilon}\left(\frac{c_{-}}{f_C} - u^{\epsilon}(\cdot, \cdot)\right)^{+} - \frac{1}{\epsilon}\left(u^{\epsilon}(\cdot, \cdot) -\frac{c_{+}}{f_C}\right)^{+} \Big](t+r,Y^y(r))\,dr\bigg\}.\nonumber
\end{align}
Now, recalling that $u^\epsilon(T,y)=\frac{c_-}{f_C}$ and noting that $u^{\epsilon} \leq \frac{c_+}{f_C} + (u^{\epsilon} - \frac{c_+}{f_C})^{+}$ and $u^{\epsilon} \geq \frac{c_{-}}{f_C} - (\frac{c_{-}}{f_C} - u^{\epsilon})^{+}$, we have
\begin{eqnarray}\label{uepsmin}
u^{\epsilon}(t,y) & \hspace{-0.25cm} \leq \hspace{-0.25cm}&\mathbb{E}\bigg\{e^{-\bar{\mu}\tau}u^{\epsilon}(t + \tau, Y^y(\tau))\mathds{1}_{\{\tau < \sigma\}} + \frac{c_+}{f_C}e^{-\bar{\mu}\sigma}\mathds{1}_{\{\sigma \leq \tau\}}\mathds{1}_{\{\sigma < T-t\}} \nonumber \\
&\hspace{-0.25cm} + \hspace{-0.25cm}& e^{-\bar{\mu}\sigma}\left(u^{\epsilon}(t+\sigma, Y^y(\sigma))-\frac{c_+}{f_C}\right)^{+}\mathds{1}_{\{\sigma \leq \tau\}}\mathds{1}_{\{\sigma< T-t\}} + \frac{c_{-}}{f_C}e^{-\bar{\mu}(T-t)}\mathds{1}_{\{\tau=\sigma=T-t\}} \nonumber \\
&\hspace{-0.25cm} + \hspace{-0.25cm}& \int_{0}^{\tau \wedge \sigma}e^{-\bar{\mu}s}\Big[R_c(Y^y(r)) + \frac{1}{\epsilon}\left(\frac{c_{-}}{f_C} - u^{\epsilon}(t+r, Y^y(r))\right)^{+} \Big]\,dr\bigg\}
\end{eqnarray}
and
\begin{eqnarray}\label{uepsmaj}
u^{\epsilon}(t,y) &\hspace{-0.25cm} \geq \hspace{-0.25cm}& \mathbb{E}\bigg\{e^{-\bar{\mu}\sigma}u^{\epsilon}(t + \sigma, Y^y(\sigma))\mathds{1}_{\{\sigma \leq \tau\}}\mathds{1}_{\{\sigma < T-t\}} + \frac{c_{-}}{f_C}e^{-\bar{\mu}\tau}\mathds{1}_{\{\tau < \sigma\}}\nonumber \\
&\hspace{-0.25cm} - \hspace{-0.25cm}& e^{-\bar{\mu}\tau}\left(\frac{c_{-}}{f_C} - u^{\epsilon}(t+\tau, Y^y(\tau))\right)^{+}\mathds{1}_{\{\tau < \sigma\}} + \frac{c_{-}}{f_C}e^{-\bar{\mu}(T-t)}\mathds{1}_{\{\tau=\sigma=T-t\}} \nonumber \\
&\hspace{-0.25cm} + \hspace{-0.25cm}& \int_{0}^{\tau \wedge \sigma}e^{-\bar{\mu}r}\Big[R_c(Y^y(r)) - \frac{1}{\epsilon}\left(u^{\epsilon}(t+r, Y^y(r)) - \frac{c_{+}}{f_C}\right)^{+} \Big]\,dr\bigg\}.
\end{eqnarray}

We are now able to prove Theorem \ref{jointcontinuityv}.
\begin{proof}[Proof of Theorem \ref{jointcontinuityv}]
From \eqref{defv2} and \eqref{uepsmin} we find
\begin{eqnarray*}
\label{prima}
& & u^{\epsilon}(t,y) - v(t,y) \leq\inf_{\tau\in [0,T-t]}\sup_{\sigma\in [0,T-t]}\tilde{\mathbb{E}}\bigg\{e^{-\bar{\mu}\tau}\Big(u^\epsilon(t+\tau,Y^y(\tau))-\frac{c_-}{f_C}\Big)\mathds{1}_{\{\tau<\sigma\}}\nonumber \\
& & +\,e^{-\bar{\mu}\sigma}\Big(u^\epsilon(t+\sigma,Y^y(\sigma))-\frac{c_+}{f_C}\Big)^+\mathds{1}_{\{\sigma\le\tau\}}\mathds{1}_{\{\sigma<T-t\}}
+\frac{1}{\epsilon}\int^{\sigma\wedge\tau}_0{e^{-\bar{\mu}r}\Big(\frac{c_-}{f_C}-u^\epsilon(t+r,Y^y(r))\Big)^+dr}\bigg\}. \nonumber
\end{eqnarray*}
Take $\tau=\tau^\epsilon:=\inf\big\{s\in[0,T-t)\,:\,u^\epsilon(t+s,Y^y(s))\le c_-/f_C\big\}\wedge(T-t)$ in the equation above to obtain
\begin{eqnarray*}
\label{prima2}
& & u^{\epsilon}(t,y) - v(t,y) \leq \sup_{\sigma\in [0,T-t]}\tilde{\mathbb{E}}\bigg\{e^{-\bar{\mu}\sigma}\Big(u^\epsilon(t+\sigma,Y^y(\sigma))-\frac{c_+}{f_C}\Big)^+\bigg\}\nonumber\\
& & \le \sup_{\sigma\in [0,T-t]}\tilde{\mathbb{E}}\bigg\{\frac{1}{w\big(Y^y(\sigma)\big)}\bigg\}\Big\|\Big(u^\epsilon-\frac{c_+}{f_C}\Big)^+\Big\|_{w,\infty}
\le \left[1+\frac{1}{y}\tilde{\mathbb{E}}\bigg\{\sup_{0\le s\le T-t}\frac{1}{C^0(s)}\bigg\}\right]\Big\|\Big(u^\epsilon-\frac{c_+}{f_C}\Big)^+\Big\|_{w,\infty}. \nonumber
\end{eqnarray*}
Arguing in a similar way and using \eqref{uepsmaj} we also obtain
\begin{align}\label{seconda2}
\hspace{-20pt}
u^{\epsilon}(t,y) - v(t,y) \geq&- \left[1+\frac{1}{y}\tilde{\mathbb{E}}\bigg\{\sup_{0\le s\le T-t}\frac{1}{C^0(s)}\bigg\}\right]\Big\|\Big(\frac{c_-}{f_C}-u^\epsilon\Big)^+\Big\|_{w,\infty}.
\end{align}
Therefore (cf.\ Definition \ref{spazioBanach})
\begin{align}\label{terza}
\big\|u^\epsilon-v\big\|_{w,\infty}\le\kappa\Big(\Big\|\Big(u^\epsilon-\frac{c_+}{f_C}\Big)^+\Big\|_{w,\infty}+\Big\|\Big(\frac{c_-}{f_C}-u^\epsilon\Big)^+\Big\|_{w,\infty}\Big)
\end{align}
for a suitable constant $\kappa>0$ depending only on $\hat{\sigma}_C$, $\hat{\mu}_C$ and $T>0$. Now, the right-hand side of \eqref{terza} goes to zero as $\epsilon\to0$ and $w\,v\in C([0,T]\times[0,\infty))$, thus implying $v\in C([0,T]\times(0,\infty))$.
\end{proof}
\begin{remark}\label{rem-unifconv}
Note that for any $\delta>0$, one has $\|u^\epsilon-v\|_{w,\infty}\ge \delta/(1+\delta)\sup_{[0,T]\times[\delta,\infty)}|u^\epsilon-v|(t,y)$ and hence $u^\epsilon\to v$ uniformly on $[0,T]\times[\delta,\infty)$ as $\epsilon\to0$.
\end{remark}

\subsection{Proof of Theorem \ref{thmsaddlepointsv}}
\label{saddlepointsvproof}

For $\epsilon>0$ set
\beq
\label{approxstopping}
\left\{
\begin{array}{ll}
\tau^{\epsilon}(t,y):=\inf\{s \in [0,T-t): u^{\epsilon}(t+s, Y^y(s)) \leq \frac{c_{-}}{f_C} \} \wedge (T-t), \\ \\
\sigma^{\epsilon}(t,y):=\inf\{s \in [0,T-t): u^{\epsilon}(t+s, Y^y(s)) \geq \frac{c_{+}}{f_C} \} \wedge (T-t).
\end{array}
\right.
\eeq
Take $\delta>0$ arbitrary but fixed and define the first exit time of $Y$ from the half-plane $(\delta,\infty)$ by $\tau_\delta(y):=\inf\{s\ge0:\, Y^y(s)\le \delta\}$. Note that for all $y>0$, one finds
\begin{align}\label{tdelta}
\tau_\delta(y)\uparrow\infty\quad\textrm{as $\delta\downarrow 0$, $\tilde{\mathbb{P}}$-a.s.}
\end{align}
as $\{0\}$ is a non-attainable boundary point for the process $Y$. For simplicity we set $\tau^{\epsilon}\equiv\tau^{\epsilon}(t,y)$, $\sigma^{\epsilon}\equiv\sigma^{\epsilon}(t,y)$ and $\tau_\delta\equiv\tau_\delta(y)$.

From Remark \ref{rem-unifconv} $u^\epsilon\to v$ uniformly on $[0,T]\times[\delta,\infty)$ as $\epsilon\downarrow 0$. Then, following the same arguments as in the proof of \cite{CDA}, Lemma $6.2$, we find that
$$\lim_{\epsilon\to0}\,\tau^*\wedge\tau^\epsilon\wedge\tau_\delta=\tau^*\wedge\tau_\delta \quad \mbox{ and } \quad \lim_{\epsilon\to0}\,\sigma^*\wedge\sigma^\epsilon\wedge\tau_\delta=\sigma^*\wedge\tau_\delta,\quad\text{$\tilde{\mathbb{P}}$-a.s.}$$
for all $(t,y)\in[0,T]\times (0,\infty)$ and with $\tau^*$ and $\sigma^*$ as in \eqref{stoppingtimesv}. Therefore, we also have
\begin{align}\label{lim-st}
\lim_{\epsilon\to\infty}\,\sigma^{*}\wedge\sigma^\epsilon\wedge \tau^{*}\wedge\tau^\epsilon\wedge\tau_\delta
=\sigma^{*}\wedge \tau^{*}\wedge\tau_\delta\qquad
\textrm{$\tilde{\mathbb{P}}$-a.s.,}
\end{align}
for all $(t,y)\in[0,T]\times (0,\infty)$.

Again, to simplify notation we set $\rho_{\delta,\epsilon}:=\sigma^{*}\wedge\sigma^\epsilon\wedge \tau^{*} \wedge\tau^\epsilon \wedge \tau_\delta$ and we obtain
\begin{align}\label{ueps01}
u^{\epsilon}(t,y) = \tilde{\mathbb{E}}\bigg\{e^{-\bar{\mu}\rho_{\delta,\epsilon}}u^{\epsilon}(t + \rho_{\delta,\epsilon}, Y^y(\rho_{\delta,\epsilon})) + \int_{0}^{\rho_{\delta,\epsilon}}e^{-\bar{\mu}s} R_c(Y^y(s))\,ds\bigg\},
\end{align}
by (\ref{uepsilondamgproperty}).
Taking limits in \eqref{ueps01} first as $\epsilon\to 0$ and then as $\delta\to0$, the left-hand side converges to $v$ by uniform convergence. For the right-hand side we employ dominated convergence, Remark \ref{rem-unifconv}, \eqref{lim-st} and continuity of $v$ when taking $\epsilon\to0$; whence, when $\delta\to0$ we employ monotone convergence and \eqref{tdelta} for the integral term, and dominated convergence, \eqref{tdelta} and continuity of $v$ for the other one. We thus obtain
%\begin{align}\label{ueps03}
$v(t,y) = \Psi(t,y;\sigma^*,\tau^*)$ (cf.~\eqref{Jey}).
%\tilde{\mathbb{E}}\bigg\{e^{-\bar{\mu}\sigma^{*}\wedge \tau^{*}}v(t + \sigma^{*}\wedge \tau^{*}, Y^y(\sigma^{*}\wedge \tau^{*})) + \int_{0}^{\sigma^{*}\wedge \tau^{*}}e^{-\bar{\mu}s} R_c(Y^y(s))\,ds\bigg\},
%\end{align}

Note that
\begin{align}\label{v00}
&e^{-\bar{\mu}\sigma^{*}\wedge \tau^{*}}v(t + \sigma^{*}\wedge \tau^{*}, Y^y(\sigma^{*}\wedge \tau^{*}))\nonumber\\
&=e^{-\bar{\mu}\tau^*}\frac{c_{-}}{f_C}\mathds{1}_{\{\tau^* < \sigma^*\}} + e^{-\bar{\mu}\sigma^*}\frac{c_{+}}{f_C}\mathds{1}_{\{\sigma^* \leq \tau^*\}}\mathds{1}_{\{\sigma^* < T-t\}} + e^{-\bar{\mu}(T-t)}\frac{c_{-}}{f_C}\mathds{1}_{\{\sigma^* = \tau^* = T-t\}}\qquad\textrm{$\tilde{\mathbb{P}}$-a.s.}
\end{align}
and therefore
\begin{align}\label{v01}
v(t,y) = \tilde{\mathbb{E}}&\bigg\{e^{-\bar{\mu}\tau^*}\frac{c_{-}}{f_C}\mathds{1}_{\{\tau^* < \sigma^*\}} + e^{-\bar{\mu}\sigma^*}\frac{c_{+}}{f_C}\mathds{1}_{\{\sigma^* \leq \tau^*\}}\mathds{1}_{\{\sigma^* < T-t\}} + e^{-\bar{\mu}(T-t)}\frac{c_{-}}{f_C}\mathds{1}_{\{\sigma^* = \tau^* = T-t\}} \nonumber \\
&\hspace{5pt} + \int_{0}^{\tau^* \wedge \sigma^*}e^{-\bar{\mu}s} R_c(Y^y(r))\,dr\bigg\}.
\end{align}

It remains now to show that $(\tau^*,\sigma^*)$ is indeed a saddle point for the functional $\Psi$ of \eqref{Jey}. Take an arbitrary stopping time $\sigma\in[0,T-t]$, define $\tau_{\delta,\epsilon}:=\tau^*\wedge\tau^\epsilon\wedge\tau_\delta$ and replace $\tau \wedge \sigma$ in \eqref{uepsilondamgproperty} by $\sigma\wedge\tau_{\delta,\epsilon}$. It gives
\begin{align}\label{semarm01}
u^{\epsilon}(t,y) & \leq \tilde{\mathbb{E}}\bigg\{e^{-\bar{\mu}(\sigma\wedge\tau_{\delta,\epsilon})}u^{\epsilon}(t + \sigma\wedge\tau_{\delta,\epsilon} , Y^y(\sigma\wedge\tau_{\delta,\epsilon})) + \int_{0}^{\sigma\wedge\tau_{\delta,\epsilon}}e^{-\bar{\mu}s} R_c(Y^y(s))\,ds\bigg\}.
\end{align}
First we let $\epsilon$ go to zero and then take limits as $\delta\downarrow0$; using arguments as in \eqref{ueps01} %--\eqref{ueps03}
we obtain
\begin{align}\label{semarm02}
v(t,y) \leq \tilde{\mathbb{E}}\bigg\{e^{-\bar{\mu}(\sigma \wedge \tau^*)}v(t + \sigma \wedge \tau^*, Y^y(\sigma \wedge \tau^*)) + \int_{0}^{\sigma \wedge \tau^*}e^{-\bar{\mu}s} R_c(Y^y(s))\,ds\bigg\}.
\end{align}
From \eqref{stoppingtimesv}, \eqref{v00} and the fact that $v \leq \frac{c_{+}}{f_C}$ we find $v(t,y)\le\Psi(t,y;\sigma,\tau^*)$.
%\begin{align}\label{semarm03}
%v(t,y)\leq \tilde{\mathbb{E}}&\bigg\{e^{-\bar{\mu}\tau^*}\frac{c_{-}}{f_C}\mathds{1}_{\{\tau^* < \sigma\}} + e^{-\bar{\mu}\sigma}\frac{c_{+}}{f_C}\mathds{1}_{\{\sigma\leq \tau^*\}}\mathds{1}_{\{\sigma < T-t\}} + e^{-\bar{\mu}(T-t)}\frac{c_{-}}{f_C}\mathds{1}_{\{\sigma = \tau^* = T-t\}} \nonumber\\
%&\hspace{+5pt}+ \int_{0}^{\tau^* \wedge \sigma}e^{-\bar{\mu}s} R_c(Y^y(s))\,ds\bigg\}.
%\end{align}
Analogously, take an arbitrary stopping time $\tau\in[0,T-t]$, define $\sigma_{\delta,\epsilon}:=\sigma^*\wedge\sigma^\epsilon\wedge\tau_\delta$ and set $\tau \wedge \sigma:= \tau\wedge\sigma_{\delta,\epsilon}$ in \eqref{uepsilondamgproperty}. Same arguments as in \eqref{semarm01} and \eqref{semarm02} give
\begin{align}\label{semarm04}
v(t,y) \geq \tilde{\mathbb{E}}\bigg\{e^{-\bar{\mu}(\sigma^* \wedge \tau)}v(t + \sigma^* \wedge \tau, Y^y(\sigma^* \wedge \tau)) + \int_{0}^{\sigma^* \wedge \tau}e^{-\bar{\mu}s} R_c(Y^y(s))\,ds\bigg\},
\end{align}
and hence $v(t,y)\ge\Psi(t,y;\sigma^*,\tau)$
%\begin{align}\label{semarm05}
%v(t,y)\geq \tilde{\mathbb{E}}&\bigg\{e^{-\bar{\mu}\tau}\frac{c_{-}}{f_C}\mathds{1}_{\{\tau < \sigma^*\}} + e^{-\bar{\mu}\sigma^*}\frac{c_{+}}{f_C}\mathds{1}_{\{\sigma^*\leq \tau\}}\mathds{1}_{\{\sigma^* < T-t\}} + e^{-\bar{\mu}(T-t)}\frac{c_{-}}{f_C}\mathds{1}_{\{\sigma^* = \tau = T-t\}} \nonumber\\
%&\hspace{+5pt}+ \int_{0}^{\tau \wedge \sigma^*}e^{-\bar{\mu}s} R_c(Y^y(s))\,ds\bigg\},
%\end{align}
by \eqref{stoppingtimesv} and the bound $v \geq \frac{c_{-}}{f_C}$.

\subsection{Complements to the proof of Lemma \ref{cciuno}}\label{cciappendix}

In this section we will prove \eqref{cciloro05}. Full details are provided only for the integral involving $\frac{\partial K_1}{\partial y}$ as the cases of $\frac{\partial K_i}{\partial y}$, $i=2,3$ follow by straightforward generalization. Recall $p_C(y,s;z)$ as in \eqref{cciloro02-3}, $t_o$, $\delta$ and $\epsilon_o$ as in the proof of Lemma \ref{cciuno}, then
\begin{equation}
\label{deriv01}
\frac{\partial K_1}{\partial y}(y;s,\alpha_+(t_o+s),\alpha_-(t_o+s))
 = \int^{\alpha_-(t_o+s)}_{\alpha_+(t_o+s)}{R_c(z)\frac{\big[\ln\big(z/y\big)-\hat{\mu}_Cs
\big]}{\sigma^2_C\,s\,y}p_C(y,s;z)dz}, \nonumber
\end{equation}
for $(s,y)\in[\delta,T-t_o-\delta]\times[\epsilon_o,+\infty)$. We take modulus of the previous equation and use H\"older's inequality to obtain
\begin{align}\label{deriv02}
\hspace{-20pt}&\Big|\frac{\partial K_1}{\partial y}(y;s,\alpha_+(t_o\hspace{-2pt}+\hspace{-2pt}s),\alpha_-(t_o\hspace{-2pt}+\hspace{-2pt}s))\Big|\nonumber\\
%\hspace{-4pt} %\left(\hspace{-2pt}\int^{\alpha_-(t_o+s)}_{\alpha_+(t_o+s)}{\hspace{-6pt}R^2_c(z)p_C(y,s;z)dz}\hspace{-2pt}\right)^{\frac{1}{2}}\hspace{-6pt}  \left(\hspace{-2pt}\int^{\alpha_-(t_o+s)}_{\alpha_+(t_o+s)}{\hspace{-4pt}\frac{\big[\ln\big(z/y\big)\hspace{-2pt}-\hspace{-2pt}\hat{\mu}_Cs
%\big]^2}{\sigma^4_C\,s^2\,y^2}p_C(y,s;z)dz}\hspace{-2pt}\right)^{\frac{1}{2}}\\
%& =
&\leq\hspace{-2pt}\frac{1}{\sigma_C\,s\,y}\tilde{\mathbb{E}}\Big\{R^2_c\big(yC^{0}(s)\big)\mathds{1}_{\{\alpha_+(t_o+s)<yC^{0}(s)<\alpha_-(t_o+s)\}}\Big\}^{\frac{1}{2}}
%\nonumber\\
\tilde{\mathbb{E}}\Big\{\big(\tilde{W}(s)\big)^2
\mathds{1}_{\{\alpha_+(t_o+s)<yC^{0}(s)<\alpha_-(t_o+s)\}}\Big\}^{\frac{1}{2}}\nonumber\\
&\le 1/(\sqrt{s}\,\sigma_C\,y)\tilde{\mathbb{E}}\Big\{R^2_c\big(yC^{0}(s)\big)\Big\}^{\frac{1}{2}},
\end{align}
by \eqref{nubarradefinizione} and \eqref{Girsanov}. Now from \eqref{deriv02} and calculations as in the proof of Lemma \ref{stimaconcavitaR} it follows that
\begin{align*}%\label{deriv03}
\int^{\delta}_0&{e^{-\bar{\mu}s}\Big|\frac{\partial K_1}{\partial y}(y;s,\alpha_+(t_o\hspace{-2pt}+\hspace{-2pt}s),\alpha_-(t_o\hspace{-2pt}+\hspace{-2pt}s))\Big|ds}\hspace{-2pt}\le \hspace{-2pt}\gamma\sqrt{\delta}\Big(1+\frac{1}{y}\Big)\\
\int_{T-t_o-\delta}^{T-t_o}&{\hspace{-2pt}e^{-\bar{\mu}s}\Big|\frac{\partial K_1}{\partial y}(y;s,\alpha_+(t_o\hspace{-2pt}+\hspace{-2pt}s),\alpha_-(t_o\hspace{-2pt}+\hspace{-2pt}s))\Big|ds}\hspace{-2pt}\le\hspace{-2pt} \gamma\Big(\hspace{-2pt}\sqrt{T\hspace{-2pt}-\hspace{-2pt}t_o}-\sqrt{{T\hspace{-2pt}-\hspace{-2pt}t_o\hspace{-2pt}-\hspace{-2pt}\delta}}\hspace{-2pt}\Big)\Big(1+\frac{1}{y}\Big)
\end{align*}
for a suitable constant $\gamma>0$ and hence \eqref{cciloro05} holds.

%%%%%%%%%%%%%%%%%%%%%%%%%%%%%%%%%%%%%%%%%%%%%%%%%%%%%%%%%%%%%%%%%%%%%%%%%%%%%%%%%%%%%%%%%%%%%%%%%%%%%%%%%%%%%%%%%%%%%%%%%%%%%%%%%%%%%%%%%%%%%%%%%%%%%%%%%%%%%%%%%%%%%%%%%%%%%%%%%%%%%%%%%%%%%%%%%%%%%%%%%%%%%%%%%%%%%%%%%%%%%%%%%%%%%%%%%%%%%%%%%%%%%%%%%%%%%%%%%%%%%%%%%%%%%%%%%%%%%%%%%%%%%%%%%%%%%%%%%%%%%%%%%%%%%%%%%%%%%%%%%%%%%%%%%%%%%%%%%%%%%%%%%%%%%%%%%%%%%%%%%%%%%%%%%%%%%%%%%%%%%%%%%%%%%%%%%%%%%%%%%%%%%%%%%%%%%%%%%%%%%%%%%%%%%%%%%%%%%%%%%%%%%%%%%%%%%%%%%%%%%%%%%%%%%%%%%%%%%%%%%%%%%%%%%%%%%%%%%%%%%%%%%%%%%%%%%%%%%%%%%%%%%%%%%%%%%%%%%%%%%%%%%%%%%%%%%%%%%%%%%%%%%%%%%%%%%%%%%%%%%%%%%%%%%%%%%%%%%%%%%%%%%%%%%%%%%%%%%%%%%%%%%%%%%%%%%%%%%%%%%%%%%%%%%%%%%%%%%%%%%%%%%%%%%%%%%%%%%%%%%%%%%%%%%
\vspace{+8pt}
\ackn{The first author was supported by EPSRC grant EP/K00557X/1; Financial support by the German Research Foundation (DFG) via grant Ri--1128--4--1 is gratefully acknowledged by the second author. This paper was completed when the authors were visiting the Hausdorff Research Institute for Mathematics (HIM) at the University of Bonn in the framework of the Trimester Program ``Stochastic Dynamics in Economics and Finance''. We thank HIM for the hospitality. We wish also to thank the associate editor and two anonymous referees for their pertinent and useful comments and J.~Moriarty, G.~Peskir and F.~Riedel for many useful discussions}
%%%%%%%%%%%%%%%%%%%%%%%%%%%%%%%%%%%%%%%%%%%%%%%%%%%%%%%%%%%%%%%%%%%%%%%%%%%%%%%%%%%%%%%%%%%%%%%%%%%%%%%%%%%%%%%%%%%%%%%%%%%%%%%%%%%%%%

\section*{References}

\end{document}